\newtheorem{theorem}{Theorem}[section]
\newtheorem{lemma}[theorem]{Lemma}
\newtheorem{corollary}[theorem]{Corollary}
\theoremstyle{definition}
\newtheorem{definition}[theorem]{Definition}
\begin{document}

\title{Undecidable problems for propositional \\ calculi with implication}

\author{Grigoriy V. Bokov\\
        \small Department of Mathematical Theory of Intelligent Systems\\
        \small Lomonosov Moscow State University\\
        \small Moscow, Russian Federation\\
        \small E-mail: bokov@intsys.msu.ru}

\maketitle

\begin{abstract}
In this article, we deal with propositional calculi over a signature containing the classical implication $\to$ with the rules of modus ponens and substitution. For these calculi we consider few recognizing problems such as recognizing derivations, extensions, completeness, and axiomatizations. The main result of this paper is to prove that the problem of recognizing extensions is undecidable for every propositional calculus, and the problems of recognizing axiomatizations and completeness are undecidable for propositional calculi containing the formula $x \to ( y \to x )$. As a corollary, the problem of derivability of a fixed formula $A$ is also undecidable for all $A$. Moreover, we give a historical survey of related results.
\end{abstract}

\maketitle

\section{Introduction}

In 1946, Tarski~\cite{Sinaceur:2000} proposed to consider decision problems for a propositional calculus, which is defined as a finite set of propositional formulas over some signature with a finite set of rules of inference. Many important and interesting problems arise for these calculi. For example, \emph{recognizing axiomatizations}, i.e., whether a given finite set of formulas constitutes (axiomatizes) an adequate axiom system for a propositional calculus, \emph{recognizing extensions}, i.e., whether a given finite set of formulas derives all theorems of propositional calculus, \emph{recognizing completeness}, i.e., whether a given finite set of theorems of propositional calculus constitutes an adequate axiom system for this calculus, and \emph{recognizing derivations}, i.e., whether a given formula derives from a propositional calculus. In this paper we consider only propositional calculi with the rules of modus ponens and substitution.

The first undecidable problem for propositional calculi was found by Linial and Post in 1949~\cite{LinialPost:49}. They proved the undecidability of recognizing completeness for the classical propositional calculus over the signature $\{ \neg, \vee \}$. Note that Linial and Post gave sketch of proof, the full proof of their result was restored later by Davis~\cite[pp.~137--142]{Davis:58} and Yntema~\cite{Yntema:64}. The Linial and Post theorem is an example of the first undecidable propositional calculus, i.e., the problem of recognizing derivations is undecidable for this calculus. As a corollary of this result, the problems of recognizing axiomatizations and extensions are also undecidable for the classical propositional calculus.

In 1963, Kuznetsov~\cite{Kuznetsov:63} proved the Linial and Post theorem for the intuitionistic calculus over the signature $\left\{ \neg, \vee, \&, \to \right\}$. Moreover, he obtained a much stronger result, that the problem of recognizing completeness, as well as the problems of recognizing axiomatisations and extensions, is undecidable not only for the intuitionistic, but also for every superintuitionistic propositional calculus, i.e., a finitely axiomatizable extension of the intuitionistic calculus. Particularly, the Kuznetsov theorem implies that the intuitionistic propositional calculus contains undecidable propositional calculi.

Several constructions of undecidable propositional calculi have been obtained. Singletary in 1964~\cite{Singletary:64:CPP} constructed an undecidable propositional calculus over the signature $\left\{ \neg, \to \right\}$. In 1965, Gladstone~\cite{Gladstone:65} and independently Ihrig~\cite{Ihrig:65:PLT} constructed propositional calculi for which the problem of recognizing derivations of formulas is of any required recursively enumerable degree of unsolvability. Note that Gladstone obtained the same result for every signature in which the implication is expressed. A much stronger result was obtained by Singletary in 1968~\cite{Singletary:68:RRA}. He constructed a pure implicational undecidable propositional calculus, i.e., calculus over the signature $\{\to\}$ whose axioms are derived from the axiom $x \to ( y \to x )$.

Kuznetsov noticed in~\cite{Kuznetsov:63} that A. A. Markov (Jr.) in 1961 proposed to consider the same class of recognizing problems for the implicational propositional calculus. In this way, Harrop in 1964~\cite{Harrop:64:RPPC} proved that the problem of recognizing completeness, as well as the problems of recognizing axiomatisations and extensions, is undecidable for every propositional calculus containing the formulas $x \to ( y \to x )$ and $x \to x$. Independently, in 1972 Bollman and Tapia~\cite{BollmanTapia:72:RUP} by using Singletary constructions~\cite{Singletary:68:RRA} proved the undecidability of the problem of recognizing extensions for the pure implicational fragment of the intuitionistic propositional calculus, i.e., the calculus with the following two axioms
\begin{equation*}
  \begin{array}{l}
    x \to (y \to x), \hbox{ and} \\
    (x \to (y \to z)) \to ((x \to y) \to (x \to z)).
  \end{array}
\end{equation*}
In 1994, Marcinkowski~\cite{Marcinkowski:94} obtained a much stronger result: fix an implicational propositional tautology $A$ that is not of the form $B \to B$ for some formula $B$, then the problem of recognizing extensions is undecidable for propositional calculus with the single axiom $A$. If we combine this with the Tarski result~\cite[p.~59]{Tarski:83}, we obtain that the problem of recognizing extensions is undecidable for every finitely axiomatizable extension of the propositional calculus with axioms $x \to (y \to x)$ and $x \to ( y \to ( ( x \to ( y \to z ) ) \to z ) )$.

Some recent observations of related results were given in 2014 by Zolin~\cite{Zolin:2013} and Bokov~\cite{Bokov:2015:URA}. Besides, an interesting observation was found by Chvalovsk\'{y}. He noted that the Linial and Post theorem for finitely represented superintuitionistic logics easily follows from Marcinkowski's construction in~\cite{Marcinkowski:94}.

The aim of this paper is to prove that the problem of recognizing extensions is undecidable for all propositional calculus and to show that a derivation of the formula $x \to ( y \to x )$ is sufficient for the undecidability of the problems of recognizing axiomatizations and completeness, i.e., every propositional calculus containing the formula $x \to ( y \to x )$ has undecidable problems of recognizing axiomatizations and completeness. As a corollary, the problem, whether a fixed formula $A$ is derivable from a given finite set of formulas by the rules of modus ponens and substitution, is also undecidable for all formula $A$, not only of the form $B \to B$ in contrast with the Marcinkowski result. Moreover, we consider a general methods of proving that a recognizing problem of propositional calculi is undecidable, and give a historical survey of related results.

This paper is organized as follows. In the next section we introduce the basic terminology and notation, give a historical survey of known results, and state our main result. Section 3 is devoted to a reduction of undecidable problems for propositional calculi. In the first part of this section we give a historical survey of methods to prove the undecidability of a recognizing problems for propositional calculi, describe a general method and illustrate it by examples. Next, we recall what a tag system is and formally reduce the halting problem of tag systems to the derivation problem of propositional calculi. In Section 4 we prove our results. Finally, in Section 5 we give some concluding remarks and discuss further researches

\section{Preliminaries and results}

We begin with some notation. Let us consider the language consisting of an infinite set of propositional variables $\mathcal{V}$ and the signature $\Sigma$, i.e., a finite set of connectives. Letters $x, y, z, u, p$, etc., are used to denote propositional variables. Usually connectives are binary or unary such as $\neg$, $\vee$, $\wedge$, or $\to$.

\emph{Propositional formulas} or $\Sigma$-\emph{formulas} are built up from the signature $\Sigma$ and propositional variables $\mathcal{V}$ in the usual way. For example, the following notations
\begin{equation*}
  x, \quad \neg A, \quad (A \vee B), \quad (A \wedge B), \quad (A \to B)
\end{equation*}
are formulas over the signature $\{\neg,\ \vee,\ \wedge,\ \to\}$. Capital letters $A, B, C$, etc., are used to denote propositional formulas. Throughout the paper, we will omit the outermost parentheses in formulas and parentheses assuming the customary priority of connectives.

Let $\Sigma$ be a signature containing the binary connective of implication $\to$. By a \emph{propositional calculus} or a $\Sigma$-\emph{calculus} we mean a finite set $P$ of $\Sigma$-formulas referred to as \emph{axioms} together with two rules of inference:

1) \emph{modus ponens}
\begin{equation*}
  A,~A \to B~\vdash~B;
\end{equation*}

2) \emph{substitution}
\begin{equation*}
  A~\vdash~\sigma A,
\end{equation*}
where $\sigma A$ is the substitution instance of $A$, i.e., the result of applying the substitution $\sigma$ to the formula $A$.

Denote by $[P]$ the set of derivable (or provable) formulas of a calculus $P$. A \emph{derivation} in $P$ is defined from the axioms and the rules of inference in the usual way. The statement that a formula $A$ is derivable from $P$ is denoted by $P \vdash A$.

Let us introduce the following pre-order relation on the set of all propositional calculus. We write $P_1 \leq P_2$ (or, equivalently, $P_2 \geq P_1$) if each derivable formula of $P_1$ is also derivable from $P_2$, i.e., if $[P_1] \subseteq [P_2]$. We write $P_1 \sim P_2$ and say that two calculi $P_1$ and $P_2$ are \emph{equivalent} if $[P_1] = [P_2]$. Finally, we write $P_1 < P_2$ if $[P_1] \subsetneq [P_2]$.

Now we formally define the problems of \emph{recognizing derivations} (\textbf{Drv}), \emph{extensions} (\textbf{Ext}), \emph{axiomatizations} (\textbf{Axm}), and \emph{completeness} (\textbf{Cmpl}) for a fixed $\Sigma$-calculus $P_0$:

\smallskip \noindent
\begin{center}
\begin{tabular}{ll}
  (\textbf{Drv}) & \emph{given a calculus $P$, determine whether $P_0 \geq P$;} \\
  (\textbf{Ext}) & \emph{given a calculus $P$, determine whether $P_0 \leq P$;} \\
  (\textbf{Axm}) & \emph{given a calculus $P$, determine whether $P_0 \sim P$;} \\
  (\textbf{Cmpl}) & \emph{given a calculus $P$ such that $P \leq P_0$, determine whether $P_0 \leq P$.}
\end{tabular}
\end{center}
\smallskip

\noindent
Denote by $\mathbf{Cl}_{\Sigma}$ the classical propositional calculus over a signature $\Sigma$, and by $\mathbf{Int}_{\Sigma}$ the intuitionistic propositional calculus over a signature $\Sigma$~\cite{Kleene:2002}.

The previous results can be summarized as follows.

\begin{theorem}[Linial and Post, 1949]
\textbf{Axm}, \textbf{Ext}, and \textbf{Cmpl} are undecidable for $\mathbf{Cl}_{\{\neg, \vee\}}$.
\end{theorem}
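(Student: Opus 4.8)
The plan is to reduce an undecidable problem about a combinatorial model of computation — a tag system, whose halting problem is known to be undecidable — to the relevant recognizing problems. The heart of the argument is an effective encoding of tag-system configurations as propositional formulas over $\{\neg, \vee\}$ and of a single computation step as a derivation step in a suitable calculus. Since the paper announces a uniform treatment via tag systems in Section 3, I would first set up the encoding: each word $w$ over the tag-system alphabet becomes a formula $\lceil w \rceil$ (built, say, by nesting the available connectives so that distinct words yield non-equivalent formulas), and the finitely many production rules of the tag system become finitely many formulas that, together with the classical axioms, force $[\,]$ to contain exactly the encodings of configurations reachable from a start word. The key lemma to isolate is a \emph{faithfulness} statement: the tag system halts on input $w$ if and only if a distinguished ``halting'' formula $H$ becomes derivable in the calculus $P_w$ obtained by adjoining $\lceil w \rceil$ (and the rule-encoding formulas) to a fixed finite base.

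Next I would leverage that faithfulness lemma to attack the three problems simultaneously. For \textbf{Drv} for a \emph{fixed} target calculus, one arranges that $H$ is derivable precisely when the machine halts, so $\mathbf{Cl}_{\{\neg,\vee\}} \geq P_w$ (or the relevant containment) fails exactly on halting inputs; combined with decidability of classical tautologihood this yields an undecidable instance of the recognizing question. For \textbf{Ext} and \textbf{Axm}, the construction must be arranged so that adding a candidate set of formulas either does or does not recover all theorems of $\mathbf{Cl}_{\{\neg,\vee\}}$ according to whether the simulated computation halts — concretely, by making a ``completeness trigger'' formula (one whose addition collapses the calculus to classical logic, using post-completeness of $\mathbf{Cl}$) derivable exactly on halting runs. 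For \textbf{Cmpl} one additionally checks that the constructed candidate calculus is always a sub-calculus of $\mathbf{Cl}_{\{\neg,\vee\}}$, i.e.\ consists only of classical tautologies, so that the promise ``$P \leq P_0$'' in the definition of \textbf{Cmpl} is met; this is where the encoding formulas must be chosen to be themselves tautologies, which constrains the design.

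I would then assemble the pieces: given a tag system $T$ and input $w$, output the calculus $P_{T,w}$; prove (using the faithfulness lemma and post-completeness of classical logic) that $P_{T,w}$ axiomatizes / extends / is complete for $\mathbf{Cl}_{\{\neg,\vee\}}$ iff $T$ halts on $w$; conclude that a decision procedure for any of \textbf{Axm}, \textbf{Ext}, \textbf{Cmpl} would decide the halting problem for tag systems, a contradiction. The main obstacle I expect is the faithfulness lemma — specifically, ruling out \emph{spurious derivations}: one must show that the rules of modus ponens and substitution, applied to the base axioms plus the encoding formulas, cannot produce $H$ (or the completeness trigger) by some path not corresponding to a genuine tag-system computation. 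Controlling substitution is the delicate point, since it can instantiate variables wildly; the standard remedy is to design the encoding so that the ``active'' formulas have a rigid syntactic shape (e.g.\ closed normal forms, or formulas whose variable occurrences are pinned down), making any derivation step essentially forced. Verifying this rigidity, and that the encoding of words is injective up to provable equivalence, is the technical core of the proof; everything else is bookkeeping.
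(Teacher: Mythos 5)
The paper itself offers no proof of this statement: it is quoted as the historical Linial--Post result, with the detailed arguments attributed to Davis and to Yntema, and the machinery the paper actually builds in Section~3 is developed for signatures containing $\to$ and is deployed only for Theorem~\ref{T:main}. So your proposal must be judged on its own, and as written it is a plan rather than a proof. The reduction-from-a-halting-problem shape is the right one, but everything that makes such a proof work is deferred. The faithfulness lemma --- that modus ponens and substitution applied to the encoding axioms cannot reach the halting trigger except along a genuine computation --- is precisely where all the labour sits (in the paper's implicational version it is the content of Lemmas~\ref{L:NonUnifiableFormula}, \ref{L:AlphabeticFormulas}, \ref{L:Production} and~\ref{L:FormOfDerivations}), and you explicitly leave it unverified. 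For the signature $\{\neg,\vee\}$ you would in addition have to fix a concrete implication surrogate (say $A \to B := \neg A \vee B$), redo the non-unifiability arguments for codes built from $\neg$ and $\vee$, and check that the classical axioms of $\mathbf{Cl}_{\{\neg,\vee\}}$ (or whatever finite base you adjoin) do not interact with the codes to create spurious derivations; none of this is sketched, and it is not ``bookkeeping''.

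Two concrete points would also need repair. First, your paragraph on \textbf{Drv} aims at the wrong target: for $P_0 = \mathbf{Cl}_{\{\neg,\vee\}}$ the question ``given $P$, is $P_0 \geq P$?'' is decidable, since $[P] \subseteq [P_0]$ holds iff every axiom of $P$ is a classical tautology, which truth tables settle; that is exactly why the Linial--Post theorem is stated for \textbf{Axm}, \textbf{Ext} and \textbf{Cmpl} only, and why an undecidable derivability problem must be anchored at a different fixed calculus. Second, Post-completeness cannot supply the ``completeness trigger'' for \textbf{Cmpl}: it concerns adjoining a non-tautology to the full classical calculus, whereas here the candidate calculus must satisfy the promise $P \leq P_0$, so every formula you add is a tautology and the base system is far weaker than $\mathbf{Cl}$. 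The collapse has to be engineered directly, in the manner of the paper's halting axioms $(\mathrm{H})$: one adds tautologies of the form ``(code of a short word) implies $A$'' for each axiom $A$ of a complete axiomatization of $\mathbf{Cl}_{\{\neg,\vee\}}$, so that deriving a halting code yields the whole classical calculus by modus ponens. With that mechanism substituted and the faithfulness lemma actually proved, your outline would become the standard proof; as it stands the essential content is missing.
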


\begin{theorem}[Kuznetsov, 1963]
Fix a calculus $P_0 \geq \mathbf{Int}_{\{\neg, \vee, \wedge, \to\}}$, then \textbf{Axm}, \textbf{Ext}, and \textbf{Cmpl} are undecidable for $P_0$.
\end{theorem}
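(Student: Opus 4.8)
The plan is to derive all three undecidability statements from one reduction, uniform in $P_0$. Fix a finite axiomatization $A_1,\dots,A_n$ of $P_0$. Since $[P_0]$ is the closure of $\{A_1,\dots,A_n\}$ under modus ponens and substitution, for every calculus $P$ one has $P_0\leq P$ iff $A_1,\dots,A_n\in[P]$; thus \textbf{Ext} for $P_0$ is the problem ``given $P$, does $P\vdash A_i$ for every $i$?''. I would produce a computable map $I\mapsto P_I$ from instances of an undecidable problem to calculi over $\{\neg,\vee,\wedge,\to\}$ with (a) $P_I\leq P_0$ for all $I$, and (b) $P_0\leq P_I$ precisely for the ``yes'' instances. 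Given this, \textbf{Ext} is immediately undecidable; \textbf{Cmpl} is undecidable because (a) says the $P_I$ already satisfy its hypothesis $P_I\leq P_0$; and \textbf{Axm} is undecidable because (a) makes $P_0\sim P_I$ equivalent to $P_0\leq P_I$.

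For the construction I would start from a rewriting-style undecidable problem: the halting problem for tag systems (as in Section~3) or for Turing machines, or the word problem for a finitely presented semigroup. Fix such a device. Encode each configuration $c$ by a purely implicational formula $\rho(c)$ --- an implication pattern over auxiliary variables coding the symbols, with one distinguished variable kept in a fixed position so that the encoding is ``rigid'' --- and each transition rule by a formula $R_j$ stating, for an arbitrary context variable, that $\rho$ of the successor configuration follows from $\rho$ of its predecessor; arrange that $R_1,\dots,R_s$ and $\mathsf{Init}_I:=\rho(c_0)$ are theorems of $\mathbf{Int}_{\{\neg,\vee,\wedge,\to\}}$. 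Writing $H_I:=\rho(c_{\mathrm{halt}})$, put $P_I=\{\mathsf{Init}_I,\,R_1,\dots,R_s\}\cup\{H_I\to A_i : 1\le i\le n\}$. Condition (a) holds: $\mathsf{Init}_I$ and the $R_j$ are intuitionistic theorems and hence $P_0$-theorems, while each $H_I\to A_i$ is a $P_0$-theorem because $A_i$ is, and $A_i\to(H_I\to A_i)$ is an instance of $x\to(y\to x)\in[\mathbf{Int}_{\{\neg,\vee,\wedge,\to\}}]$.

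Condition (b), the ``yes'' direction, is the easy half: if $I$ halts, replaying the computation $c_0\to\dots\to c_{\mathrm{halt}}$ realises each step by one substitution into the appropriate $R_j$ followed by one modus ponens, so $\{\mathsf{Init}_I,R_1,\dots,R_s\}\vdash H_I$; then $n$ further applications of modus ponens, against the axioms $H_I\to A_i$, yield all the $A_i$, i.e.\ $P_0\leq P_I$. For the converse I must show that if $I$ does not halt then $A_i\notin[P_I]$ for some $i$. The essential claims are that $\{\mathsf{Init}_I,R_1,\dots,R_s\}\not\vdash H_I$ and, more delicately, that the axioms $H_I\to A_i$ cannot help derive $H_I$ either, so $P_I\not\vdash H_I$; since the ``reachability'' axioms $\mathsf{Init}_I,R_j$ are too weak to yield the $A_i$ on their own and the axioms $H_I\to A_i$ can be used (modulo harmless substitution) only after $H_I$ has been derived, it follows that some $A_i\notin[P_I]$. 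I would package these non-derivability claims algebraically: when $c_{\mathrm{halt}}$ is unreachable one reads off, from the set of reachable configurations, a logical matrix $\mathfrak{M}_I$ (a computable algebra on the four connectives with a set of designated values) that validates $\mathsf{Init}_I$, the $R_j$ and every $H_I\to A_i$, yet refutes some $A_i$; since validity in such a matrix is preserved by modus ponens and substitution, $[P_I]$ is contained in the set of formulas valid in $\mathfrak{M}_I$, so that $A_i\notin[P_I]$. Note that $\mathfrak{M}_I$ cannot be a Heyting algebra: when $P_0=\mathbf{Int}_{\{\neg,\vee,\wedge,\to\}}$ every $A_i$ is already an intuitionistic theorem, validated by all Heyting algebras, which is exactly why the theorem covers $\mathbf{Int}$ and its decidable extensions and not merely the undecidable ones.

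The main obstacle is precisely this ``no'' direction --- controlling every derivation in $P_I$, not only the intended simulation, and in particular ruling out a derivation that substitutes into a trigger axiom to get $H_I[\tau]\to A_i[\tau]$, then derives $H_I[\tau]$ and an unintended instance $A_i[\tau]$ that might be bootstrapped back to $A_i$. Preventing this is what forces the rigidity of the encoding: one must guarantee that the only ``$\rho$-shaped'' formulas derivable from $\{\mathsf{Init}_I,R_1,\dots,R_s\}$ are the $\rho(c)$ for reachable $c$, up to substitutions fixing the distinguished variable, so that no useful instance of $H_I$ appears when $I$ does not halt. Carrying this out --- designing the $R_j$ so that reachability is simulated faithfully and then turning the faithfulness into the matrix $\mathfrak{M}_I$ (or into a direct induction on derivations) --- is where the work of the proof lies; the forward simulation and the verifications of (a) and (b) are routine by comparison. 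If one imports the tag-system reduction developed later in the paper, the formulas $\mathsf{Init}_I$, $R_j$ and the non-derivability analysis come for free, and it remains only to add the trigger axioms $H_I\to A_i$ and the passage from undecidability of \textbf{Ext} to that of \textbf{Axm} and \textbf{Cmpl}.
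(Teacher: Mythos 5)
Your proposal is correct in outline and follows essentially the same route as the paper: although the paper states Kuznetsov's theorem only as a survey item, its own main theorem subsumes it (every superintuitionistic calculus derives $x \to (y \to x)$) and is proved by exactly your scheme --- a halting-problem reduction (tag systems) with configuration codes that are substitution instances of $x \to (y \to x)$, trigger axioms $\overline{\alpha} \to A$ for $A \in P_0$, the containment $P_{T,P_0,\xi} \leq P_0$, and the observation that this one reduction settles \textbf{Ext}, \textbf{Cmpl}, and \textbf{Axm} simultaneously. The ``no'' direction that you defer is precisely what the paper supplies syntactically (non-unifiability of the codes plus an induction on derivation height) rather than via a logical matrix, but both are standard ways to discharge that step.
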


Consider the intuitionistic implicational propositional calculus $\mathbf{Int}_{\{\to\}}$ with the set of axioms~\cite[p.69]{HilbertBernays:68}:

\smallskip \noindent
\begin{center}
\begin{tabular}{ll}
  $(\mathrm{A}_1)$ & $x \to (y \to x)$,  \\
  $(\mathrm{A}_2)$ & $(x \to (y \to z)) \to ((x \to y) \to (x \to z))$. \\
\end{tabular}
\end{center}
\smallskip

\noindent The classical implicational propositional calculus $\mathbf{Cl}_{\{\to\}}$ is obtained from $\mathbf{Int}_{\{\to\}}$ by adding the Peirce law $((x \to y) \to x) \to x$~\cite[p.52]{Tarski:83}.

\begin{theorem}[Bollman and Tapia, 1964]
\textbf{Ext} is undecidable for $\mathbf{Int}_{\{\to\}}$.
\end{theorem}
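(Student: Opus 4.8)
The plan is to reduce the halting problem for tag systems, which is undecidable, to \textbf{Ext} for $\mathbf{Int}_{\{\to\}}$. The heart of the argument is a uniform \emph{simulating calculus}: an algorithm that, from a tag system $T$ together with an input word $w$, produces a finite set $S_{T,w}$ of $\{\to\}$-formulas and a distinguished $\{\to\}$-formula $H_{T,w}$ such that $S_{T,w}\vdash H_{T,w}$ precisely when $T$ halts on $w$. I would obtain this by encoding configurations of $T$ as implicational formulas in a fixed normal form and each instruction of $T$ as an implicational ``rewrite'' axiom, so that a single application of modus ponens against an instruction axiom realises one computation step, with $H_{T,w}$ coding the halting configuration. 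A point requiring care already here is that the encoding be \emph{rigid} under the substitution rule — typically one uses disjoint blocks of auxiliary variables and a carefully chosen shape of the configuration formulas — so that substitution instances of the simulating axioms cannot bypass the step-by-step simulation.

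Granting such a construction, I would submit to \textbf{Ext} the instance
\begin{equation*}
  P_{T,w} \;=\; S_{T,w}\ \cup\ \bigl\{\,H_{T,w}\to\mathrm{A}_1,\ \ H_{T,w}\to\mathrm{A}_2\,\bigr\},
\end{equation*}
which is computed effectively from $(T,w)$. If $T$ halts on $w$, then $P_{T,w}\vdash H_{T,w}$, whence two applications of modus ponens give $P_{T,w}\vdash\mathrm{A}_1$ and $P_{T,w}\vdash\mathrm{A}_2$; since $\{\mathrm{A}_1,\mathrm{A}_2\}$ axiomatises $\mathbf{Int}_{\{\to\}}$, this yields $[\mathbf{Int}_{\{\to\}}]\subseteq[P_{T,w}]$, i.e. $\mathbf{Int}_{\{\to\}}\leq P_{T,w}$. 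This direction is routine.

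The main obstacle is the converse: if $T$ does not halt on $w$, one must show $\mathbf{Int}_{\{\to\}}\not\leq P_{T,w}$, i.e. that $P_{T,w}$ fails to derive some theorem of $\mathbf{Int}_{\{\to\}}$. The natural route is to exhibit a logical matrix $\mathfrak{M}$ that validates every formula of $P_{T,w}$ while refuting a suitable $\mathbf{Int}_{\{\to\}}$-theorem; since validity in $\mathfrak{M}$ is preserved by modus ponens and substitution, non-derivability follows. One builds $\mathfrak{M}$ so that its elements code the configurations of $T$ reachable from $w$ (plus extra ``control'' values), with $\to$ read off the transition relation so as to validate $S_{T,w}$; because $T$ never halts, the value coding the halting configuration is never forced to be designated, so $H_{T,w}$ is invalid in $\mathfrak{M}$, the guard formulas $H_{T,w}\to\mathrm{A}_i$ hold vacuously, and $\mathfrak{M}$ can be tuned to refute a fixed $\mathbf{Int}_{\{\to\}}$-theorem. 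Producing this matrix, and checking that the \emph{same} matrix simultaneously validates the guards and the simulation while refuting a theorem, is where essentially all the work lies: it is the ``semantic half'' of the Singletary/Bollman--Tapia method, and it is precisely to make it go through that the encoding in the first step must be designed with the matrix in mind. (For the stronger companion statement that \textbf{Cmpl} is likewise undecidable for $\mathbf{Int}_{\{\to\}}$ one would additionally require every axiom of $P_{T,w}$ to be a theorem of $\mathbf{Int}_{\{\to\}}$, which constrains the construction further but is not needed for \textbf{Ext} alone.)
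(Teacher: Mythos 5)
Your reduction has the right overall shape, and its first half agrees with what the paper does: the paper also reduces the halting problem for tag systems, encodes words as implicational formulas, encodes each production as an implicational rewrite axiom driven by one application of substitution plus modus ponens, and attaches ``halting'' axioms of the form $\overline{\zeta}\to A$ for the axioms $A$ of the target calculus (this theorem is obtained there as the special case $P_0=\mathbf{Int}_{\{\to\}}$ of Theorem~\ref{T:main}(1)). The easy direction (halting implies extension) is identical in both treatments.

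The gap is in the converse direction, which is where all the content lives and which you delegate to an unconstructed logical matrix. Two concrete problems. First, the claim that the guard formulas $H_{T,w}\to\mathrm{A}_i$ ``hold vacuously'' because the halting value is never designated is not how matrix validity works: an implication is valid in a matrix iff \emph{every} valuation sends it to a designated element, so you must specify $\to^{\mathfrak{M}}$ on all pairs of values, including those where the antecedent happens to be designated, and simultaneously arrange that some fixed intuitionistic theorem fails. Second, and more seriously, your matrix elements are supposed to ``code the configurations reachable from $w$,'' but the configuration formulas necessarily contain variables (otherwise the rewrite axioms could not be instantiated), so their matrix values vary with the valuation; ensuring that every substitution instance of every simulation axiom is designated, for an infinite set of reachable configurations, is precisely the hard part and is not addressed. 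The paper avoids matrices entirely: it proves a syntactic characterization $[P_T\cup\overline{\xi}]=P_T^*\cup T_\xi^*$ by induction on derivation height (Lemmas~\ref{L:Production} and~\ref{L:FormOfDerivations}), using non-unifiability of the codes (Lemmas~\ref{L:NonUnifiableFormula} and~\ref{L:AlphabeticFormulas}) to show that modus ponens can only ever fire an instance of a simulation axiom against the code of a reachable word, so that if $T$ never halts no halting axiom is ever applicable and no axiom of $P_0$ is ever derived. Until you either carry out the matrix construction in detail or replace it by such a closure argument, the non-halting direction of your reduction is unproven.
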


\begin{theorem}[Marcinkowski, 1994] \label{T:Marcinkowski}
Fix a $\{\to\}$-tautology $A$ that is not of the form $B \to B$ for some formula $B$, then \textbf{Ext} is undecidable for the $\{\to\}$-calculus $\{A\}$.
\end{theorem}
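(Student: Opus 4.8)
The plan is to reduce the halting problem for some universal model of computation—Tarski's favoured choice being tag systems, as promised in the introduction—to the question ``$\{A\} \leq P$'' for a given finite set $P$ of $\{\to\}$-formulas. Concretely, I would fix a fixed undecidable tag system $T$ and, from an input word $w$, effectively construct a $\{\to\}$-calculus $P_w$ together with a designated $\{\to\}$-formula $G$ such that $T$ halts on $w$ if and only if $P_w \vdash G$, and moreover so that $\{A\} \leq P_w$ holds precisely in the halting case. The halting computation of $T$ should be simulated step-by-step inside derivations from $P_w$ by the rules of modus ponens and substitution; encoding instantaneous descriptions of $T$ as implicational formulas (nested implications coding the tape word, with the axiom $A$ or its variables marking the relevant positions) is the technical heart, and this is where I expect the bulk of the work to lie. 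The constraint that $A$ is \emph{not} of the form $B \to B$ is exactly what guarantees that $A$ is not a tautology-by-triviality and hence that $[\{A\}]$ is a nondegenerate set, leaving room to engineer $P_w$ so that $[P_w]$ either contains $[\{A\}]$ or misses it depending on the computation.

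First I would set up the coding: choose a representation of tag-system configurations by $\{\to\}$-formulas, verify that a single transition step of $T$ corresponds to a short derivation using modus ponens and substitution from a finite ``program'' block of axioms $P_T$ depending only on $T$, and check the converse simulation—that \emph{every} derivation from $P_T$ (plus the start formula for $w$) that reaches a formula of the ``halting shape'' corresponds to a genuine halting computation. This faithfulness direction is typically the subtle part, because substitution is powerful and one must prevent spurious derivations; the standard device is to use distinct dedicated variables as ``markers'' and to arrange the axioms so that no unintended unification can occur. Second, I would glue the halting condition to the extension question: add to $P_w$ an auxiliary axiom (or axioms) of the form ``$G \to A$'' or, more carefully, a scheme that lets one derive $A$ — and hence everything in $[\{A\}]$ — as soon as the halting formula $G$ becomes derivable, while ensuring that in the non-halting case $[P_w]$ stays strictly below $[\{A\}]$ (for instance by choosing a suitable valuation or a suitable algebra/matrix that validates all of $P_w$ but refutes $A$). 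One must double-check that $A \in [\{A\}]$ and, conversely, that $[\{A\}] \subseteq [P_w]$ follows from $P_w \vdash A$, i.e.\ that adding $A$ as a derivable formula really yields the whole calculus $\{A\}$ — which is immediate, since $[\{A\}]$ is generated from $A$ by the two rules.

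The main obstacle, as indicated, is the \emph{soundness of the simulation in the reverse direction}: showing that from $P_w$ one cannot derive $G$ (and cannot derive $A$) unless $T$ halts on $w$. Purely implicational formulas are a thin medium, so the encoding must be chosen so that the only ``productive'' way to combine axioms by modus ponens is the intended one; I would likely invoke or adapt the combinatorial analysis of implicational derivations used by Singletary and Marcinkowski, where one tracks the ``principal'' variable through a derivation and argues by induction on derivation length that every derivable formula of a certain shape records a legitimate partial computation. A secondary technical point is eliminating the degenerate case $A = B \to B$ cleanly: since such an $A$ is derivable in essentially every implicational calculus, it cannot separate anything, which is why it is excluded; conversely for any other tautology $A$ one has enough ``non-trivial content'' to run the reduction, and I would verify this by exhibiting, for each such $A$, a concrete finite matrix refuting $A$ that can be extended to validate the non-halting $P_w$. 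Putting these pieces together gives a many-one reduction from the (undecidable) halting problem to \textbf{Ext} for $\{A\}$, establishing the theorem.
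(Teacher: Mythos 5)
This statement is quoted in the paper as Marcinkowski's 1994 result and is not proved there directly; the paper's own route is to prove the stronger Theorem~\ref{T:main}(1) (no tautology hypothesis, no exclusion of $B \to B$) by a tag-system reduction, while Marcinkowski's actual argument goes through entailment for first-order Horn clauses. Your plan coincides in outline with the paper's tag-system construction: a program block $P_T$ simulating productions, plus halting axioms of the form $\overline{\zeta} \to A$ so that $\{A\} \leq P_w$ iff $P_w \vdash A$ iff $T$ halts. But as written it is a plan, not a proof: the encoding of configurations and the faithfulness lemma, which you yourself identify as ``the technical heart,'' are entirely deferred. More seriously, your fallback for the non-halting direction --- exhibiting ``a suitable algebra/matrix that validates all of $P_w$ but refutes $A$'' --- is doubtful. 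In any construction of this kind the simulation axioms are themselves classical implicational tautologies (in the paper, every axiom of $P_T \cup \mathrm{H}$ is an implication between substitution instances of $x \to (y \to x)$, or has a tautology $A$ as consequent), so the two-element matrix validates both $P_w$ and $A$; you would need, for an \emph{arbitrary} tautology $A$, a nonstandard matrix separating $A$ from all of $P_w$, and producing one uniformly in $A$ is essentially the hard part of Marcinkowski's paper, not a routine step. The paper avoids this entirely by a purely syntactic argument: Lemmas~\ref{L:NonUnifiableFormula}, \ref{L:Production} and \ref{L:FormOfDerivations} show that every formula derivable from $P_{T,P_0,\xi}$ before the halting stage is a substitution instance of $x \circ y$ or $x \circ y \to z$, and the free parameter $\hat{x}$ in the definition of $x \circ y$ is chosen so that no formula of $P_0$ is such an instance; hence non-halting implies $A$ is never derived.

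A second concrete error is your explanation of the hypothesis on $A$. You claim that $A = B \to B$ is excluded because such an $A$ ``cannot separate anything'' and that the exclusion ``guarantees that $[\{A\}]$ is a nondegenerate set.'' Neither is right: $[\{x \to x\}]$ is a perfectly nontrivial deductively closed set, and the exclusion is an artifact of Marcinkowski's particular reduction, not a logical necessity --- the paper's main theorem and its final corollary establish undecidability of \textbf{Ext} for $\{A\}$ even when $A$ has the form $B \to B$ (indeed for every formula $A$ whatsoever). So the restriction should not be built into the architecture of your reduction; if your construction genuinely needed it, that would signal that the non-halting direction is being handled by a device (such as the matrix argument) that is weaker than necessary.
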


Since the implicational calculi $\mathbf{Cl}_{\{\to\}}$ and $\mathbf{Int}_{\{\to\}}$ can be axiomatized by the following single formulas, as shown by {\L}ukasiewicz~\cite{Lukasiewicz:48} and Meredith~\cite{Meredith:53},
\begin{align*}
  \mathbf{Cl}_{\{\to\}}  & \sim \{ ((x \to y) \to z) \to ((z \to x) \to (u \to x)) \} \\
  \mathbf{Int}_{\{\to\}} & \sim \{ ((x \to y) \to z) \to (u \to ((y \to (z \to v)) \to (y \to v))) \}
\end{align*}
the following result also makes sense.

\begin{corollary}
\textbf{Axm}, \textbf{Cmpl} are undecidable for $\mathbf{Cl}_{\{\to\}}$, and \textbf{Ext} is undecidable for $\mathbf{Cl}_{\{\to\}}$ and $\mathbf{Int}_{\{\to\}}$.
\end{corollary}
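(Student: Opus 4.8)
The plan is to read all three assertions off results already recorded, together with the {\L}ukasiewicz and Meredith single-axiom presentations. The observation I would isolate first, and then reuse, is that each of the recognition problems \textbf{Drv}, \textbf{Ext}, \textbf{Axm}, \textbf{Cmpl} attached to a fixed calculus $P_0$ depends on $P_0$ only through its set of theorems $[P_0]$: \textbf{Ext} asks whether $[P_0]\subseteq[P]$, \textbf{Axm} whether $[P_0]=[P]$, and \textbf{Cmpl} the latter under the promise $[P]\subseteq[P_0]$. Hence if $P_0\sim P_0'$ these are literally the same decision problems for $P_0$ and for $P_0'$, so undecidability is inherited when a calculus is replaced by an equivalent one. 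This licenses moving freely between $\mathbf{Cl}_{\{\to\}}$ and the single-axiom calculus $\{L\}$, and between $\mathbf{Int}_{\{\to\}}$ and $\{M\}$, where $L$ and $M$ denote the {\L}ukasiewicz and Meredith axioms displayed above.

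For \textbf{Ext}: for $\mathbf{Int}_{\{\to\}}$ the claim is exactly the Bollman--Tapia theorem. For $\mathbf{Cl}_{\{\to\}}$ I would apply Marcinkowski's theorem (Theorem~\ref{T:Marcinkowski}) with $A:=L$. Two checks are needed, both immediate: $L$ is a $\{\to\}$-tautology, since $[\{L\}]=[\mathbf{Cl}_{\{\to\}}]$ by {\L}ukasiewicz and so $L$ is a classical implicational theorem; and $L$ is not of the form $B\to B$, since its antecedent $(x\to y)\to z$ and consequent $(z\to x)\to(u\to x)$ are syntactically distinct. Theorem~\ref{T:Marcinkowski} then yields that \textbf{Ext} is undecidable for $\{L\}$, hence, by the invariance observation, for $\mathbf{Cl}_{\{\to\}}$. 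Running the same argument with $A:=M$ gives a second proof of the $\mathbf{Int}_{\{\to\}}$ case, since $M$ is an intuitionistic, hence classical, implicational tautology and visibly not of the form $B\to B$.

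For \textbf{Axm} and \textbf{Cmpl} for $\mathbf{Cl}_{\{\to\}}$ I would invoke Harrop's theorem, recalled in the introduction: both problems are undecidable for every propositional calculus that contains $x\to(y\to x)$ and $x\to x$. Now $\mathbf{Cl}_{\{\to\}}$ has $(\mathrm{A}_1)=x\to(y\to x)$ among its axioms, and $x\to x$ is derivable already in $\mathbf{Int}_{\{\to\}}\le\mathbf{Cl}_{\{\to\}}$ by the usual short Hilbert-style derivation from $(\mathrm{A}_1)$ and $(\mathrm{A}_2)$; so Harrop's hypotheses hold and \textbf{Axm}, \textbf{Cmpl} are undecidable for $\mathbf{Cl}_{\{\to\}}$ (in fact the same reasoning covers $\mathbf{Int}_{\{\to\}}$ as well).

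There is no genuine obstacle here, this being a corollary; the only points deserving a line of care are the two I have isolated — the invariance of the four recognition problems under replacing $P_0$ by an equivalent calculus, and the verification that the {\L}ukasiewicz and Meredith single axioms are implicational tautologies not of the form $B\to B$ — and both are settled by inspection. The one discretionary choice is which earlier result to lean on for \textbf{Axm} and \textbf{Cmpl}: I would use Harrop's theorem, though one could instead forward-reference this paper's own theorem on calculi containing $x\to(y\to x)$.
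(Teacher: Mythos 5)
Your argument is correct, and for the \textbf{Ext} assertions it is exactly what the paper intends: the problems \textbf{Drv}, \textbf{Ext}, \textbf{Axm}, \textbf{Cmpl} depend on $P_0$ only through $[P_0]$, so the {\L}ukasiewicz and Meredith single-axiom presentations let you feed $L$ and $M$ into Theorem~\ref{T:Marcinkowski} (both are implicational tautologies and visibly not of the form $B\to B$), and Bollman--Tapia already covers $\mathbf{Int}_{\{\to\}}$. Where you diverge is on \textbf{Axm} and \textbf{Cmpl}: the paper places the corollary so as to suggest these too are read off Marcinkowski's theorem via the single axiom $L$, which strictly speaking requires an extra observation --- \textbf{Ext} undecidability does not by itself yield \textbf{Cmpl} undecidability unless one checks that the calculi $P$ produced by the reduction satisfy the promise $P\le P_0$ (here, that they consist of classical implicational tautologies, which is why the paper claims \textbf{Axm}, \textbf{Cmpl} only for $\mathbf{Cl}_{\{\to\}}$ and not for $\mathbf{Int}_{\{\to\}}$). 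Your route through Harrop's theorem sidesteps that inspection entirely, since $\mathbf{Cl}_{\{\to\}}$ contains $x\to(y\to x)$ as an axiom and derives $x\to x$ by the standard $(\mathrm{A}_1)$/$(\mathrm{A}_2)$ computation; this is a cleaner and fully self-contained justification (and, as you note, it covers $\mathbf{Int}_{\{\to\}}$ as well), at the cost of leaning on a theorem the paper only recalls later in the section rather than on the Marcinkowski machinery the corollary is attached to.
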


In 1930, Tarski~\cite{Tarski:83} proved that every propositional calculus, which contains the formulas $x \to ( y \to x )$ and $x \to ( y \to ( ( x \to ( y \to z ) ) \to z ) )$, can be axiomatized by a single formula. Since these formulas are derivable from $\mathbf{Int}_{\{\to\}}$, we have the following corollary of the Marcinkowski result.

\begin{corollary} \label{C:Marchinkovski}
Fix a calculus $P_0 \geq \mathbf{Int}_{\{\to\}}$, then \textbf{Ext} is undecidable for $P_0$.
\end{corollary}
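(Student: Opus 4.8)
The plan is to deduce the corollary from Marcinkowski's theorem (Theorem~\ref{T:Marcinkowski}) by replacing $P_0$ with an equivalent one‑axiom calculus. First note that \textbf{Ext} for $P_0$ refers only to the set $[P_0]$: the instance ``$P_0 \le P$'' is by definition ``$[P_0] \subseteq [P]$'', so if $P_0 \sim P_0'$ then \textbf{Ext} is undecidable for $P_0$ exactly when it is undecidable for $P_0'$, and it suffices to exhibit one convenient representative. Now, since $P_0 \ge \mathbf{Int}_{\{\to\}}$, the set $[P_0]$ contains every theorem of $\mathbf{Int}_{\{\to\}}$, in particular the two formulas $x \to (y \to x)$ and $x \to (y \to ((x\to(y\to z))\to z))$ appearing in the Tarski theorem quoted above; hence $P_0$ is equivalent to a one‑axiom $\{\to\}$‑calculus $\{A_0\}$ with $A_0 \in [P_0]$.

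It remains to check that $A_0$ satisfies the hypotheses of Theorem~\ref{T:Marcinkowski}, i.e. that $A_0$ is a $\{\to\}$‑tautology that is not of the form $B \to B$. The second point is immediate: a one‑line induction on derivations shows that every theorem of a calculus $\{B \to B\}$ is again of the shape $C \to C$ (both substitution and modus ponens preserve this shape), and since $x \to (y \to x) \in [\{A_0\}]$ is not of that shape, $A_0$ cannot equal $B \to B$ for any $B$. For the first point, recall that every consistent extension of $\mathbf{Int}_{\{\to\}}$ is contained in $\mathbf{Cl}_{\{\to\}}$, so all its theorems — in particular $A_0$ — are $\{\to\}$‑tautologies. (Self‑contained version: were some derivable formula refuted by a valuation $v$, substituting $p$ for the variables with $v=0$ and $p\to p$ for those with $v=1$ would produce a derivable one‑variable formula that is intuitionistically equivalent to $p$ — each one‑variable $\{\to\}$‑formula is $\mathbf{Int}_{\{\to\}}$‑equivalent to $p$ or to $p\to p$, because $\{a,1\}$ is closed under $\to$ in any Heyting algebra — whence $P_0 \vdash p$ and $[P_0]$ is the set of all formulas, a degenerate case that can be dismissed separately.)

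With $A_0$ a $\{\to\}$‑tautology not of the form $B\to B$, Theorem~\ref{T:Marcinkowski} gives that \textbf{Ext} is undecidable for $\{A_0\}$, and since $P_0 \sim \{A_0\}$ the corollary follows. The only step that needs genuine care is the verification in the previous paragraph: Tarski's construction supplies \emph{some} single axiom, and one must argue that it is strong enough (not of the form $B\to B$) yet not too strong (still a tautology) for Marcinkowski's theorem to apply; the shape‑preservation argument and the classical soundness of consistent implicational superintuitionistic logics provide exactly these two facts.
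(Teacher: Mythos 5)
Your proof is correct and takes essentially the same route as the paper, which obtains this corollary by combining Tarski's single-axiom theorem with Theorem~\ref{T:Marcinkowski}; in fact you are more careful than the paper, which asserts the implication without verifying that the resulting single axiom $A_0$ is a $\{\to\}$-tautology not of the form $B \to B$ (your shape-preservation argument and the reduction to one-variable formulas supply exactly these checks). The one point you flag but do not close --- the inconsistent $P_0$ with $[P_0]$ the set of all formulas, whose single axiom is not a tautology, so Marcinkowski's theorem does not apply --- is a genuine edge case, but it is one the paper's own derivation silently shares and that is only settled later by Theorem~\ref{T:main}(1).
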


\begin{theorem}[Bokov and Marcinkowski~\footnote{According to the recent Chvalovsk\'{y} observation.}, 2014] \label{T:Bokov}
Fix a calculus $P_0 \geq \mathbf{Int}_{\{\to\}}$, then \textbf{Axm} and \textbf{Cmpl} are undecidable for $P_0$.
\end{theorem}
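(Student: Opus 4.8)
The plan is to reduce the undecidability of \textbf{Ext} (Corollary~\ref{C:Marchinkovski}) to the undecidability of \textbf{Axm} and \textbf{Cmpl} for a fixed $P_0 \geq \mathbf{Int}_{\{\to\}}$. Given an instance $P$ of the extension problem, I want to build a calculus $P'$ such that $P' \leq P_0$ always holds (so that $P'$ is a legitimate instance of \textbf{Cmpl}), and such that $P_0 \leq P'$ if and only if $P_0 \leq P$. If this works, then deciding \textbf{Cmpl} for $P_0$ would decide \textbf{Ext} for $P_0$, and the same reduction also settles \textbf{Axm} since $P' \leq P_0$ forces $P_0 \sim P' \Leftrightarrow P_0 \leq P'$.

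The natural candidate is to take $P'$ to be $P$ together with a finite axiomatization of $P_0$ "guarded" so that it is conservative unless $P$ already derives everything. Concretely, since $P_0$ is finitely axiomatizable (Tarski's single-axiom theorem for calculi containing $x\to(y\to x)$ and $x\to(y\to((x\to(y\to z))\to z))$, both derivable from $\mathbf{Int}_{\{\to\}}$), write $P_0 \sim \{C\}$ for a single formula $C$. The idea is to let
\begin{equation*}
  P' = \{ A \to C : A \in P \} \cup (\text{a fixed finite part of } P_0),
\end{equation*}
chosen so that: (i) each $A \to C$ is a theorem of $P_0$ (using $x\to(y\to x)$, since $C \in [P_0]$), hence $P' \leq P_0$; (ii) if $P \vdash A$ for some $A$ for which $A \to C \in P'$ was included, then modus ponens yields $C$, hence $P_0 \leq P \cup P'$, and with a little care $P_0 \leq P'$ itself once $P$ is absorbed; (iii) conversely if $P_0 \not\leq P$ then $P'$ still fails to derive $C$. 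The precise bookkeeping — in particular making the equivalence go through at the level of $P'$ rather than $P \cup P'$, and ensuring the "fixed finite part" does not already prove $C$ — is where the real work lies.

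The main obstacle I anticipate is exactly this conservativity direction: showing that adding the guarded formulas $A \to C$ to a weak base does not accidentally derive $C$ unless $P$ does the job. Because substitution is available, one must check that no substitution instance of an $A\to C$ combined with the base yields $C$ through an unexpected route; this is the kind of step where Marcinkowski's and Singletary's constructions are delicate, and one may need to encode the tag-system instance directly rather than treating \textbf{Ext} as a black box. I would therefore fall back on the explicit tag-system reduction sketched in Section~3 of the paper: take the calculus $P$ produced there, for which $P_0 \leq P$ iff the tag system halts, and apply the guarding trick to that specific $P$, where the syntactic shape of the axioms is under control. The remaining steps — verifying $P' \leq P_0$, verifying the halting $\Rightarrow$ completeness direction via modus ponens on the guard, and verifying the non-halting $\Rightarrow$ incompleteness direction — should then be routine given the earlier machinery.
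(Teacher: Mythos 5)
There is a genuine gap in your main construction. For the guarded calculus $P' = \{A \to C : A \in P\} \cup (\text{fixed base})$, the direction ``$P_0 \leq P$ implies $P_0 \leq P'$'' fails: $P'$ contains only the implications $A \to C$, not the antecedents $A$, and it has no mechanism for deriving any $A \in P$, so the guard can never fire. Take $P = \{C\}$: then $P_0 \leq P$, but $P' = \{C \to C\} \cup (\text{base})$ is far too weak to derive $C$. Your step ``if $P \vdash A$ \dots then modus ponens yields $C$'' conflates derivability in $P$ with derivability in $P'$; what you actually get is $P_0 \leq P \cup P'$, which holds unconditionally and therefore carries no information. So the black-box reduction of \textbf{Ext} to \textbf{Cmpl} does not go through as described.

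Your fallback — work with the explicit tag-system calculus — is the right instinct, but the point you are missing is that no additional guarding is needed there at all. The calculus $P_{T,P_0,\xi}$ already has the guards built in: its halting-condition axioms are exactly $\overline{\alpha} \to A$ for short words $\alpha$ and $A \in P_0$, and these do fire, because when $T$ halts on $\xi$ the calculus genuinely derives a code $\overline{\zeta}$ with $|\zeta| < d$ (Corollary~\ref{C:Derivability}), not merely an implication with that code as antecedent. Moreover $P_{T,P_0,\xi} \leq P_0$ holds automatically whenever $P_0 \geq \{x \to (y \to x)\}$ (in particular for $P_0 \geq \mathbf{Int}_{\{\to\}}$, where this is axiom $\mathrm{A}_1$): every axiom of $P_T$ and every code formula is a substitution instance of $x \to (y \to x)$ (Lemmas~\ref{L:Inclusion} and~\ref{L:DerivabilityOfAFormulas}), and each $\overline{\alpha} \to A$ with $A \in [P_0]$ follows from $A$ and $A \to (\overline{\alpha} \to A)$. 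Hence $P_{T,P_0,\xi}$ is a legitimate instance of \textbf{Cmpl}, and by Lemma~\ref{L:HaltingCondition} we have $P_0 \leq P_{T,P_0,\xi}$ (equivalently $P_0 \sim P_{T,P_0,\xi}$, since the reverse inclusion always holds) if and only if $T$ halts on $\xi$. The \textbf{Ext} reduction is thus verbatim a reduction to \textbf{Cmpl} and to \textbf{Axm}; this is exactly how the paper argues, and no Tarski single-axiom formula $C$ or auxiliary calculus $P'$ is required.
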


It is important to note that Corollary~\ref{C:Marchinkovski} and Theorem~\ref{T:Bokov} was obtained quite a long time ago by Harrop~\cite{Harrop:64:RPPC}.

\begin{theorem}[Harrop, 1964]
Fix a calculus $P_0 \geq \{ x \to (y \to x),\ x \to x\}$, then \textbf{Axm}, \textbf{Ext}, and \textbf{Cmpl} are undecidable for $P_0$.
\end{theorem}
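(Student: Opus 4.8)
\emph{Proof idea.} The plan is to prove all three statements at once by reducing the halting problem for tag systems --- reduced to the derivation problem for propositional calculi in Section~3 --- to each of \textbf{Ext}, \textbf{Axm} and \textbf{Cmpl} for $P_0$. Writing $P_0=\{A_1,\dots,A_n\}$, I would construct a computable map $I\mapsto P_I$ from instances of the halting problem to calculi such that
\[
  P_I\leq P_0\ \text{ for every }I\qquad\text{and}\qquad P_0\leq P_I\ \Longleftrightarrow\ I\ \text{halts.}
\]
These two properties settle everything: \textbf{Ext} for $P_0$ is literally the question ``$P_0\leq P_I$?''; since $P_I\leq P_0$ always holds, ``$P_0\sim P_I$?'' reduces to the same question, giving \textbf{Axm}; and, again because $P_I\leq P_0$, every $P_I$ is an admissible input to \textbf{Cmpl}, for which the question is once more ``$P_0\leq P_I$?''. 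The hypothesis on $P_0$ is used only to secure $P_I\leq P_0$, hence only for \textbf{Axm} and \textbf{Cmpl}; \textbf{Ext} follows from the equivalence alone.

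For the construction I would call on the reduction of Section~3 in a form adapted to $P_0$: a computable map $I\mapsto(Q_I,F_I)$ with $Q_I$ a finite calculus, $F_I$ a formula whose principal connective is $\to$, and $Q_I\vdash F_I$ iff $I$ halts, arranged moreover so that every axiom of $Q_I$ is a theorem of $\{x\to(y\to x),\ x\to x\}$, hence of $P_0$. Put
\[
  P_I\;:=\;Q_I\;\cup\;\{\,F_I\to A_1,\ \dots,\ F_I\to A_n\,\}.
\]
Then $P_I\leq P_0$: the axioms of $Q_I$ are theorems of $P_0$ by construction, and for each $i$ the formula $A_i\to(F_I\to A_i)$ is a substitution instance of $x\to(y\to x)$, hence a theorem of $P_0$, so $F_I\to A_i\in[P_0]$ by modus ponens --- this is exactly the point where $x\to(y\to x)\in[P_0]$ is used. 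And if $I$ halts then $Q_I\vdash F_I$, so $P_I\vdash F_I$, so $P_I\vdash A_i$ for every $i$ by modus ponens, whence $[P_0]\subseteq[P_I]$.

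The reverse implication --- $I$ does not halt implies $P_0\not\leq P_I$ --- is the core of the matter and the step I expect to be hardest. I would fix a witness $W\in[P_0]$ that does not belong to $[Q_I]$, has $\to$ as principal connective, and has a variable on the left (e.g.\ $W=x\to(y\to x)$; here the encoding of Section~3 must be chosen so that $Q_I$ is a \emph{proper} subcalculus of $\{x\to(y\to x),\ x\to x\}$), and then show $P_I\not\vdash W$ under the assumption $Q_I\not\vdash F_I$. Note that $W$ is not a substitution instance of any $F_I\to A_i$, since no substitution instance of $F_I$ is a variable whereas the antecedent of $W$ is. The obstacle is leakage through the substitution rule: $F_I\to A_i$ yields $\sigma F_I\to\sigma A_i$, and an instance $\sigma A_i$ becomes derivable the moment $P_I$ derives the matching $\sigma F_I$; worse, substitution instances of the links can fire modus ponens against one another and against theorems of $Q_I$. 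The key lemma to aim for is a normal-form statement: when $Q_I\not\vdash F_I$, no substitution instance of $F_I$ is derivable in $P_I$ at all; the links are then inert, $[P_I]$ is the closure of $Q_I$ under the rules together with the bare substitution instances of the links, and $W$ lies outside this set. Establishing this lemma is where the technical weight sits: one must verify that no substitution instance of $F_I$ equals a theorem of $Q_I$ or a formula of the form $\sigma F_I\to\sigma A_i$, and that these syntactic separations are preserved under modus ponens. This forces two further design requirements on the Section~3 encoding --- a rigidity property keeping the theorems of $Q_I$ confined to configuration encodings and controlled closure instances that can never match a link, and a ``non-self-embedding'' property of $F_I$ (e.g.\ a left-deep repetition of a single variable, so that the relevant matching equations fail the occurs check) ruling out link-against-link firings. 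As in the related constructions of Singletary and Marcinkowski, I would expect most of the proof to consist of making these separations precise and checking them by induction on derivations.
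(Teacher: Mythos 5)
Your plan is correct and is essentially the approach the paper itself takes: although the paper cites Harrop for this statement, its own Theorem~\ref{T:main} (proved in Sections~3--4) subsumes it via exactly your architecture --- your $Q_I$ is the tag-system simulator $P_T\cup\overline{\xi}$, whose axioms are substitution instances of $x\to(y\to x)$; your links $F_I\to A_i$ are the halting axioms $(\mathrm{H})$; and your key normal-form lemma is Lemma~\ref{L:Production} together with Lemma~\ref{L:FormOfDerivations}, secured by precisely the rigidity and non-unifiability properties you identify (Lemmas~\ref{L:NonUnifiableFormula} and~\ref{L:AlphabeticFormulas} for the $x\circ y$ encoding). The only differences are cosmetic: the paper triggers the links on the finite set of codes of all short words rather than on a single formula $F_I$, and its construction shows the $x\to x$ hypothesis is in fact dispensable.
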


Therefore, in order to prove the undecidability of the recognizing problem for a propositional calculus $P_0$, we must prove derivations of the formulas $x \to (y \to x)$ and $x \to x$ from $P_0$. In this paper we show that the second derivation, i.e., the derivation of the formula $x \to x$, is redundant to prove the undecidability of \textbf{Axm} and \textbf{Cmpl}. Indeed, as it was shown by Singletary in~\cite{Singletary:68:RRA} the derivation of the formula $x \to (y \to x)$ is sufficient to construct an undecidable propositional calculus.

\begin{theorem}[Singletary, 1968]
There exists a propositional calculus $P_0 \leq \{x \to (y \to x)\}$ for which \textbf{Drv} is undecidable.
\end{theorem}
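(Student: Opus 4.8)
The plan is to reduce an undecidable string-rewriting problem --- say the halting problem of a fixed tag system whose halting problem is undecidable (equivalently, the word problem of a fixed semi-Thue system) --- to the derivability problem for a single, carefully engineered calculus $P_0$. The query of \textbf{Drv} for $P_0$ is ``given $P$, decide whether $P_0 \ge P$'', and for a one-axiom calculus $P = \{A\}$ one has $P_0 \ge \{A\}$ if and only if $A \in [P_0]$. So it suffices to exhibit one fixed $P_0$ together with a computable family of formulas $A_w$, indexed by the inputs $w$ of the rewriting system, such that $A_w \in [P_0]$ exactly when the system halts on $w$. Write $K := x \to (y \to x)$. The requirement $P_0 \le \{K\}$, i.e. $[P_0] \subseteq [\{K\}]$, will be secured by making every axiom of $P_0$ derivable from $K$: since $[\{K\}]$ is closed under substitution and modus ponens and is the least such set containing $K$, this already forces $[P_0] \subseteq [\{K\}]$.

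To have room to work inside the $K$-fragment I would use the following stock of $K$-derivable formulas: by a one-line induction on $k$, using that $I \to (D \to I)$ is a substitution instance of $K$ whenever $I$ is one, every formula of the shape $D_1 \to (D_2 \to \cdots \to (D_k \to (\alpha \to (\beta \to \alpha))) \cdots)$ belongs to $[\{K\}]$, for arbitrary $D_1, \dots, D_k, \alpha, \beta$. Accordingly I would encode a configuration of the rewriting system --- a finite word over its working alphabet --- by a nested implication $\mathrm{Code}(\cdot)$ that uses one fixed propositional variable per alphabet letter, a few control variables marking the active end(s) of the word, and a harmless innermost core $u \to (v \to u)$ in fresh variables $u, v$, so that codes, and the transition axioms built from them, fall into the family above and are $K$-derivable automatically. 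Each rewrite rule becomes a schematic axiom relating $\mathrm{Code}$ of a configuration to $\mathrm{Code}$ of its successor, with a propositional variable standing for the untouched remainder of the word; a couple of bookkeeping axioms install the initial configuration $\mathrm{Code}(w)$ and a halt-detecting implication into the designated formula $\mathrm{Halt} := A_w$. One substitution followed by one modus ponens with the appropriate axiom is then meant to mirror one step of the system, so that $P_0 \vdash \mathrm{Halt}$ iff the system halts on $w$.

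Correctness of the simulation splits into the two usual directions. Completeness is routine bookkeeping: a halting run of length $t$ unwinds into a $P_0$-derivation of $\mathrm{Halt}$, one rewrite step per (substitution, modus ponens) pair. Soundness --- that every $P_0$-derivation of $\mathrm{Halt}$ arises from a genuine halting run, with no forged derivation taking a shortcut --- is the main obstacle, since modus ponens and especially substitution are extremely permissive. The standard remedy is to isolate a syntactic invariant: a class of ``legal'' formulas (essentially the substitution instances of the $\mathrm{Code}$-templates and of the axioms, together with the intermediate forms genuinely reachable along a run) that is closed under the two rules, visibly contains $[P_0]$, and is narrow enough that one can read off from it that the only route to $\mathrm{Halt}$ is through a run of the system; the control variables are present precisely to keep the rewriting discipline rigid under substitution. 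What makes Singletary's theorem sharper than unrestricted constructions is that all of this has to be done inside the meagre $K$-provable fragment, which severely restricts the formula shapes available for building unforgeable codes --- this is where the construction is genuinely delicate and where most of the work lies.

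Granting the biconditional we are done: $w \mapsto (P_0, A_w)$ is computable, the rewriting system halts on $w$ iff $P_0 \vdash A_w$ iff $P_0 \ge \{A_w\}$, so \textbf{Drv} is undecidable for $P_0$; and every axiom of $P_0$ was arranged to be $K$-derivable, whence $P_0 \le \{x \to (y \to x)\}$.
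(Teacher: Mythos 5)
The paper offers no proof of this statement: it is quoted as a known result of Singletary (1968), so the only thing your attempt can be measured against is the general method of Section 3.2 and the adjacent machinery of Section 3.5. Against that yardstick, several of your observations are sound and match the paper's own devices: reducing \textbf{Drv} for $P_0$ to the question ``$P_0 \vdash A$?'' via the single-axiom case $P=\{A\}$ is correct; securing $P_0 \le \{K\}$, $K = x \to (y \to x)$, by making every axiom of $P_0$ a $K$-theorem is exactly the paper's Lemma~\ref{L:Inclusion}; and your characterization of the $K$-derivable formulas as the nested implications $D_1 \to (\cdots \to (D_k \to \sigma K)\cdots)$ is right and is in fact the complete description of $[\{K\}]$.

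Nevertheless, what you have written is a plan rather than a proof, and two of the gaps are structural. First, since $P_0$ is fixed, all dependence on the input $w$ must sit in the target formula $A_w$; your phrase ``bookkeeping axioms install the initial configuration $\mathrm{Code}(w)$'' cannot be taken literally, and you never say what $A_w$ actually is. This is not a cosmetic omission, because it determines the whole architecture of the derivation: the forward style the paper uses for \textbf{Ext} (adjoin $\overline{\xi}$ as an axiom and derive the codes of successive configurations by modus ponens) is unavailable when the calculus may not vary with the input, and the fragment below $K$ contains no transitivity combinator --- $(x\to y)\to((z\to x)\to(z\to y))$ is not of the form $D_1\to(\cdots\to(D_k\to\sigma K)\cdots)$, so no $P_0\le\{K\}$ can compose implications in general (the paper makes the analogous point about its subsystems $\mathrm{T}$ and $\mathrm{R}$). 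One is therefore forced into something like a backward simulation, with axioms of the shape $(\mathrm{Code}(c')\to Z)\to(\mathrm{Code}(c)\to Z)$ and $A_w=\mathrm{Code}(w)\to Z$ for a $K$-instance $Z$; your sketch never commits to a workable choice. Second, the soundness direction --- the analogue of the paper's Lemma~\ref{L:Production}, i.e. an exact description of $[P_0]$ ruling out forged derivations of $A_w$ for non-halting $w$ --- is the entire content of the theorem, and you explicitly defer it. Until a concrete encoding is fixed and that invariant is verified, the biconditional ``$P_0\vdash A_w$ iff the machine halts on $w$'' is asserted rather than proved.
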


\noindent Furthermore, we also prove that the derivation of the formula $x \to (y \to x)$ is redundant to prove the undecidability of \textbf{Ext}. Thus, our main result is the following theorem.

\begin{theorem} \label{T:main}
Fix a propositional calculus $P_0$, then

$(1)$\ \textbf{Ext} is undecidable for $P_0$;

$(2)$\ \textbf{Cmpl} is undecidable for $P_0$ if $P_0 \geq \{x \to (y \to x)\}$.
\end{theorem}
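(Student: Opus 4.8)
The plan is to reduce an undecidable problem about tag systems (or, equivalently, about the word problem for a suitable rewriting system) to both parts of the theorem, following the general scheme sketched in the introduction. The key idea is a uniform encoding: given an arbitrary tag system $T$ together with an input word $w$, one constructs a finite set of $\Sigma$-formulas $\Pi(T)$ and a single formula $G_w$ such that a computation of $T$ on $w$ halts if and only if $G_w$ is derivable from $\Pi(T)$ by modus ponens and substitution. This is exactly the content of the reduction announced for Section 3, so I will assume it is available. The delicate point, and the reason the theorem holds for \emph{every} $P_0$ rather than only for calculi containing $x\to(y\to x)$, is that the encoding must be carried out relative to $P_0$: instead of building a calculus from scratch, one enlarges $P_0$ by the encoding axioms, so the derivations of $P_0$ are always present as ``noise'' that the encoding must survive. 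The standard trick for this is to tag every encoding variable with a fixed fresh propositional variable (or a fixed connective application) so that substitution instances coming from the interaction of $\Pi(T)$ with $[P_0]$ cannot interfere with the simulation; this isolates the computation inside a syntactic ``layer'' untouched by $P_0$.

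For part $(1)$, given $P_0$ I would proceed as follows. First, fix an undecidable instance family: pairs $(T,w)$ for which the halting question is undecidable. Given such a pair, form the calculus
\begin{equation*}
  P \;=\; P_0 \,\cup\, \Pi(T) \,\cup\, \{\,B_1,\dots,B_k\,\},
\end{equation*}
where $\Pi(T)$ is the layered encoding of the tag system and the $B_i$ are auxiliary formulas chosen so that $[P]$ always contains $[P_0]$ (this is automatic since $P_0\subseteq P$) but contains a designated ``goal axiom'' $A_0$ of $P_0$'s presentation only when $T$ halts on $w$. Concretely, one wants to arrange that $P_0\leq P$ iff every axiom of $P_0$ is derivable from $P$; since $P_0\subseteq P$ already gives $P_0\leq P$ unconditionally, the real work is to instead take $P = \Pi(T)\cup\{\text{extra axioms}\}$ \emph{not} containing $P_0$, and to show $P_0\leq P$ holds precisely when the computation halts. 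The layered encoding guarantees that the only way the axioms of $P_0$ can become derivable is through a ``release'' formula that the simulation produces exactly at a halting configuration; before halting, the $P_0$-layer is inaccessible because every formula derivable from $\Pi(T)$ carries the computation-tag. Undecidability of \textbf{Ext} for $P_0$ then follows: deciding $P_0\leq P$ would decide halting.

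For part $(2)$, I would use that $P_0\geq\{x\to(y\to x)\}$, i.e., $P_0$ proves $\mathrm{A}_1$. Here one must additionally ensure that the instance calculus $P$ satisfies $P\leq P_0$ (so that \textbf{Cmpl} even applies), and then show $P_0\leq P$ iff $T$ halts. The presence of $x\to(y\to x)$ is what lets one build the ``diamond'' of formulas needed to detect derivability purely inside $[P_0]$: using $\mathrm{A}_1$ one can weaken any proved formula and thereby show that each candidate axiom of $P$ is a theorem of $P_0$, securing $P\leq P_0$; conversely the halting-release formula, once produced, generates all of $[P_0]$ because $\mathrm{A}_1$ (and the goal axiom it unlocks) together axiomatize $P_0$. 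So the construction is: take the $P_0$-provable formula $\mathrm{A}_1$, adjoin $\Pi(T)$ and a release mechanism, verify $P\subseteq[P_0]$ by weakening arguments, and verify $P_0\leq P$ iff halting.

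The main obstacle I anticipate is the layering/tagging argument that makes the simulation \emph{faithful in the presence of $P_0$}: one must prove a normal-form lemma saying that any $P$-derivation of the goal formula can be rearranged so that the $P_0$-axioms are used only after the tag is released, i.e., that $P_0$'s theorems cannot ``shortcut'' the tag-system simulation. This requires a careful induction on derivation length, tracking which formulas are tagged, and handling the fact that substitution can instantiate variables of $P_0$-axioms by tagged formulas — one must show such instances are harmless (they still cannot produce an untagged formula outside $[P_0]$, and they cannot interfere with the simulation's bookkeeping). Once this separation lemma is in hand, both $(1)$ and $(2)$ reduce to the already-available reduction from the tag-system halting problem to the derivability problem, and the undecidability conclusions are immediate.
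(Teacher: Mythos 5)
Your architecture is the same as the paper's: reduce the halting problem for tag systems by building, for each instance $(T,\xi)$, a calculus consisting of simulation axioms plus ``release'' axioms that yield the axioms of $P_0$ exactly when a halting configuration is reached, so that $P_0 \leq P$ iff $T$ halts on $\xi$; part $(2)$ then follows from part $(1)$ once one checks $P \leq P_0$ using $x \to (y \to x)$ as a weakening principle. Your self-correction (the constructed calculus must \emph{not} contain $P_0$ as a subset, only release its axioms conditionally) is exactly right, and matches the paper's halting axioms $\overline{\alpha} \to A$ for $|\alpha| < d$ and $A \in P_0$.

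However, the proposal has a genuine gap: everything that makes the theorem true is deferred. You ``assume available'' the encoding $\Pi(T)$ and you ``anticipate'' the separation lemma, but these are the entire technical content. Concretely, the paper must (a) design a word-to-formula encoding whose codes are pairwise non-unifiable (it encodes a word as the \emph{set} of all bracketings of a product of letter-codes and adds rebracketing axioms, precisely so that the production axioms can be applied to any parse of the word); (b) prove by induction on derivation height that $[P_T \cup \overline{\alpha}] = P_T^* \cup T_\alpha^*$, which rests on the non-unifiability of $x \circ y$ with $(x \circ y) \to z$; and (c) --- this is the one idea you are missing rather than merely postponing --- handle an \emph{arbitrary} $P_0$ in the converse direction. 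Your worry that ``$P_0$'s theorems might shortcut the simulation'' is misplaced, since $P_0$'s axioms are never axioms of the constructed calculus; the actual danger is that some axiom of $P_0$ might itself be a substitution instance of the tagged shape $x \circ y$ or $x \circ y \to z$, in which case the release axioms could fire without halting. The paper defuses this by leaving a free parameter $\hat{x}$ inside the combinator $x \circ y := ((\hat{y}\to\hat{y})\to\hat{y}) \to (\hat{x}\to((\hat{y}\to\hat{y})\to\hat{y}))$ and choosing $\hat{x}$ deep enough, relative to the finite set $P_0$, that no formula of $P_0$ unifies with $x \circ y$ or $x \circ y \to z$. Your ``tag with a fresh variable'' gesture points in this direction but does not supply a mechanism that survives the substitution rule, which can identify any two variables; without the explicit $\hat{x}$-parameterized construction and the unifiability analysis, the claim that the simulation is faithful for every $P_0$ is unsupported.
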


As corollary, we have the undecidability of the problem of recognizing axiomatizations.

\begin{corollary}
Fix a calculus $P_0 \geq \{x \to (y \to x)\}$, then \textbf{Axm} is undecidable for $P_0$.
\end{corollary}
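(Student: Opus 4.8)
The plan is to deduce the Corollary directly from Theorem~\ref{T:main}(2) together with the very definitions of the recognizing problems. Recall that for a fixed calculus $P_0$, the problem \textbf{Axm} asks, given $P$, whether $P_0 \sim P$, i.e. whether $[P_0] = [P]$; this holds precisely when both $P_0 \leq P$ and $P_0 \geq P$ hold. The problem \textbf{Cmpl} is the restriction of the $P_0 \leq P$ question to inputs $P$ already known to satisfy $P \leq P_0$. So on the sub-domain $\{P : P \leq P_0\}$, the predicates ``$P_0 \sim P$'' and ``$P_0 \leq P$'' coincide, because $P \leq P_0$ is already guaranteed. This gives an immediate many-one reduction of \textbf{Cmpl} to \textbf{Axm}, once we can certify membership in that sub-domain effectively.

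The key step, then, is to observe that in the reduction underlying Theorem~\ref{T:main}(2), the calculi $P$ produced by the reduction from the (undecidable) source problem are, by construction, always theorems-only extensions to be tested, i.e. they already satisfy $P \leq P_0$ — indeed this is exactly what the hypothesis of \textbf{Cmpl} demands of its instances, so any reduction witnessing undecidability of \textbf{Cmpl} for $P_0$ outputs such $P$. First I would state this explicitly: let $f$ be the computable function reducing some undecidable problem to \textbf{Cmpl} for $P_0$, so that for every input $w$, the calculus $f(w)$ satisfies $f(w) \leq P_0$, and $w$ is a positive instance iff $P_0 \leq f(w)$. Then for each such $w$ we have $P_0 \sim f(w)$ iff $P_0 \leq f(w)$, since $f(w) \geq P_0 \wedge f(w) \leq P_0$ reduces to the single condition $f(w) \leq P_0$ being upgraded to equality, which here is $P_0 \leq f(w)$. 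Hence $f$ is simultaneously a reduction to \textbf{Axm}, and \textbf{Axm} is undecidable for $P_0$.

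I do not anticipate a genuine obstacle here; the one point requiring a word of care is making sure the instances $f(w)$ lie in the domain where \textbf{Cmpl} and \textbf{Axm} agree — that is, confirming $f(w) \leq P_0$ — but this is part of the statement of \textbf{Cmpl} and hence automatically furnished by whatever reduction proves Theorem~\ref{T:main}(2). An alternative, equally short route avoids even mentioning \textbf{Cmpl}: run the reduction from the halting problem of tag systems (developed in Section~3 and used in the proof of Theorem~\ref{T:main}) directly, noting that it outputs calculi $P$ consisting of theorems of $P_0$ together with, possibly, the axiom $x \to (y \to x)$ which is itself a theorem of $P_0$ by hypothesis; then $P \leq P_0$ always holds, so $P_0 \sim P$ is equivalent to $P_0 \leq P$, and the latter is undecidable by Theorem~\ref{T:main}. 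Either way the corollary follows in a few lines, and the substantive content is entirely contained in Theorem~\ref{T:main}.
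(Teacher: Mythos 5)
Your argument is correct and is exactly the derivation the paper intends (the paper states this corollary without proof, as an immediate consequence of Theorem~\ref{T:main}(2)): on the promise domain $P \leq P_0$ of \textbf{Cmpl}, the conditions $P_0 \sim P$ and $P_0 \leq P$ coincide, so any decider for \textbf{Axm} would decide \textbf{Cmpl}, and the reduction $\xi \mapsto P_{T,P_0,\xi}$ does land in that domain since $P_{T,P_0,\xi} \leq P_0$. The aside about needing to ``certify membership in the sub-domain effectively'' is unnecessary --- one only needs that the reduction's outputs satisfy the promise, which they do --- but this does not affect correctness.
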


\noindent Moreover, if we take in the theorem~\ref{T:main} the propositional calculus $P_0 = \{A\}$ for a $\Sigma$-formula $A$, then we obtain the undecidability of problem of derivability.

\begin{corollary}
Fix a signature $\Sigma \supseteq \{\to\}$ and a $\Sigma$-formula $A$, then the following problem is undecidable:
\begin{center}
given a $\Sigma$-calculus $P$, determine whether $P \vdash A$.
\end{center}
\end{corollary}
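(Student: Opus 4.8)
The plan is to derive the last corollary as an easy consequence of Theorem~\ref{T:main}(1). Fix a signature $\Sigma \supseteq \{\to\}$ and an arbitrary $\Sigma$-formula $A$. I want to show that the problem of deciding, for a given $\Sigma$-calculus $P$, whether $P \vdash A$, is undecidable. The natural move is to instantiate $P_0$ in Theorem~\ref{T:main}(1) by the single-axiom calculus $P_0 = \{A\}$; part~(1) then tells us that \textbf{Ext} for $\{A\}$ is undecidable, i.e.\ it is undecidable whether $\{A\} \leq P$ for a given $P$. So the whole task is to reconcile the relation ``$\{A\} \leq P$'' with the relation ``$P \vdash A$''.

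First I would unfold the definitions. By definition $\{A\} \leq P$ means $[\{A\}] \subseteq [P]$, that is, every formula derivable from the single axiom $A$ (using modus ponens and substitution) is also derivable from $P$. On the other hand $P \vdash A$ means exactly $A \in [P]$. Clearly $\{A\} \leq P$ implies $A \in [\{A\}] \subseteq [P]$, so $P \vdash A$. For the converse, suppose $P \vdash A$, i.e.\ $A \in [P]$. Since $[P]$ is closed under modus ponens and substitution (it is the set of all formulas derivable in the calculus $P$), and $[\{A\}]$ is by definition the least set containing $A$ and closed under these two rules, we get $[\{A\}] \subseteq [P]$, i.e.\ $\{A\} \leq P$. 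Hence $\{A\} \leq P \iff P \vdash A$, and the two decision problems literally coincide.

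Therefore the map $P \mapsto P$ is a (trivial) reduction of \textbf{Ext} for $P_0 = \{A\}$ to the derivability problem ``$P \vdash A$'': the undecidability of the former, guaranteed by Theorem~\ref{T:main}(1), immediately yields the undecidability of the latter. I expect no genuine obstacle here; the only point requiring a line of care is the equivalence $\{A\}\leq P \iff P\vdash A$, which rests on the closure of $[P]$ under the two inference rules and on $[\{A\}]$ being generated by $A$ under the same rules — both immediate from the definitions given in Section~2. The substantive content of the corollary is entirely carried by Theorem~\ref{T:main}(1), whose proof (the reduction from the halting problem for tag systems) is the real work and lies elsewhere in the paper.
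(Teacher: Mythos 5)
Your proposal is correct and follows exactly the paper's route: the paper likewise obtains the corollary by setting $P_0 = \{A\}$ in Theorem~\ref{T:main}(1), with the equivalence $\{A\} \leq P \iff P \vdash A$ (which you verify carefully from the closure of $[P]$ under the two rules) left implicit. No issues.
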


\noindent Particularly, this holds for a formula $A$ of the form $B \to B$ for some formula $B$ in contrast with Theorem~\ref{T:Marcinkowski}.

\section{Reduction of undecidable problems}

The typical method of proving a problem to be undecidable is a reduction of famous undecidable problem to this problem. In order to do this, it is sufficient to transform instances of an undecidable problem into instances of the new problem so that if a solution to the new problem were found, it could be used to decide the undecidable problem. Since we already know that no method can decide the old problem, no method can decide the new problem also.

\subsection{Historical survey}

One of the first problems to be proved undecidable is the halting problem of Turing machines~\cite{Turing:1937:CNA}. For example, Harrop~\cite{Harrop:64:RPPC} and Hughes~\cite{Hughes:76:TVIC} simulated Turing machines by implicational propositional calculi and reduced the halting problem to the decision problem of a partial implicational propositional calculus. Note that Hughes used only formulas contain at most two distinct variable symbols. But in some cases it is more convenient to reduce other undecidable problems.

Often decision problems for propositional calculi are associated with the word problem for semi-Thue systems. So, by a reduction of semi-Thue systems Yntema~\cite{Yntema:64} proved the undecidability of the completeness problem, Gladstone~\cite{Gladstone:65} and independently Ihrig~\cite{Ihrig:65:PLT} constructed calculi for which the problem of derivability of formulas is of any required recursively enumerable degree of unsolvability, Singletary~\cite{Singletary:68:RRA} constructed an undecidable implicational calculus, Boolman and Tapia~\cite{BollmanTapia:72:RUP} proved that it is impossible to algorithmically determine of an arbitrarily given partial propositional calculus whether or not the deduction theorem holds.

Numerous results were obtained on a simulation of Post normal system~\cite{Post:43} with the undecidable halting problem. For example, Linial and Post~\cite{LinialPost:49} noted that the undecidability of the completeness problem for the classical propositional calculus can be proved by a reduction of normal system introduced in~\cite{Post:44:RES}. In the same way Harrop~\cite{Harrop:58:EFM} proved existence of undecidable propositional calculus, Ratsa~\cite{Ratsa:89:UEP} proved the undecidability of the expressibility problem for modal logics. Recently, Zolin~\cite{Zolin:2013} obtained the Kuznetsov's results by a reduction of tag systems, i.e., a simple form of Post normal systems. A reduction of the halting problem of tag systems has been proposed by Bokov~\cite{Bokov:2009} for a proof of the Linial and Post theorem and improved in~\cite{Bokov:2015:URA} for a proof of the undecidability of some recognizing problems for propositional calculi with implication.

The above results are combined by using the halting condition of some computational machine such as Turing machine, semi-Thue system, or Post normal system. Another example of these machines is counter machines such as Minsky machines~\cite{Minsky:67:CFI}. Chagrov~\cite{Chagrov:97:ML} used Minsky machines to prove the undecidability of some problems of modal logics.

Nevertheless, there are reductions of other undecidable problems, not only the halting problem of some computational machine. So, Kuznetsov~\cite{Kuznetsov:63} devised a special calculus of primitive recursive functions, Marcinkowski~\cite{Marcinkowski:94} investigated the entailment problem for first-order Horn clauses.

\subsection{General method and examples}

In this section we describe a general method of reduction for undecidable problems of propositional calculi. First, for a given propositional calculus we must to fix
\begin{itemize}
  \item \emph{a model of computation} that is equivalent in its computational power to Turing machines, such as semi-Thue systems, Post normal systems, tag systems, or Minsky machines, and
  \item \emph{a procedure of encoding} that allows to encode operations of computation used in this model and their respective costs by a formulas of the propositional calculus.
\end{itemize}
Next, we must to simulate the model of computation by the inference process of the propositional calculus.

As an example, let us consider an abstract computational machine $T$, which deals with words over a finite alphabet $\mathcal{A}$. Operations of this machine are a finite set $R$ of pairs of words over $\mathcal{A}$.

A computation of the machine $T$ on an input word $\xi$ is a sequence of words $\lambda_0 = \xi, \lambda_1, \ldots$ such that every pair $(\lambda_i, \lambda_{i+1})$ is a instance of some operation from $R$ for all $i \geq 0$. Note that computations must be deterministic. We write $\xi \stackrel{T}{\Longmapsto} \zeta$ if there is a computation $\lambda_0, \lambda_1, \ldots, \lambda_n$, $n > 0$, such that $\lambda_0 = \xi$, $\lambda_n = \zeta$.

The halting condition of the machine $T$ is a finite set $H$ of words over $\mathcal{A}$. We say that the machine $T$ halts on input $\xi$ if the computation of $T$ on $\xi$ reaches a word from $H$, i.e., $\xi \stackrel{T}{\Longmapsto} \zeta$ for some $\zeta \in H$.

Next, let us consider propositional calculi with modus ponens and substitution. Assume that we want to prove an undecidability of the following recognizing problem: fix a class of propositional calculi $\mathcal{P}$ and a propositional calculus $P_0$, whether a given calculus $P \in \mathcal{P}$ contains $P_0$, i.e., $P \geq P_0$? In order to prove the undecidability of this problem, we fix a machine $T$ with the undecidable halting problem. Next, we encode words over $\mathcal{A}$ and construct a propositional calculus for the machine $T$ such that derivations of the codes of words simulates a computation of $T$ on them.

More precisely, let $\overline{\alpha}$ be the code of a word $\alpha \in \mathcal{A}^*$, and $\overline{\xi} \to \overline{\zeta}$ the code of a operation $(\xi, \zeta) \in R$. Usually, the code of any instance of operation $(\xi, \zeta)$ can be obtained from the code $\overline{\xi} \to \overline{\zeta}$ by substitution. We must construct a propositional calculus $P_T$ such that
\begin{enumerate}
  \item the computation of machine $T$ simulates as follows:
\begin{center}
  $\xi \stackrel{T}{\Longmapsto} \zeta$ \; iff\;  $P_T,~\overline{\xi}~\vdash~\overline{\zeta}$;
\end{center}
  \item the halting condition of $T$ defines as follows:
\begin{center}
  $T$ \emph{halts on} $\xi$ \; iff \; $P_T,~\overline{\xi}~\vdash~P_0$;
\end{center}
  \item a calculus obtained from $P_T$ by adding the axiom $\overline{\xi}$ is in $\mathcal{P}$.
\end{enumerate}
Then we obtain that the problem of recognizing, whether $P \geq P_0$ for a given calculus $P \in \mathcal{P}$, is undecidable, since otherwise the halting problem for $T$ is decidable.

In next sections we describe a process of reduction of the undecidable halting problem for abstract computational machines to a recognizing problem for propositional calculi in more details. For this reason, we take the tag system introduced by Post~\cite{Post:43} as an example of computational machine and consider propositional calculi over the signature $\Sigma$ such that $\{ \to \} \subseteq \Sigma$. For a given $\Sigma$-calculus $P_0$, tag system $T$ and a word $\xi$, we will effectively construct a $\Sigma$-calculus $P_{T,P_0,\xi}$ such that $T$ halts on the input word $\xi$ if and only if $P_0 \leq P_{T,P_0,\xi}$. Then the proof of Theorem~\ref{T:main} is immediately following from the undecidability of the halting problem~\cite{Minsky:61}.

First let us recall the notion of a tag system.

\subsection{Tag systems}

Let $\mathcal{A}$ be a finite alphabet of letters $a_1, \dots, a_m$. By $\mathcal{A}^*$ denote the set of all words over $\mathcal{A}$, including the empty word. For $\alpha \in \mathcal{A}^*$, denote by $|\alpha|$ the length of the word $\alpha$.

\begin{definition}[Post,~\cite{Post:43}]
A \emph{tag system} is a triple $T = \langle \mathcal{A}, \mathcal{W}, d \rangle$, where $\mathcal{A} = \{ a_1, \dots, a_m \}$ is a finite alphabet of $m$ symbols, $\mathcal{W} = \{ \omega_1, \dots, \omega_m\} \subseteq \mathcal{A}^*$ is a set of $m$ words, and $d \in \mathbb{N}$ is a \emph{deletion number}. Each words $\omega_i$ is associated to the letters $a_i$: $a_1 \to \omega_1, \dots, a_m \to \omega_m$.
\end{definition}

We say that $T$ is applicable to a word $\alpha \in \mathcal{A}^*$ if $|\alpha| \geq d$. The application of $T$ to a word $\alpha \in \mathcal{A}^*$ is defined as follows. Examine the first letter of the word $\alpha$. If it is $a_i$ then
\begin{enumerate}
  \item remove the first $d$ letters from $\alpha$, and
  \item append to its end the word $\omega_i$.
\end{enumerate}
Perform the same operation on the resulting word, and repeat the process as long as the resulting word has $d$ or more letters. To be precise, if $\alpha = a_i \beta \gamma$, $|\beta| = d-1$, and $\gamma \in \mathcal{A}^*$, then $T$ produces the word $\gamma \omega_i$ from the word $a_i \beta \gamma$. Denote this production by $a_i \beta \gamma \stackrel{T}{\longmapsto} \gamma \omega_i$. We write $\alpha \stackrel{T}{\Longmapsto} \beta$ if there are words $\gamma_1, \dots, \gamma_n$, $n \geq 1$, such that $\alpha = \gamma_1$, $\beta = \gamma_n$, and $\gamma_i \stackrel{T}{\longmapsto} \gamma_{i+1}$ for all $1 \leq i \leq n-1$.

Define the halting problem of tag systems. We say that a tag system $T$ \emph{halts} on a word $\alpha \in \mathcal{A}^*$ and write this as $T(\alpha)\downarrow$ if there exists a word $\beta \in \mathcal{A}^*$ such that $\alpha \stackrel{T}{\Longmapsto} \beta$ and $T$ is not applicable to $\beta$, i.e. $|\beta| < d$. The \emph{halting problem} for a fixed tag system $T$ is, given any word $\alpha \in \mathcal{A}^*$, to determine whether $T$ halts on $\alpha$.

\begin{theorem}[Minsky,~\cite{Minsky:61}] \label{T:Minsky}
There is a tag system $T$ for which the halting problem is undecidable.
\end{theorem}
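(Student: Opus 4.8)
The plan is to reduce a problem already known to be algorithmically unsolvable --- the halting problem of Turing machines~\cite{Turing:1937:CNA}, or equivalently of Minsky two-register machines~\cite{Minsky:67:CFI} --- to the halting problem of a single fixed tag system. Concretely, I would fix a universal machine $M$ (a Turing machine, or a register machine) whose halting problem \emph{on a varying input} $w$ is undecidable, and construct, once and for all, a tag system $T = \langle \mathcal{A}, \mathcal{W}, d \rangle$ together with a computable encoding $w \mapsto \overline{w} \in \mathcal{A}^*$ of the initial configuration of $M$ on $w$, such that $T$ halts on $\overline{w}$ if and only if $M$ halts on $w$. Since $w \mapsto \overline{w}$ is computable, a decision procedure for ``$T(\alpha)\downarrow$'' (for arbitrary $\alpha \in \mathcal{A}^*$) would then decide ``$M$ halts on $w$'', which is impossible; hence the halting problem for the fixed tag system $T$ is undecidable.

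The core of the construction is the simulation of one step of $M$ by a bounded number of tag steps. A configuration of $M$ --- its current instruction label $\ell$ together with the contents of its registers (or, for a Turing machine, its two half-tapes, each read as a number in a fixed base) --- is encoded as a word $H_\ell\, Z_1^{n_1} Z_2^{n_2} \cdots Z_k^{n_k}$, where $\mathcal{A}$ contains one ``header'' letter $H_\ell$ for each instruction label, one ``counter'' letter $Z_i$ for each register (here $Z_i^{n_i}$ abbreviates $Z_i$ written $n_i$ times), and as many auxiliary copies of these letters as are needed to make the deletion arithmetic come out evenly. Because a tag cycle deletes a fixed block of $d$ letters from the front of the current word while appending the word $\omega$ associated with the first letter of that block, the header sitting at the front tells $T$ which instruction to run, and one chooses the appended words so that after a fixed number of tag steps the word is precisely the encoding of the successor configuration: an increment appends an extra $Z_i$-block, while a decrement-or-jump branches according to \emph{which} letter the deletion window meets right after the header --- this is how the tag system ``reads'' whether the tested register is zero --- and appends the header of the appropriate branch. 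One may even take $d = 2$, as in the Cocke--Minsky refinement, or keep a larger fixed $d$, as in Minsky's original argument; the logic is unchanged.

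The delicate part, and the main obstacle, is verifying that the simulation is \emph{faithful}: that $T$ runs forever precisely when $M$ does and stops precisely when $M$ does. Three issues need care. First, \emph{alignment}: when a deletion window of length $d$ straddles the boundary between the header and a counter block, or between two counter blocks, only the first letter of the window selects the appended word, so the block lengths and the inserted auxiliary letters must be chosen so that each group of $d$ deletions transforms a configuration encoding exactly into that of its successor. Second, \emph{non-exhaustion}: every reachable word that does not encode a halted configuration must have length $\ge d$, and in fact should stay comfortably above $d$, so that a word can drop below length $d$ only by design. Third, \emph{exact halt detection}: the unique header $H_{\mathrm{halt}}$ associated with $M$'s halt instruction must be the only one whose appended word is short enough to let the current word shrink below length $d$. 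The standard device reconciling the last two requirements is to carry a small constant amount of ``ballast'' in each encoded configuration that is never consumed except while processing $H_{\mathrm{halt}}$, whereupon the word finally becomes too short. Once this bookkeeping is settled, the equivalence ``$T$ halts on $\overline{w}$ iff $M$ halts on $w$'' follows by induction on the number of $M$-steps, which completes the proof.
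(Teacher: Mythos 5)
This theorem is not proved in the paper at all: it is imported verbatim from Minsky~\cite{Minsky:61} (with the $d=2$ refinement credited to Wang~\cite{Wang:63}), so there is no in-paper argument to compare yours against. Your outline does follow the standard route --- encode configurations of a universal Turing or counter machine as words, arrange the production words $\omega_i$ so that a bounded number of tag cycles simulates one machine step, and detect halting by letting only the halt-header shrink the word below length $d$ --- and you correctly identify the three places where such a simulation can silently fail (window alignment, premature exhaustion, and exact halt detection).

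That said, as written this is a proof plan rather than a proof. The entire mathematical content of Minsky's theorem lies in actually exhibiting the alphabet, the deletion number, and the words $\omega_i$, and then verifying the step-by-step correspondence; your text names these obligations (``the block lengths and the inserted auxiliary letters must be chosen so that\dots'', ``once this bookkeeping is settled\dots'') without discharging any of them. In particular, the zero-test for a register --- the one operation a tag system cannot perform by simply looking at the front letter, since all it ever sees is the first letter of each $d$-block --- is waved at (``branches according to which letter the deletion window meets right after the header'') but this is exactly the point where the known constructions need a nontrivial device (e.g.\ encoding a register value $n$ by a block of length proportional to $2^n$ or exploiting the parity of block lengths so that the window phase shifts on an odd-length block). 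Without that device being specified, the claimed induction on the number of machine steps cannot be carried out. So the approach is the right one, but the proof has a genuine gap: the construction it relies on is asserted to exist rather than given.
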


Moreover, Wang~\cite{Wang:63} showed that this holds even for some tag system $T$ with $d = 2$ and $1 \leq |\omega_i| \leq 3$ for all $1 \leq i \leq m$. For this reason, throughout the paper we will assume that all words $\omega_i$ are nonempty.

\subsection{Encoding of letters and words}

Let $\mathcal{A}$ be a finite set $\{a_1, \dots, a_m\}$ as above. The set of all nonempty words over $\mathcal{A}$ is denoted by $\mathcal{A}^+$. We encode letters and words over $\mathcal{A}$ as one variable $\{\to\}$-formulas. In order to simulate a tag system over alphabet $\mathcal{A}$ correctly, this encoding must be an injective function between words over $\mathcal{A}$ and their codes. As shown in~\cite{Bokov:2009}, a word-to-formula encoding is related with difficulties of derivation of a code of one word from a code of other word. Below we show that it is more convenient to encode a word as a set of formulas. Moreover, we give a one-to-one (bijective) encoding between words over $\mathcal{A}$ and their codes.

Fix a one-variable $\{\to\}$-formula $\hat{x}$. As an example of $\hat{x}$ may be the formula $x \to x$ or $x \to (x \to x)$. Note that $\hat{x}$ is an arbitrary $\{\to\}$-formula with a single variable $x$. For future use we introduce a shortcut for the following formula of two variables:
\begin{equation*}
  x \circ y := ( ( \hat{y} \to \hat{y} ) \to \hat{y} ) \to ( \hat{x} \to ( ( \hat{y} \to \hat{y} ) \to \hat{y} ) ).~\footnote{Note that $\hat{y}$ is the substitution instance of $\hat{x}$ with replacing $x$ by $y$.}
\end{equation*}
It is obvious that $x \circ y$ is a substitution instance of the axiom $x \to (y \to x)$. The following lemma is needed for the sequel.
\begin{lemma} \label{L:NonUnifiableFormula}
$x \circ y$ and $(x \circ y) \to z$ are not unifiable.
\end{lemma}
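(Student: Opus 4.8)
The plan is to unpack the definition of $x \circ y$ and show that any attempt to unify $x \circ y$ with $(x \circ y) \to z$ forces a structural contradiction. Recall that
\begin{equation*}
  x \circ y = \bigl( ( \hat{y} \to \hat{y} ) \to \hat{y} \bigr) \to \bigl( \hat{x} \to ( ( \hat{y} \to \hat{y} ) \to \hat{y} ) \bigr),
\end{equation*}
so $x \circ y$ is an implication $L \to R$ whose left subformula $L = (\hat{y}\to\hat{y})\to\hat{y}$ and right subformula $R = \hat{x}\to\bigl((\hat{y}\to\hat{y})\to\hat{y}\bigr)$ are themselves implications. The first step is to observe that any substitution instance $\sigma(x\circ y)$ is again an implication $\sigma L \to \sigma R$, and any substitution instance of $(x\circ y)\to z$ is an implication $\sigma(x\circ y) \to \sigma z$. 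If the two were unifiable by a common substitution $\theta$, then matching the antecedents would give $\theta L = \theta(x\circ y)$, i.e. $\theta\bigl((\hat y\to\hat y)\to\hat y\bigr)$ would have to equal $\theta(x\circ y)$.

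The second step is to compare the ``shapes'' of these two formulas. On the left, $(\hat y\to\hat y)\to\hat y$ has its antecedent $\hat y\to\hat y$ built purely from the single variable $y$, so after applying $\theta$ the antecedent is $\theta\hat y\to\theta\hat y$ — a formula whose top-level antecedent and consequent are \emph{equal}. On the right, $x\circ y$ has antecedent $(\hat y\to\hat y)\to\hat y$, whose top-level antecedent $\hat y\to\hat y$ and consequent $\hat y$ are \emph{not} equal (they differ in the number of occurrences of $\to$, hence in size), and this inequality is preserved under any substitution since $\theta(\hat y\to\hat y)$ is strictly larger than $\theta\hat y$. So $\theta$ applied to the left formula yields an implication whose two immediate subformulas coincide, while $\theta$ applied to $x\circ y$ yields an implication whose two immediate subformulas have different sizes — contradiction. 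Hence the antecedents cannot be unified, so the whole formulas cannot be unified.

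I would phrase the size argument via the number of occurrences of the connective $\to$: for any formula $F$ and any substitution $\theta$, the count of $\to$'s in $\theta F$ is at least the count in $F$, and $\theta(\hat y\to\hat y)$ contains strictly more $\to$'s than $\theta\hat y$ because it contains a disjoint copy of $\theta\hat y$ plus one extra $\to$. This makes the ``different sizes'' claim rigorous without case analysis on the form of $\hat x$. The main obstacle — really the only subtlety — is making sure the argument does not secretly depend on the particular choice of $\hat x$; the point is that it depends only on the fact that the antecedent of $x\circ y$ is of the form $G\to H$ with $G$ strictly larger than $H$ (here $G=\hat y\to\hat y$, $H=\hat y$), whereas the antecedent of $L=(\hat y\to\hat y)\to\hat y$ examined one level deeper is $\hat y\to\hat y$, an implication of two equal formulas. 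Everything else is bookkeeping about substitution instances being implications and substitutions not decreasing formula size.
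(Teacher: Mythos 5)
The paper gives no proof of this lemma (it is ``left to the reader''), so there is nothing to compare against; your argument is essentially correct and is the natural one. The key step --- matching antecedents twice so that an instance of $\hat{y}\to\hat{y}$, an implication with \emph{identical} immediate subformulas, must coincide with an instance of $(\hat{y}\to\hat{y})\to\hat{y}$, an implication whose immediate subformulas have \emph{different} numbers of occurrences of $\to$ --- is sound, and as you observe it works for every choice of $\hat{x}$, including $\hat{x}=x$. One repair is needed in the setup: the paper defines unifiability of $A$ and $B$ as $A^*\cap B^*\neq\emptyset$, i.e.\ $\sigma A=\tau B$ for possibly \emph{different} substitutions $\sigma,\tau$; since the two formulas share the variables $x,y$, refuting a single common $\theta$ is a strictly weaker claim. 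Fortunately your argument never uses that the substitutions agree: from $\sigma(x\circ y)=\tau((x\circ y)\to z)$ one gets $\sigma((\hat{y}\to\hat{y})\to\hat{y})=\tau(x\circ y)$ and then $\sigma\hat{y}\to\sigma\hat{y}=(\tau\hat{y}\to\tau\hat{y})\to\tau\hat{y}$, where the left side has equal halves and the right side has halves of different sizes by your $\to$-count; so the proof goes through verbatim with two substitutions, and you should phrase it that way. Finally, in your summary sentence the second formula in the comparison should be the \emph{antecedent} of $x\circ y$ (namely $(\hat{y}\to\hat{y})\to\hat{y}$), not $x\circ y$ itself; your closing paragraph states the intended comparison correctly.
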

\noindent The proof is straightforward and left to the reader.

Now we define the notion of code of a letter. First, let us fix a unique variable $p$. Then the \emph{code} of a letter $a_i \in \mathcal{A}$, for $1 \leq i \leq m$, is a formula:
\begin{equation} \label{F:CodeOfLetter}
  \overline{a_i} := \big(\underbrace{p \to (p \to \dots (p}_i \to p ))\big) \circ p.
\end{equation}
Since $x \circ y$ is a substitution instance of the axiom $x \to (y \to x)$, we have the following lemma.

\begin{lemma} \label{L:DerivabilityOfCodeOfLetter}
$x \to (y \to x) \vdash \overline{a}$, for every letter $a \in \mathcal{A}$.
\end{lemma}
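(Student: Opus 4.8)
\textbf{Proof proposal for Lemma~\ref{L:DerivabilityOfCodeOfLetter}.}

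The plan is to observe that the code $\overline{a_i}$ is, by its very definition in~\eqref{F:CodeOfLetter}, a substitution instance of the axiom $x \to (y \to x)$, and then simply apply the substitution rule. First I would recall that $x \circ y$ was introduced as the formula $((\hat{y} \to \hat{y}) \to \hat{y}) \to (\hat{x} \to ((\hat{y} \to \hat{y}) \to \hat{y}))$, and the excerpt already notes that $x \circ y$ is a substitution instance of $x \to (y \to x)$: indeed, sending $x \mapsto \hat{x}$ and $y \mapsto (\hat{y}\to\hat{y})\to\hat{y}$ in $x \to (y \to x)$ yields exactly $x \circ y$ (after renaming, since $\hat{x}$ and $\hat{y}$ are themselves one-variable formulas in $x$ and $y$ respectively). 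Hence $x \to (y \to x) \vdash x \circ y$ by a single application of the substitution rule.

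Next I would compose this with a further substitution. By definition $\overline{a_i} = C_i \circ p$, where $C_i$ is the formula $\underbrace{p \to (p \to \cdots (p}_{i} \to p))$ (a one-variable formula in $p$), so $\overline{a_i}$ is obtained from $x \circ y$ by the substitution $x \mapsto C_i$, $y \mapsto p$ — more carefully, one substitutes into the underlying two-variable template so that $\hat{x}$ becomes $\widehat{C_i}$ and $\hat{y}$ becomes $\hat{p}$, which is precisely the formula displayed in~\eqref{F:CodeOfLetter}. Since the substitution rule composes, from $x \to (y \to x) \vdash x \circ y$ we get $x \to (y \to x) \vdash \overline{a_i}$. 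As $a_i$ was an arbitrary letter of $\mathcal{A}$, this gives $x \to (y \to x) \vdash \overline{a}$ for every $a \in \mathcal{A}$.

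There is essentially no obstacle here: the only thing to be careful about is the bookkeeping of nested substitutions, namely that substituting into $x \circ y$ is legitimate because $x \circ y$ is already a theorem of $\{x \to (y \to x)\}$, and that the result of the substitution really is the formula written in~\eqref{F:CodeOfLetter} once one unfolds the abbreviation $\circ$ and the hat notation. This is exactly the routine verification the excerpt anticipates when it writes ``we have the following lemma,'' so the proof is a one-line appeal to closure of $[\,\{x \to (y \to x)\}\,]$ under substitution.
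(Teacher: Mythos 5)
Your proof is correct and is exactly the paper's (implicit) argument: the paper offers no proof beyond the remark that $x \circ y$, and hence $\overline{a_i} = C_i \circ p$, is a substitution instance of $x \to (y \to x)$, so derivability follows from one application of the substitution rule. One small bookkeeping slip: to obtain $x \circ y$ from $x \to (y \to x)$ you must send $x \mapsto (\hat{y} \to \hat{y}) \to \hat{y}$ and $y \mapsto \hat{x}$ (the reverse of what you wrote), since the repeated antecedent/consequent in $x \to (y \to x)$ is the variable $x$ and it must match the repeated subformula $(\hat{y} \to \hat{y}) \to \hat{y}$.
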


In order to encode a word, i.e. finite sequence of letter, we must define an operation of concatenation for letters. For this reason, we introduce a shortcut $x \cdot y$ as an abbreviation for the formula $((x \to x) \to x) \circ y$. Thus we come to the following definition.

\begin{definition} (Zolin,~\cite{Zolin:2013}) \label{D:AlphabeticFormula}
An \emph{alphabetic formula} over the alphabet $\mathcal{A}$, or an $\mathcal{A}$-\emph{formula} for short, is an arbitrary $\{\cdot\}$-formula over the codes of letters from $\mathcal{A}$. Formally, $\overline{a}$ is a $\mathcal{A}$-formula for each letter $a \in \mathcal{A}$, and if $A$, $B$ are $\mathcal{A}$-formulas then so is $A \cdot B$.
\end{definition}

An example of an alphabetic formula is $(\overline{a} \cdot (\overline{c} \cdot \overline{e})) \cdot (\overline{e} \cdot \overline{a})$. It is easily seen that every $\mathcal{A}$-formula is associated with a word over $\mathcal{A}$. To every $\mathcal{A}$-formula $A$ we associate its $word(A) \in \mathcal{A}^+$ by induction: $word(\overline{a}) := a$ for each letter $a \in \mathcal{A}$, and $word(A \cdot B)$ := $word(A)word(B)$. For example, the $\mathcal{A}$-formulas $(\overline{a} \cdot \overline{e}) \cdot (\overline{c} \cdot \overline{a})$ and $(\overline{a} \cdot (\overline{e} \cdot \overline{c})) \cdot \overline{a}$ are associated with the same word $aeca$.

Let us introduce some notation that will be useful later. Given a formula $A$, denote by $A^*$ the set of all substitution instances of $A$. Similarly, given a set $M$ of formulas, denote by $M^*$ the set
\begin{equation*}
  M^* := \bigcup_{A \in M} A^*.
\end{equation*}
We call two formulas $A$ and $B$ \emph{unifiable} if $A^* \cap B^* \neq \emptyset$. For example, formulas $x \to (y \to z)$ and $(y \to z) \to x$ are unifiable, but formulas $x \to (y \to x)$ and $(y \to x) \to x$ are not unifiable.

\begin{lemma} \label{L:AlphabeticFormulas}
No two distinct $\mathcal{A}$-formulas are unifiable.
\end{lemma}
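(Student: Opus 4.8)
The plan is to recover an $\mathcal{A}$-formula uniquely from the outermost syntactic shape of any one of its substitution instances, and to propagate this recovery down the $\cdot$-structure by induction on the size of the formula.

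First I would record the shape of an arbitrary $\mathcal{A}$-formula. By the definitions of $\circ$, of the letter codes, and of $\cdot$, every $\mathcal{A}$-formula $F$ has the form $u \circ v$ for a pair $(u,v)$ read off from $F$: either $F = \overline{a_i}$, with $v = p$ and $u = p \to (p \to \dots \to p)$ (with $i+1$ occurrences of $p$), or $F = A_1 \cdot A_2$, with $v = A_2$ and $u = (A_1 \to A_1) \to A_1$, in which case $A_1$ and $A_2$ are themselves $\mathcal{A}$-formulas and proper subformulas of $F$. Unfolding the shortcut, $u \circ v = C_v \to (\hat u \to C_v)$ where $C_v := (\hat v \to \hat v) \to \hat v$, and since substitution commutes with the connectives and with the operation $\hat{\cdot}$, one has $\sigma F = C_{\sigma v} \to (\widehat{\sigma u} \to C_{\sigma v})$ for every substitution $\sigma$. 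I would also note the elementary injectivity facts that $s \mapsto \hat s$ is injective --- because the fixed formula $\hat x$ contains at least one occurrence of $x$, so $s$ occupies a fixed position inside $\hat s$ --- and hence so is $v \mapsto C_v$.

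Next, suppose $A$ and $B$ are $\mathcal{A}$-formulas with $\sigma A = \tau B$; write $A = u_A \circ v_A$ and $B = u_B \circ v_B$ as above. Matching $C_{\sigma v_A} \to (\widehat{\sigma u_A} \to C_{\sigma v_A})$ with $C_{\tau v_B} \to (\widehat{\tau u_B} \to C_{\tau v_B})$ at the top-level implication and then at the consequent forces $C_{\sigma v_A} = C_{\tau v_B}$ and $\widehat{\sigma u_A} = \widehat{\tau u_B}$, hence, by the injectivity facts, $\sigma v_A = \tau v_B$ and $\sigma u_A = \tau u_B$. I would then run the induction, splitting on whether each of $A$ and $B$ is a letter code or a concatenation. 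If both are letter codes $\overline{a_i}$, $\overline{a_j}$, then $\sigma u_A = \tau u_B$ equates two right-nested chains, of $i+1$ copies of $\sigma p$ and $j+1$ copies of $\tau p$; peeling implications gives $\sigma p = \tau p$ and, unless $i=j$, an equation asserting that a formula equals a strictly larger formula built from it, which is impossible by a symbol count --- so $i=j$ and $A=B$. If exactly one is a concatenation, say $A = \overline{a_i}$ and $B = B_1 \cdot B_2$, then $\sigma u_A = \tau u_B$ reads $\sigma p \to (\dots) = (\tau B_1 \to \tau B_1) \to \tau B_1$; the antecedents give $\sigma p = \tau B_1 \to \tau B_1$, whereas the consequents make $\tau B_1$ equal to the rest of the chain, whose length is at least $|\sigma p| > |\tau B_1|$ --- a contradiction, so this case does not occur. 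If both are concatenations $A_1 \cdot A_2$ and $B_1 \cdot B_2$, then $\sigma v_A = \tau v_B$ gives $\sigma A_2 = \tau B_2$ and $\sigma u_A = \tau u_B$ (after stripping the outer $(\,\cdot \to \cdot\,)\to \cdot$) gives $\sigma A_1 = \tau B_1$, so $A_1,B_1$ and $A_2,B_2$ are pairs of unifiable $\mathcal{A}$-formulas of strictly smaller size; the induction hypothesis yields $A_1=B_1$ and $A_2=B_2$, hence $A=B$.

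The step I expect to be the main obstacle is the mixed case (and, in the same spirit, the letter--letter case): the point is that one must exploit the $u$-component of the common instance, since the $v$-equation alone ($\sigma p = \tau B_2$, respectively $\sigma p = \tau p$) is always solvable and carries no information. The combinatorial fact driving these cases is only that a $\{\to\}$-formula can never be syntactically equal to a strictly larger formula constructed from it, which is a plain symbol-count argument; the remainder is bookkeeping with the definitions of $\circ$, the letter codes, $\cdot$, and the commutation of substitution with them.
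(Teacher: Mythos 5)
Your proof is correct and follows essentially the same route as the paper's: structural induction with the three-way case split (letter--letter, letter--concatenation, concatenation--concatenation), reading the components $u,v$ off the fixed shape of $x \circ y$ and reducing the base cases to the non-unifiability of the $p$-chains $C_i$ with each other and with $(x\to x)\to x$. The only difference is that you supply the symbol-count and injectivity details that the paper leaves to the reader.
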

\begin{proof}
By induction on the definition of an $\mathcal{A}$-formula $A$.

\noindent Let $A$ be the code of a letter $a_i \in \mathcal{A}$. There are two cases:

1) If $B$ is the code of a letter $a_j \in \mathcal{A}$, then $i \neq j$. Denote by $C_i$ the following formula
\begin{equation*}
  \underbrace{p \to (p \to \dots (p}_i \to p )).
\end{equation*}
Then $A$ is the formula $C_i \circ p$ and $B$ is the formula $C_j \circ p$. Since $C_i$ and $C_j$ are not unifiable for $i \neq j$, we conclude that $A$ and $B$ are not unifiable.

2) Let $B$ is a formula $B_1 \cdot B_2$ for some $\mathcal{A}$-formulas $B_1$ and $B_2$. Then $A$ is the formula $C_i \circ p$ and $B$ is the formula $((B_1 \to B_1) \to B_1) \circ B_2$. Since the formulas $C_i$ and $(x \to x) \to x$ are not unifiable for all $i$, $1 \leq i \leq m$, we have that $A$ and $B$ are not unifiable, .

Now let $A = A_1 \cdot A_2$ for some $\mathcal{A}$-formulas $A_1$ and $A_2$, so it can be assumed that $B = B_1 \cdot B_2$ for some $\mathcal{A}$-formulas $B_1$ and $B_2$. If $A$, $B$ are unifiable, then also $A_1$, $B_1$ and $A_2$, $B_2$ are unifiable. By induction hypothesis, $A_1 = B_1$ and $A_2 = B_2$. Hence, $A = B$.

This completes the proof of the lemma.
\end{proof}

Finally, we define the \emph{code} of a word $\alpha \in \mathcal{A}^+$ as the finite set $\overline{\alpha}$ consisting of all $\mathcal{A}$-formulas associated with the word $\alpha$. Formally,
\begin{equation*}
  \overline{\alpha} := \{ A \mid A \text{ is a } \mathcal{A}\text{-formula such that } word(A) = \alpha \}.
\end{equation*}

\noindent Note that the code of a letter $a \in \mathcal{A}$ is the formula defined as in~(\ref{F:CodeOfLetter}), but the code of a single-letter word $a \in \mathcal{A}^+$ is the set consisting of the code of letter $a$. Throughout this paper, we will use the same notation for the code of a letter and the code of a single-letter word.

As an example, $\{\overline{a} \cdot \overline{c}\}$ is the code of the word $ac$, $\{\overline{a} \cdot (\overline{c} \cdot \overline{e}),\ (\overline{a} \cdot \overline{c}) \cdot \overline{e}\}$ is the code of the word $ace$, and $\{\overline{a} \cdot (\overline{c} \cdot (\overline{e} \cdot \overline{c})),\ \overline{a} \cdot ((\overline{c} \cdot \overline{e}) \cdot \overline{c}),\ ((\overline{a} \cdot \overline{c}) \cdot (\overline{e} \cdot \overline{c})),\ (\overline{a} \cdot (\overline{c} \cdot \overline{e})) \cdot \overline{c},\ ((\overline{a} \cdot \overline{c}) \cdot \overline{e}) \cdot \overline{c}\}$ is the code of the word $acec$, where $a, c, e \in \mathcal{A}$.

Since every $\mathcal{A}$-formula is a substitution instance of the axiom $x \to (y \to x)$, we have the following generalization of Lemma~\ref{L:DerivabilityOfCodeOfLetter}.

\begin{lemma} \label{L:DerivabilityOfAFormulas}
$x \to (y \to x) \vdash \overline{\alpha}$, for every word $\alpha \in \mathcal{A}^+$.
\end{lemma}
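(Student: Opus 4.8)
The statement to prove is Lemma~\ref{L:DerivabilityOfAFormulas}: $x \to (y \to x) \vdash \overline{\alpha}$ for every word $\alpha \in \mathcal{A}^+$. Here $\overline{\alpha}$ is a finite \emph{set} of $\mathcal{A}$-formulas, so the claim is really that \emph{every} $\mathcal{A}$-formula $A$ with $word(A) = \alpha$ is derivable from the single axiom $x \to (y \to x)$ by modus ponens and substitution. My plan is a direct structural induction on the build-up of an $\mathcal{A}$-formula, exactly mirroring Definition~\ref{D:AlphabeticFormula}, and the whole argument reduces to a single observation: every $\mathcal{A}$-formula is a substitution instance of $x \to (y \to x)$, so derivability is immediate by one application of the substitution rule to the axiom.

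First I would recall that the connective $x \circ y$ was defined as the formula $((\hat y \to \hat y) \to \hat y) \to (\hat x \to ((\hat y \to \hat y) \to \hat y))$, which is visibly the substitution instance of $x \to (y \to x)$ obtained by sending $x \mapsto \hat x$ and $y \mapsto (\hat y \to \hat y) \to \hat y$; this is already noted in the text right after the definition of $\circ$, and it is the content of Lemma~\ref{L:DerivabilityOfCodeOfLetter}. Next, the base case: a code of a letter $\overline{a_i}$ is by~(\ref{F:CodeOfLetter}) of the form $C_i \circ p$ where $C_i$ is the one-variable formula $p \to (p \to \cdots (p \to p))$; since $C_i \circ p$ is an instance of $x \circ y$, which in turn is an instance of $x \to (y \to x)$, composing substitutions gives $\overline{a_i}$ as an instance of $x \to (y \to x)$, hence $x \to (y \to x) \vdash \overline{a_i}$ by the substitution rule. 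For the inductive step, an $\mathcal{A}$-formula $A \cdot B$ is by definition $((A \to A) \to A) \circ B$, which again has the shape $D \circ E$ and therefore is a substitution instance of $x \circ y$ and hence of $x \to (y \to x)$; so a single application of substitution to the axiom derives it outright. (One does not even need the induction hypothesis for derivability as such — every $\mathcal{A}$-formula, regardless of depth, is already an instance of $x \to (y \to x)$ because the outermost connective of an $\mathcal{A}$-formula is always a $\circ$.) Finally, since $\overline{\alpha}$ is the finite set of all such $A$ with $word(A) = \alpha$, and each is individually derivable, we conclude $x \to (y \to x) \vdash \overline{\alpha}$.

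There is essentially no obstacle here: the lemma is explicitly flagged in the text as ``the following generalization of Lemma~\ref{L:DerivabilityOfCodeOfLetter}'', and the only thing to be careful about is the bookkeeping of the substitution — verifying that the composite map sending the axiom's variables $x, y$ to the appropriate subformulas of $A \cdot B$ (namely $x \mapsto (A \to A) \to A$ and $y \mapsto B$, followed by the internal expansion hidden in the $\circ$ macro via $\hat x$) indeed yields $A \cdot B$ verbatim. This is routine once one unfolds the abbreviations $x \cdot y$ and $x \circ y$. I would therefore present the proof in two short paragraphs: one recording that every $\mathcal{A}$-formula is an instance of $x \to (y \to x)$ (by induction on Definition~\ref{D:AlphabeticFormula}, using that both $\overline{a_i} = C_i \circ p$ and $A \cdot B = ((A\to A)\to A)\circ B$ are of the form $U \circ V$, and that $U \circ V$ is always an instance of $x \to (y \to x)$), and one concluding that each member of the finite set $\overline{\alpha}$ is derivable by a single use of the substitution rule.
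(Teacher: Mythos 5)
Your proposal is correct and is exactly the paper's (unwritten) argument: the paper justifies the lemma with the single remark that every $\mathcal{A}$-formula is a substitution instance of $x \to (y \to x)$, which is precisely your observation that every $\mathcal{A}$-formula has the form $U \circ V$ and that $U \circ V$ is an instance of that axiom. One tiny bookkeeping slip: the substitution exhibiting $x \circ y$ as an instance of $x \to (y \to x)$ should send $x \mapsto (\hat y \to \hat y) \to \hat y$ and $y \mapsto \hat x$ (you wrote it with the two roles swapped), but this does not affect the argument.
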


Similarly, we call two codes $\overline{\alpha}$ and $\overline{\gamma}$ \emph{unifiable} if $\overline{\alpha}^* \cap \overline{\gamma}^* \neq \emptyset$. Lemma~\ref{L:AlphabeticFormulas} implies:
\begin{corollary}
No two distinct codes are unifiable.
\end{corollary}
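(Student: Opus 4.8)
The plan is to reduce unifiability of codes of words directly to unifiability of $\mathcal{A}$-formulas, which is already settled by Lemma~\ref{L:AlphabeticFormulas}. Recall that the code $\overline{\alpha}$ of a word $\alpha \in \mathcal{A}^+$ is the (finite) set of all $\mathcal{A}$-formulas $A$ with $word(A) = \alpha$, and that $\overline{\alpha}^* = \bigcup_{A \in \overline{\alpha}} A^*$; so the content of the corollary is that $\overline{\alpha}^* \cap \overline{\gamma}^* \neq \emptyset$ forces $\alpha = \gamma$.

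First I would unwind the hypothesis that $\overline{\alpha}$ and $\overline{\gamma}$ are unifiable, i.e. $\overline{\alpha}^* \cap \overline{\gamma}^* \neq \emptyset$, and pick a formula $C$ lying in this intersection. Since $\overline{\alpha}^*$ is a union over the members of $\overline{\alpha}$, we have $C \in A^*$ for some $\mathcal{A}$-formula $A$ with $word(A) = \alpha$, and likewise $C \in B^*$ for some $\mathcal{A}$-formula $B$ with $word(B) = \gamma$. Hence $C \in A^* \cap B^*$, so $A$ and $B$ are unifiable $\mathcal{A}$-formulas.

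Then I would invoke Lemma~\ref{L:AlphabeticFormulas}: since no two distinct $\mathcal{A}$-formulas are unifiable and $A$, $B$ are unifiable, we must have $A = B$. Consequently $\alpha = word(A) = word(B) = \gamma$, and therefore the codes $\overline{\alpha}$ and $\overline{\gamma}$ coincide; in particular two distinct codes cannot be unifiable.

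I do not expect a genuine obstacle here: the whole combinatorial substance — that two $\mathcal{A}$-formulas built from the letter-codes $\overline{a_i} = C_i \circ p$ by the connective $\cdot$ cannot share a substitution instance unless they are syntactically equal — is carried by Lemma~\ref{L:AlphabeticFormulas}, whose structural induction is already in place. The corollary is merely the bookkeeping observation that replacing a single formula by the finite set of all $\mathcal{A}$-formulas representing a word (the various parenthesizations) creates no new unifications, because any common substitution instance of the two sets is automatically a common substitution instance of two individual member formulas, one from each set.
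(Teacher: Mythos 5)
Your argument is correct and is exactly the reduction the paper intends: the paper states the corollary as an immediate consequence of Lemma~\ref{L:AlphabeticFormulas} without written proof, and your unwinding of $\overline{\alpha}^* \cap \overline{\gamma}^* \neq \emptyset$ to a common substitution instance of two individual $\mathcal{A}$-formulas, followed by $A = B$ and hence $\alpha = \gamma$, is precisely the intended bookkeeping.
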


Now we introduce the following convention. In order to simplify a notation of formulas, we will use an abbreviation $\overline{\alpha}$ for some word $\alpha \in \mathcal{A}^+$ as a part of formulas. For example, a formula $\overline{\alpha} \to x$ is a shortcut for the following set of formulas
\begin{equation*}
  \{ A \to x \mid A \in \overline{\alpha} \},
\end{equation*}
and a formula $\overline{\alpha} \cdot x \to x \cdot \overline{\beta}$ is a shortcut for the set
\begin{equation*}
  \{ (A \cdot x) \to (x \cdot B) \mid A \in \overline{\alpha},\ B \in \overline{\beta} \}.
\end{equation*}
Note that all alphabetic formulas are one-variable formulas with the same variable $p$, so we substitute the same formula in different occurrences of alphabetic formulas. As an example of this substitution let us consider a formula $A$ of the form
\begin{equation*}
\overline{\alpha}[p] \cdot x \to x \cdot \overline{\beta}[p],
\end{equation*}
where square brackets denote a dependence on variables or subformulas. Then a substitution instance of $A$ is any formula of the form
\begin{equation*}
  \overline{\alpha}[B] \cdot C \to C \cdot \overline{\beta}[B]
\end{equation*}
for some formulas $B$ and $C$.

\subsection{Simulation of tag systems}

For a given tag system $T$ we construct a propositional $\{\to\}$-calculus $P_T$ such that the derivation of codes of words in $P_T$ simulates productions of words in $T$.

Let $T = \langle \mathcal{A}, \mathcal{W}, d \rangle$, where $\mathcal{A} = \{ a_1, \dots, a_m \}$, $\mathcal{W} = \{ \omega_1, \dots, \omega_m \}$, and $d \in \mathbb{N}$. Recall that all $\omega_i$ are assumed to be nonempty. Denote by $P_T$ a $\{\to\}$-calculus with the following groups of axioms.

\medskip
\noindent \emph{Productions} of the tag system $T$:
$$
  \leqno
  \begin{aligned}
  & (\mathrm{T}_1) \quad \overline{a_i \alpha} \cdot x \to x \cdot \overline{\omega_i} \\
  & (\mathrm{T}_2) \quad \overline{a_i \alpha} \to \overline{\omega_i}
  \end{aligned}
  \quad \text{ for all } \alpha \in \mathcal{A}^+,\ |\alpha| = d-1,\ 1 \leq i \leq m;
$$

\noindent \emph{Transformation rules}:
$$
  \leqno
  \begin{aligned}
    & (\mathrm{R_1}) \quad x \cdot (y \cdot z) \to (x \cdot y) \cdot z \\
    & (\mathrm{R_2}) \quad (x \cdot y) \cdot z \to x \cdot (y \cdot z) \\
    & (\mathrm{R_3}) \quad (x \cdot (y \cdot z)) \cdot u \to ((x \cdot y) \cdot z) \cdot u \\
    & (\mathrm{R_4}) \quad ((x \cdot y) \cdot z) \cdot u \to (x \cdot (y \cdot z)) \cdot u
  \end{aligned}
$$

Define two subsystems of the calculus $P_T$:
\begin{equation*}
  \mathrm{T} := \mathrm{T}_1 \cup \mathrm{T}_2, \quad \mathrm{R} := \mathrm{R}_1 \cup \mathrm{R}_2 \cup \mathrm{R}_3 \cup \mathrm{R}_4.
\end{equation*}
Since they are rather weak and not even capable to derive $A \to C$ from $A \to B$
and $B \to C$, we introduce the following useful notation: $P \vdash A \Rightarrow B$ if and only if there are formulas $C_0 = A,\ C_2,\ \ldots,\ C_{n-1},\ C_n = B$, $n \geq 0$, such that $P \vdash C_i \to C_{i+1}$ for all $0 \leq i \leq n-1$.

Since every formula $A \cdot B$ is a substitution instance of the axiom $x \to (y \to x)$, we have the following lemma.

\begin{lemma} \label{L:Inclusion}
$P_T \leq \{x \to (y \to x)\}$.
\end{lemma}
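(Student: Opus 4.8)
\textbf{Proof plan for Lemma~\ref{L:Inclusion}.}
The claim is that every axiom of $P_T$ is derivable from the single axiom $x \to (y \to x)$ using modus ponens and substitution, and hence $[P_T] \subseteq [\{x \to (y \to x)\}]$. The plan is to show that each axiom of $P_T$ — the production axioms $(\mathrm{T}_1)$ and $(\mathrm{T}_2)$ and the transformation rules $(\mathrm{R}_1)$--$(\mathrm{R}_4)$ — is a substitution instance of $x \to (y \to x)$; then derivability is immediate by a single application of the substitution rule. The key observation, already used implicitly throughout Section~3, is that the shorthand $x \cdot y$ stands for $((x \to x) \to x) \circ y$, and $u \circ v$ is by definition $((\hat v \to \hat v) \to \hat v) \to (\hat u \to ((\hat v \to \hat v) \to \hat v))$, which has the shape $E \to (F \to E)$ — i.e. it is a substitution instance of $x \to (y \to x)$. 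So every formula of the form $A \cdot B$, regardless of what $A$ and $B$ are, is a substitution instance of $x \to (y \to x)$.

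The main (and essentially only) step is therefore to verify that every axiom of $P_T$ is syntactically of the form $A \cdot B$ for suitable formulas $A, B$. For $(\mathrm{R}_1)$--$(\mathrm{R}_4)$ this is visible on inspection, since each is literally written as $(\text{something}) \to (\text{something})$ built at the top level by the connective $\cdot$: for instance $(\mathrm{R}_1)$ is $(x\cdot(y\cdot z)) \cdot z' \to \cdots$? — more carefully, reading the definition of $\cdot$, the outermost connective of $A \cdot B$ is the implication coming from $((A\to A)\to A)\circ B$, and comparing with $x\to(y\to x)$ one reads off the substitution $x \mapsto ((\hat B \to \hat B)\to \hat B)$, $y \mapsto \widehat{((A\to A)\to A)}$. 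For $(\mathrm{T}_1)$, recall the convention that $\overline{a_i\alpha}\cdot x \to x\cdot\overline{\omega_i}$ abbreviates the set of formulas $(C \cdot x) \to (x \cdot D)$ with $C \in \overline{a_i\alpha}$, $D \in \overline{\omega_i}$; each such formula is again of the form $A \cdot B$ with $A = C\cdot x$ and $B = x \cdot D$, and similarly $(\mathrm{T}_2)$ abbreviates formulas $C \to D$ with $C \in \overline{a_i\alpha}$, $D\in\overline{\omega_i}$ — but here $C$ is an $\mathcal{A}$-formula, which (being built from codes of letters via $\cdot$, or being a single code $\overline{a}$, itself of the form $(\cdots)\circ p$) is once more a substitution instance of $x\to(y\to x)$, and $C \to D$ is a substitution instance obtained by substituting $D$ appropriately; alternatively one notes $C$ alone is already such an instance and $C\to D$ need not be — so the cleanest uniform argument is: every $\mathcal{A}$-formula is a substitution instance of $x\to(y\to x)$ (this is stated just before Lemma~\ref{L:DerivabilityOfAFormulas}), hence each formula in the code $\overline{a_i\alpha}$ is derivable from $x\to(y\to x)$, and each member of $(\mathrm{T}_2)$, having the form $C \to D$ where $D$ is itself an $\mathcal{A}$-formula hence of the form $(\cdots)\circ p = (\cdots)\to((\cdots)\to(\cdots))$, is a substitution instance of $x \to (y \to x)$.

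Once it is established that every axiom of $P_T$ lies in $\{x \to (y \to x)\}^*$, the lemma follows: any derivation in $P_T$ can be mimicked in $\{x\to(y\to x)\}$ by prefixing, for each axiom used, its one-step derivation via substitution from $x\to(y\to x)$, and then copying the remaining modus ponens and substitution steps verbatim. Hence $[P_T]\subseteq[\{x\to(y\to x)\}]$, i.e. $P_T \le \{x\to(y\to x)\}$. I do not expect any genuine obstacle here: the only thing to be careful about is the bookkeeping with the set-valued abbreviations $\overline\alpha$ and the convention that the same formula must be substituted into all occurrences of an alphabetic subformula, but since $x\to(y\to x)$ is matched by $A\cdot B$ for \emph{arbitrary} $A,B$, these constraints are automatically met and no case splits are needed.
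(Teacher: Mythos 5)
Your central claim --- that every axiom of $P_T$ is itself a substitution instance of $x \to (y \to x)$, so that a single application of the substitution rule suffices --- is false, and this is a genuine gap rather than a bookkeeping issue. The top-level connective of each axiom of $P_T$ is a bare implication, not an occurrence of $\cdot$: the axiom $(\mathrm{R}_1)$ is the formula $x \cdot (y \cdot z) \to (x\cdot y)\cdot z$, whose main connective is the displayed $\to$, and this formula is \emph{not} of the form $A \cdot B$ (unfolding $A\cdot B = ((A\to A)\to A)\circ B$ yields a formula of a quite different and much larger shape than $E \to F$). Likewise a member $C \to D$ of $(\mathrm{T}_2)$ is a substitution instance of $x\to(y\to x)$ only if $D$ has the form $G \to C$ with consequent \emph{syntactically identical} to $C$; knowing merely that $D$ has the shape $P \to (Q\to P)$ for some $P,Q$ does not give this, and in general it fails (already for $(\mathrm{R}_2)$ one would need $\widehat{(x\to x)\to x}$ to coincide with $(\widehat{y\cdot z}\to \widehat{y\cdot z})\to\widehat{y\cdot z}$, which it does not). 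You half-notice the difficulty for $(\mathrm{T}_2)$ (``$C\to D$ need not be'') but then assert the false statement anyway.

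The repair is short and is what the paper's one-line justification is actually pointing at: every axiom of $P_T$ has the form $E \to F$ where the \emph{consequent} $F$ is either of the form $A\cdot B$ or an alphabetic formula (a code of a letter being an instance of $x\circ y$), hence in every case a substitution instance of $x \to (y\to x)$; here one uses that the words $\omega_i$ are assumed nonempty, so the consequents of $(\mathrm{T}_2)$ are genuine alphabetic formulas. One then derives $F$ by substitution from $x\to(y\to x)$, derives $F \to (E \to F)$ as another substitution instance of $x\to(y\to x)$, and obtains $E \to F$ by modus ponens. So each axiom of $P_T$ is derivable in three steps, not one, and a modus ponens application is essential; with that correction the remainder of your argument (deriving the axioms first and then copying any $P_T$-derivation) goes through.
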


Now we prove some properties of the calculus $P_T$.

\subsubsection{Derivability of the $T$-productions}

Here we show that the calculus $P_T$ can ``simulate'' productions of the tag system $T$. At the beginning let us prove auxiliary lemmas.

\begin{lemma} \label{L:CodeDerivation}
$\mathrm{R} \vdash A \Rightarrow \overline{\alpha}$, for all $\alpha \in \mathcal{A}^+$ and $A \in \overline{\alpha}$.
\end{lemma}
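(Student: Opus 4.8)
The plan is to prove the statement by induction on the structure of the $\mathcal{A}$-formula $A \in \overline{\alpha}$, using the rewriting-style relation $P \vdash A \Rightarrow B$ introduced just before the lemma. Since $\overline{\alpha}$ is the set of all $\mathcal{A}$-formulas $A$ with $word(A) = \alpha$, one should fix a canonical representative in $\overline{\alpha}$ — say the left-associated one $\overline{\alpha}_0 = (\cdots((\overline{a_1} \cdot \overline{a_2}) \cdot \overline{a_3}) \cdots) \cdot \overline{a_k}$ where $\alpha = a_1 a_2 \cdots a_k$ — and show that every $A \in \overline{\alpha}$ satisfies $\mathrm{R} \vdash A \Rightarrow \overline{\alpha}_0$. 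Because $\overline{\alpha}_0 \in \overline{\alpha}$, this gives exactly the claim; note the target $\overline{\alpha}$ appearing in the lemma is, by the notational convention for codes-inside-formulas, really shorthand for a member of the set, so it suffices to reach one fixed member.

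First I would record the base case: if $\alpha$ is a single letter $a$, then $\overline{a}$ is the only $\mathcal{A}$-formula associated with it, so $A = \overline{a} = \overline{\alpha}_0$ and the empty chain ($n = 0$) works. For the inductive step, take $A = A_1 \cdot A_2$ with $word(A_1) = \beta$, $word(A_2) = \gamma$, $\beta\gamma = \alpha$. The idea is to ``rotate'' all the parenthesization to the left using $(\mathrm{R}_1)$, which gives substitution instances $x \cdot (y \cdot z) \to (x \cdot y) \cdot z$, and its ``under an extra context'' companion $(\mathrm{R}_3)$, $(x \cdot (y \cdot z)) \cdot u \to ((x \cdot y) \cdot z) \cdot u$. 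Concretely: by the induction hypothesis applied to $A_2 \in \overline{\gamma}$ we have $\mathrm{R} \vdash A_2 \Rightarrow \overline{\gamma}_0$; each step of that chain can be lifted to a step on $A_1 \cdot (\,\cdot\,)$ by substituting into $(\mathrm{R}_1)$/$(\mathrm{R}_3)$ appropriately — wait, more carefully, one lifts a rewrite $C \to D$ occurring inside the right component to a rewrite $A_1 \cdot C \to A_1 \cdot D$; this is not literally an instance of a single axiom, so instead I would first push $A_2$'s internal structure out using $(\mathrm{R}_1)$ repeatedly at the top level. The cleanest route: show by a sub-induction on $|\gamma|$ that $\mathrm{R} \vdash A_1 \cdot A_2 \Rightarrow (A_1 \cdot A_2') \cdot \overline{a}$ where $A_2 = A_2' \cdot \overline{a}$ at the top (if $A_2$ is itself a product) via one application of $(\mathrm{R}_1)$, then recurse inside the new left component — which is now governed by $(\mathrm{R}_3)$ since it sits to the left of $\cdot\,\overline{a}$. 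Iterating, $A_1 \cdot A_2$ reduces to a formula all of whose top-level structure is left-associated; then apply the induction hypothesis / sub-induction to $A_1$ similarly (its context being $\cdot\,(\text{stuff})$ on the right, handled by $(\mathrm{R}_1)$ and $(\mathrm{R}_3)$), arriving at $\overline{\alpha}_0$. Since $\Rightarrow$ is visibly transitive (concatenate the chains $C_0 \to \cdots \to C_n$), composing these pieces finishes the step.

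The main obstacle is bookkeeping: the relation $\Rightarrow$ built from $\mathrm{R}$ only lets one rewrite at the very top of a formula (each $C_i \to C_{i+1}$ must be a substitution instance of one of $\mathrm{R}_1$–$\mathrm{R}_4$, i.e. an instance of $P_T$-derivable $C \to D$, and $P_T$ cannot chain implications internally), so ``rewrite a subterm'' is not for free — this is precisely why $(\mathrm{R}_3)$ and $(\mathrm{R}_4)$ are present alongside $(\mathrm{R}_1)$, $(\mathrm{R}_2)$: they supply exactly the one layer of context $(\,\cdot\,) \cdot u$ needed to reassociate inside the left argument of a top-level product. So the real content of the proof is the careful claim that any parenthesization of a word can be brought to the left-associated normal form using only top-level reassociations of depth at most one — equivalently, that the rewrite system on binary trees with root-level rules $(\mathrm{R}_1)$ and the depth-one rules $(\mathrm{R}_3)$ suffices to reach the left comb. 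I would prove this by a double induction (outer on word length, inner on the number of right-branching nodes at the top-left spine), each step peeling off the rightmost leaf with $(\mathrm{R}_1)$ and then recursing into the left subtree under the $\cdot\,\overline{a}$ context via $(\mathrm{R}_3)$; termination/well-foundedness of this process is what makes the finite chain exist. The symmetric rules $(\mathrm{R}_2)$, $(\mathrm{R}_4)$ are not needed here but will presumably be used in the converse direction elsewhere.
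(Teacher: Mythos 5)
There is a genuine gap in the rewriting strategy. You correctly identify the central difficulty — each step $C_i \to C_{i+1}$ must be a substitution instance of a single axiom, so $\mathrm{R}$ only lets you reassociate at the root (via $(\mathrm{R}_1)$, $(\mathrm{R}_2)$) or at the \emph{left child} of the root (via $(\mathrm{R}_3)$, $(\mathrm{R}_4)$) — but your strategy does not actually overcome it. After you peel off the rightmost leaf at the top level and then reassociate the new left component under the context $(\;)\cdot\overline{a}$ using $(\mathrm{R}_3)$, the next recursion step places the still-unnormalized subtree at depth two, e.g.\ in a context of the form $\bigl((\;)\cdot\overline{a_{k-1}}\bigr)\cdot\overline{a_k}$, where no axiom of $\mathrm{R}$ applies (already $\bigl(\bigl(\overline{a}\cdot(\overline{b}\cdot\overline{c})\bigr)\cdot\overline{d}\bigr)\cdot\overline{e}$ admits no $(\mathrm{R}_3)$ step toward the left comb). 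So ``iterating'' does not terminate in the left-associated normal form. The paper's proof is organized precisely to avoid this: it normalizes toward the \emph{right}-associated formula $\overrightarrow{\alpha}$ and maintains the invariant that the already-built right comb is the top-level \emph{right} factor while everything still to be processed is the top-level \emph{left} factor; one letter is transferred per round by finitely many $(\mathrm{R}_3)$ steps followed by one $(\mathrm{R}_2)$ step, so every rewrite happens at depth at most one. The left comb is in fact reachable, but only by going through the right comb (and then applying $(\mathrm{R}_1)$ repeatedly), not by your direct descent.

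A second, smaller issue: your reading that it ``suffices to reach one fixed member'' of $\overline{\alpha}$ is too weak. As the lemma is used later (e.g.\ in Lemma~\ref{L:Derivability}, to pass from an arbitrary element of $\overline{\xi}$ to the particular element $\overline{a_i\alpha}\cdot\overline{\beta}$ needed for axiom $(\mathrm{T}_1)$), one needs that every $A\in\overline{\alpha}$ reaches \emph{every} $B\in\overline{\alpha}$. The paper gets this from the observation that $\mathrm{R}$ is closed under inverses: once every $A$ reaches $\overrightarrow{\alpha}$, reversing chains shows $\overrightarrow{\alpha}$ reaches every $B$, and transitivity of $\Rightarrow$ does the rest. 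Your closing remark that $(\mathrm{R}_2)$ and $(\mathrm{R}_4)$ ``are not needed here'' is therefore mistaken on both counts: $(\mathrm{R}_2)$ is used in the forward normalization itself, and the invertibility supplied by the paired rules is exactly what upgrades ``reach one canonical representative'' to the full statement.
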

\begin{proof}
Let $\alpha = a_1 \ldots a_n$. Since all axioms in $\mathrm{R}$ are invertible, i.e., $B \to A \in \mathrm{R}$ whenever $A \to B \in \mathrm{R}$, it is sufficient to prove that
\begin{equation*}
  \mathrm{R} \vdash A \Rightarrow \overrightarrow{\alpha},
\end{equation*}
where $\overrightarrow{\alpha}$ is the following formula $\overline{a_1} \cdot (\overline{a_2} \cdot \ldots \cdot (\overline{a_{n-1}} \cdot \overline{a_n}))$.

Without loss of generality it can be assumed that $n \geq 3$. We split up the proof into two steps. First, we will show that there exists an alphabetic formula $B$ such that
\begin{equation*}
  \mathrm{R} \vdash A \Rightarrow B \cdot \overrightarrow{\xi},
\end{equation*}
where $\xi \in \mathcal{A}^+$ and $\alpha = \beta \xi$, for $\beta = word(B)$. Next, we will prove that
\begin{equation*}
  \mathrm{R} \vdash B \cdot \overrightarrow{\xi} \Rightarrow \overrightarrow{\beta \xi}
\end{equation*}
by induction on $|\beta|$.

\textbf{Proof of the first step:} Since $n \geq 3$, there is an integer $k \geq 1$, nonempty words $\alpha_1, \ldots, \alpha_k, \xi$ and alphabetic formulas $A_1, \ldots, A_k$ such that $\alpha = \alpha_1 \ldots \alpha_k \xi$,
\begin{equation*}
  A = A_1 \cdot (A_2 \cdot \ldots \cdot ( A_k \cdot \overrightarrow{\xi})),
\end{equation*}
and $word(A_i) = \alpha_i$, for $1 \leq i \leq k$. Denote by $B$ the following formula $(((A_1 \cdot A_2) \cdot A_3) \cdot \ldots \cdot A_k)$, then we have
\begin{equation*}
  \mathrm{R} \vdash A \Rightarrow B \cdot \overrightarrow{\xi}
\end{equation*}
by a multiple application of axiom $\mathrm{(R_1)}$.

\textbf{Proof of the second step:} By induction on $|\beta|$, where $\beta = word(B)$. If $|\beta| = 1$, then the formulas $B \cdot \overrightarrow{\xi}$ and $\overrightarrow{\beta \xi}$ are identical.

Now let $|\beta| \geq 2$, so there is an integer $m \geq 1$, a letter $a \in \mathcal{A}$, and nonempty words $\beta_1, \ldots, \beta_m$ such that $\beta = \beta_1 \ldots \beta_m a$ and
\begin{equation*}
  B = B_1 \cdot (B_2 \cdot \ldots \cdot (B_m \cdot \overline{a})),
\end{equation*}
for some alphabetic formulas $B_1, \ldots, B_m$ such that $word(B_i) = \beta_i$, $1 \leq i \leq m$. Denote by $C$ the following formula $(((B_1 \cdot B_2) \cdot B_3) \ldots \cdot B_m)$. Then we derive in $R$:
\begin{equation*}
  B \cdot \overrightarrow{\xi} \quad \stackrel{\mathrm{(R_3)}}{\Longrightarrow} \quad
  (C \cdot \overline{a}) \cdot \overrightarrow{\xi} \quad \stackrel{\mathrm{(R_2)}}{\longrightarrow} \quad
  C \cdot \overrightarrow{a \xi} \quad \stackrel{\mathrm{IH}}{\Longrightarrow} \quad
  \overrightarrow{\beta \xi}
\end{equation*}
where the first derivation is a multiple application of axiom $\mathrm{(R_3)}$, the second derivation is a single application of axiom $\mathrm{(R_2)}$, and the last derivation uses induction hypothesis for the word $\gamma$ such that $\gamma =  word(C)$. Note that $\gamma$ is exactly the word $\beta_1 \ldots \beta_m$. The lemma is proved.
\end{proof}

\begin{lemma} \label{L:Derivability}
If $\xi \stackrel{T}{\longmapsto} \zeta$ then $P_T \vdash \overline{\xi} \Rightarrow \overline{\zeta}$, for all $\xi, \zeta \in \mathcal{A}^+$.
\end{lemma}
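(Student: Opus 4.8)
The plan is to reduce the claim entirely to Lemma~\ref{L:CodeDerivation}. Recall that $P_T\vdash\overline\xi\Rightarrow\overline\zeta$ abbreviates ``$P_T\vdash A\Rightarrow B$ for every $A\in\overline\xi$ and every $B\in\overline\zeta$'', and that by Lemma~\ref{L:CodeDerivation} (together with $\mathrm{R}\subseteq P_T$) we may already pass, using only $\mathrm{R}$, from any representative of the code of a word to any other: $P_T\vdash A\Rightarrow B$ whenever $A,B\in\overline\alpha$ for one and the same $\alpha\in\mathcal{A}^+$. As in the proof of Lemma~\ref{L:CodeDerivation}, write $\overrightarrow\alpha$ for the right-associated code $\overline{a_1}\cdot(\overline{a_2}\cdot(\cdots\cdot(\overline{a_{n-1}}\cdot\overline{a_n})))$ of a word $\alpha=a_1\cdots a_n$. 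Because of this freedom it suffices, for each fixed $A\in\overline\xi$, to exhibit a single derivation $A\Rightarrow B_0$ with $B_0\in\overline\zeta$; reaching an arbitrary $B\in\overline\zeta$ from $B_0$, as well as starting from a convenient canonical representative of $\xi$, is then taken care of by Lemma~\ref{L:CodeDerivation}. Thus a production of $T$ will be simulated by a single application of one $\mathrm{T}$-axiom, flanked on both sides by appeals to Lemma~\ref{L:CodeDerivation}.

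Concretely, assume $\xi\stackrel{T}{\longmapsto}\zeta$, say $\xi=a_i\beta\gamma$ with $|\beta|=d-1$ and $\gamma\in\mathcal{A}^*$, so that $\zeta=\gamma\omega_i$; as we may (by Wang's theorem) we assume $d\ge 2$, hence $\beta\in\mathcal{A}^+$. Fix $A\in\overline\xi$. Suppose first $\gamma\in\mathcal{A}^+$. Then $\overrightarrow{a_i\beta}\in\overline{a_i\beta}$ and $\overrightarrow\gamma\in\overline\gamma$, and $(\overrightarrow{a_i\beta})\cdot\overrightarrow\gamma$ is an $\mathcal{A}$-formula with associated word $a_i\beta\gamma=\xi$, hence an element of $\overline\xi$; by Lemma~\ref{L:CodeDerivation}, $P_T\vdash A\Rightarrow(\overrightarrow{a_i\beta})\cdot\overrightarrow\gamma$. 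Since $\overrightarrow{a_i\beta}\in\overline{a_i\beta}$ and $\overrightarrow{\omega_i}\in\overline{\omega_i}$, the formula $(\overrightarrow{a_i\beta}\cdot x)\to(x\cdot\overrightarrow{\omega_i})$ is among the axioms $(\mathrm{T}_1)$ (taken with $\alpha:=\beta$); substituting $x\mapsto\overrightarrow\gamma$ — which leaves the one-variable alphabetic subformulas untouched and is therefore a legitimate substitution — gives $P_T\vdash(\overrightarrow{a_i\beta})\cdot\overrightarrow\gamma\to\overrightarrow\gamma\cdot\overrightarrow{\omega_i}$. Finally $\overrightarrow\gamma\cdot\overrightarrow{\omega_i}$ is an $\mathcal{A}$-formula with associated word $\gamma\omega_i=\zeta$, hence lies in $\overline\zeta$, so Lemma~\ref{L:CodeDerivation} yields $P_T\vdash\overrightarrow\gamma\cdot\overrightarrow{\omega_i}\Rightarrow B$ for each $B\in\overline\zeta$. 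Concatenating these three derivations gives $P_T\vdash A\Rightarrow B$ for every $B\in\overline\zeta$, i.e.\ the desired $P_T\vdash\overline\xi\Rightarrow\overline\zeta$.

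It remains to treat $\gamma=\varepsilon$, where $\xi=a_i\beta$ with $|a_i\beta|=d$ and $\zeta=\omega_i$. Here, since $A\in\overline{a_i\beta}$, axiom $(\mathrm{T}_2)$ with $\alpha:=\beta$ already contains the formula $A\to B'$ for every $B'\in\overline{\omega_i}=\overline\zeta$; hence $P_T\vdash A\to B'$, and composing with Lemma~\ref{L:CodeDerivation} to reach an arbitrary $B\in\overline\zeta$ from $B'$ once more gives $P_T\vdash\overline\xi\Rightarrow\overline\zeta$.

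I do not expect a genuine obstacle here: the real content of the simulation is already encapsulated in Lemma~\ref{L:CodeDerivation}, and what is left is the case split (deletion of part of the input versus deletion of all of it, selecting axiom $(\mathrm{T}_1)$ or $(\mathrm{T}_2)$) together with the bookkeeping that the intermediate formulas $(\overrightarrow{a_i\beta})\cdot\overrightarrow\gamma$ and $\overrightarrow\gamma\cdot\overrightarrow{\omega_i}$ are bona fide members of $\overline\xi$ and $\overline\zeta$, so that Lemma~\ref{L:CodeDerivation} applies at both ends. The converse implication (not asked for here) is where the subtler unification facts, Lemmas~\ref{L:NonUnifiableFormula}--\ref{L:AlphabeticFormulas}, will be needed.
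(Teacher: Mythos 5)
Your proof is correct and follows essentially the same route as the paper's: split on whether the残 remainder $\gamma$ (the paper's $\beta$) is empty, use Lemma~\ref{L:CodeDerivation} to re-associate into the form $\overline{a_i\beta}\cdot\overline{\gamma}$, apply a single substitution instance of $(\mathrm{T}_1)$ (or $(\mathrm{T}_2)$ in the empty case), and re-associate again. Your version merely makes explicit the choice of canonical representatives $\overrightarrow{\alpha}$ and the legitimacy of the substitution $x\mapsto\overrightarrow{\gamma}$, which the paper leaves implicit in its set-of-formulas shorthand.
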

\begin{proof}
Since $T$ is applicable to $\xi$, we have $|\xi| \geq d$. Therefore, $\xi = a_i \alpha \beta$ and $\zeta = \beta \omega_i$, where $|\alpha| = d-1$ and $|\beta| \geq 0$.

If $|\beta| = 0$, then $P_T \vdash \overline{\xi} \to \overline{\zeta}$ by the axiom $(\mathrm{T}_2)$.

Let $|\beta| > 0$, so we derive in $P_T$:
\begin{equation*}
  \overline{\xi} \quad \stackrel{\mathrm{L}}{\Longrightarrow} \quad
  \overline{a_i \alpha} \cdot \overline{\beta} \quad \stackrel{\mathrm{(T_1)}}{\longrightarrow} \quad
  \overline{\beta} \cdot \overline{\omega_i} \quad \stackrel{\mathrm{L}}{\Longrightarrow} \quad
  \overline{\zeta}
\end{equation*}
where the first and last derivations are due to Lemma~\ref{L:CodeDerivation}, and the second derivation is the substitution instance of the axiom $\mathrm{(T_1)}$. The lemma is proved.
\end{proof}

\begin{corollary} \label{C:Derivability}
If $\xi \stackrel{T}{\Longmapsto} \zeta$ then $P_T \vdash \overline{\xi} \Rightarrow \overline{\zeta}$, for all $\xi, \zeta \in \mathcal{A}^+$.
\end{corollary}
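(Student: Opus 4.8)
The plan is to obtain Corollary~\ref{C:Derivability} from Lemma~\ref{L:Derivability} by a routine induction on the length of the $T$-computation, together with the transitivity of the relation $\Rightarrow$. Recall that $\xi \stackrel{T}{\Longmapsto} \zeta$ means there are words $\gamma_1, \dots, \gamma_n$ with $n \geq 1$, $\gamma_1 = \xi$, $\gamma_n = \zeta$, and $\gamma_i \stackrel{T}{\longmapsto} \gamma_{i+1}$ for all $1 \leq i \leq n-1$. I would argue by induction on $n$.

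For the base case $n = 1$ we have $\xi = \zeta$, so the assertion $P_T \vdash \overline{\xi} \Rightarrow \overline{\zeta}$ is just $P_T \vdash \overline{\xi} \Rightarrow \overline{\xi}$; reading this (as in the proof of Lemma~\ref{L:Derivability}) as $P_T \vdash A \Rightarrow B$ for all $A, B \in \overline{\xi}$, it is exactly the content of Lemma~\ref{L:CodeDerivation} combined with the invertibility of the axioms of $\mathrm{R}$, which let us pass from any member of the code $\overline{\xi}$ to any other. For the inductive step, split the computation as $\xi = \gamma_1, \dots, \gamma_{n-1}, \gamma_n = \zeta$. The prefix $\gamma_1, \dots, \gamma_{n-1}$ witnesses $\xi \stackrel{T}{\Longmapsto} \gamma_{n-1}$, so the induction hypothesis gives $P_T \vdash \overline{\xi} \Rightarrow \overline{\gamma_{n-1}}$, while the single production $\gamma_{n-1} \stackrel{T}{\longmapsto} \gamma_n = \zeta$ gives $P_T \vdash \overline{\gamma_{n-1}} \Rightarrow \overline{\zeta}$ by Lemma~\ref{L:Derivability}. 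Since $\gamma_{n-1} \in \mathcal{A}^+$, its code $\overline{\gamma_{n-1}}$ is nonempty; concatenating the two witnessing chains $C_0 \to C_1, C_1 \to C_2, \dots$ from the definition of $\Rightarrow$ through an arbitrary member of $\overline{\gamma_{n-1}}$ yields $P_T \vdash \overline{\xi} \Rightarrow \overline{\zeta}$, completing the induction.

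I do not expect any genuine obstacle here: the statement is a straightforward transitive closure of Lemma~\ref{L:Derivability}. The only point deserving a word of care is the mismatch between $\Rightarrow$, which is defined between single formulas, and codes, which are finite sets of $\mathcal{A}$-formulas; this is handled uniformly by Lemma~\ref{L:CodeDerivation} and the invertibility of $\mathrm{R}$, which make all members of a given code interderivable and hence let us glue the $\Rightarrow$-chains freely.
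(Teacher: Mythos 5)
Your proof is correct and matches the paper's intent: the paper simply declares the corollary ``trivial by definition of the tag system,'' i.e., iterate Lemma~\ref{L:Derivability} along the computation and chain the $\Rightarrow$-derivations, which is exactly your induction. Your extra care about the base case $n=1$ and about gluing chains through a member of the intermediate code via Lemma~\ref{L:CodeDerivation} is sound and only makes explicit what the paper leaves unsaid.
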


\noindent The proof is trivial by definition of the tag system.

\subsubsection{Production of the $P_T$-derivations}

Here we show that the tag system $T$ can produce, on the input word, the words whose codes have derivations in $P_T$. As a preliminary let us introduce some notation and prove auxiliary lemmas.

Given $\alpha \in \mathcal{A}^+$, denote by $T_{\alpha}$ the set of all $\mathcal{A}$-formulas whose words have productions of the tag system $T$ on the input word $\alpha$:
\begin{equation*}
  T_{\alpha} = \{ A \mid A \text{ is a } \mathcal{A}\text{-formula such that } \alpha \stackrel{T}{\Longmapsto} word(A) \}.
\end{equation*}
It is clear that $\overline{\alpha} \subseteq T_{\alpha}$ for all $\alpha \in \mathcal{A}^+$.

\begin{lemma} \label{L:Intersection}
$P_T^* \cap T_{\alpha}^* = \emptyset$, for all $\alpha \in \mathcal{A}^+$.
\end{lemma}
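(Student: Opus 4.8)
The plan is to show that no substitution instance of an axiom of $P_T$ can coincide with a substitution instance of any $\mathcal{A}$-formula in $T_\alpha$. Since $T_\alpha$ consists of $\mathcal{A}$-formulas (all of which are built by $\cdot$ from codes of letters) and $P_T$ consists of the axioms $(\mathrm{T}_1)$, $(\mathrm{T}_2)$, $(\mathrm{R}_1)$--$(\mathrm{R}_4)$, it suffices to check, for each such axiom $A$ and each $\mathcal{A}$-formula $B$, that $A$ and $B$ are not unifiable. The key structural fact I would exploit is that every $\mathcal{A}$-formula, by Definition~\ref{D:AlphabeticFormula} and the definition of $\cdot$, is either a code of a letter, i.e. of the shape $C_i \circ p$, or of the shape $((D_1 \to D_1) \to D_1) \circ D_2$; in either case its outermost structure is that of $x \circ y = ((\hat y \to \hat y) \to \hat y) \to (\hat x \to ((\hat y \to \hat y) \to \hat y))$, namely a substitution instance of $x \to (y \to x)$. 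So the whole statement reduces to: none of the axioms of $P_T$, after any substitution, can become a substitution instance of $x \to (y \to x)$ that is additionally an $\mathcal{A}$-formula.

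First I would handle the transformation rules $(\mathrm{R}_1)$--$(\mathrm{R}_4)$. Each of these is of the form $E \to F$ where $E$ is itself a $\cdot$-term, hence (unfolding the outermost $\cdot$) a formula of the shape $((G \to G) \to G) \to (\hat x[\,\cdot\,] \to \dots)$; the point is that the antecedent $E$, being $x \cdot (y \cdot z)$ or $(x \cdot y) \cdot z$ etc., is again a substitution instance of $x \to (y \to x)$, so the whole rule $E \to F$ has the outer shape $(\text{instance of } x\!\to\!(y\!\to\!x)) \to F$. By Lemma~\ref{L:NonUnifiableFormula} (with the roles: $x \circ y$ there plays the role of the generic left side, $(x\circ y)\to z$ the role of the rule), such a formula is not unifiable with anything of the form $x \circ y$, and in particular not with any $\mathcal{A}$-formula. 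I would spell out the matching of the two branches of the top-level implication: an $\mathcal{A}$-formula has antecedent $(\hat y \to \hat y) \to \hat y$, whereas a rule $E \to F$ has antecedent $E$ which unfolds with a further top-level implication whose own antecedent is $((\cdot \to \cdot) \to \cdot)$, producing a clash one level deeper — essentially the same bracket-structure clash used in Lemma~\ref{L:AlphabeticFormulas}, case 2.

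Next I would treat the production axioms. Axiom $(\mathrm{T}_2)$ is $\overline{a_i\alpha} \to \overline{\omega_i}$, i.e. an implication between two $\mathcal{A}$-formulas; since each $\mathcal{A}$-formula is itself an instance of $x \to (y \to x)$, the formula $\overline{a_i\alpha} \to \overline{\omega_i}$ has the shape $(x\circ y) \to z$, which by Lemma~\ref{L:NonUnifiableFormula} is not unifiable with $x \circ y$, hence not with any $\mathcal{A}$-formula. Axiom $(\mathrm{T}_1)$ is $\overline{a_i\alpha}\cdot x \to x\cdot\overline{\omega_i}$; unfolding the left-hand $\cdot$ shows its antecedent is $((x\to x)\to x)\circ$-style, so again it is of the form $(\text{instance of }x\!\to\!(y\!\to\!x))\to(\dots)$ and the same argument applies. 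In every case the obstruction is a mismatch in the bracketing of the antecedent of the outermost implication, exactly as in the proof of Lemma~\ref{L:AlphabeticFormulas}.

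The main obstacle I anticipate is bookkeeping rather than conceptual: I must make sure the unfolding of the abbreviations $x\circ y$, $x\cdot y$, and the composite shortcuts like $\overline{\alpha}\cdot x$ is done carefully enough that the "clash one implication-level down" is genuine and not accidentally absorbed by a clever substitution — in particular, a substitution could try to turn $\hat x$ (which may itself be $x\to x$) into something matching, so I would fix once and for all that $\hat x$ is a one-variable $\{\to\}$-formula and track where its single variable lands. Concretely I would reduce everything to the following clean sublemma and then invoke it uniformly: \emph{if $A$ is a formula whose top connective is $\to$ and whose antecedent's top connective is $\to$ with a further $\to$ inside its own antecedent, while $B$'s top connective is $\to$ with antecedent of the form $(\hat y\to\hat y)\to\hat y$, then $A$ and $B$ are not unifiable} — this is immediate by comparing the third-level subterms, and it covers all of $(\mathrm{T}_1)$, $(\mathrm{R}_1)$--$(\mathrm{R}_4)$; the cases $(\mathrm{T}_2)$ and letter-codes versus each other are already covered by Lemmas~\ref{L:NonUnifiableFormula} and~\ref{L:AlphabeticFormulas}. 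Combining, $P_T^* \cap T_\alpha^* = \emptyset$.
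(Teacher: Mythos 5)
Your overall decomposition is the paper's intended one: every $\mathcal{A}$-formula (hence every member of $T_{\alpha}$, hence all of $T_{\alpha}^*$) is a substitution instance of $x \circ y$, while every axiom of $P_T$ has the form $E \to F$ with $E$ itself a substitution instance of $x \circ y$ (a letter code $C_i \circ p$ or a $\cdot$-compound $((G\to G)\to G)\circ H$), so that $P_T^* \subseteq \bigl((x\circ y)\to z\bigr)^*$; one application of Lemma~\ref{L:NonUnifiableFormula} then finishes. The paper's proof is exactly this one-line appeal to Lemma~\ref{L:NonUnifiableFormula}, and your case-by-case check of $(\mathrm{T}_1)$, $(\mathrm{T}_2)$, $(\mathrm{R}_1)$--$(\mathrm{R}_4)$ is a correct unpacking of it.

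There is, however, a genuine flaw in the ``clean sublemma'' you say you would reduce everything to, and in the recurring phrase ``instance of $x \to (y \to x)$.'' The sublemma is false as stated: if $A = ((A_{111}\to A_{112})\to A_{12})\to A_2$ and $B = ((\hat{y}\to\hat{y})\to\hat{y})\to B_2$, unification merely forces $\sigma A_{111} = \sigma A_{112} = \sigma A_{12} = \tau\hat{y}$ and produces no clash; for instance $((p\to p)\to q)\to r$ and $((\hat{y}\to\hat{y})\to\hat{y})\to z$ are unifiable. Likewise $x\to(y\to x)$ and $(x\to(y\to x))\to z$ \emph{are} unifiable, so ``the antecedent is an instance of $x\to(y\to x)$'' is not by itself an obstruction. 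What actually blocks unification in Lemma~\ref{L:NonUnifiableFormula} is the occurs-check forced by the specific repetition pattern of $x\circ y$: matching the antecedent $(C\to C)\to C$ of an instance of $x\circ y$ against the antecedent $((E\to E)\to E)\to(\cdots)$ of an instance of $(x\circ y)\to z$ yields $C = E\to E$ and $C = E$, hence the impossible $E = E\to E$. So the proof goes through only along the branch of your argument that invokes Lemma~\ref{L:NonUnifiableFormula} directly on the shapes $x\circ y$ versus $(x\circ y)\to z$; the third-level ``bracket clash'' sublemma should be discarded, not relied upon.
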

\noindent The proof is trivial by application of Lemma~\ref{L:NonUnifiableFormula}.

For any propositional calculus $P$, denote by $\left\langle P \right\rangle$ the set of propositional formulas obtained from $P$ by applying modus ponens and substitution once:
\begin{align*}
  \left\langle P \right\rangle := & \left\{ B \mid A, A \to B \in P \text{ for some formula } A \right\} \cup \\
  & \left\{ \sigma A \mid A \in P \text{ and } \sigma \text{ is a substitution} \right\}.
\end{align*}
Furthermore, let $\left\langle P \right\rangle_0 = P$ and
\begin{equation*}
  \left\langle P \right\rangle_{n+1} = \left\langle \left\langle P \right\rangle_n \right\rangle
\end{equation*}
for $n \geq 0$. It follows easily that $\left\langle P \right\rangle_n \subseteq \left\langle P \right\rangle_{n+1}$ for all $n \geq 0$ and the set $[P]$ of all derivable formulas of the calculus $P$ can be represented as
\begin{equation*}
  [P] = \left\langle P \right\rangle_{\infty} = \bigcup_{n \geq 0} \left\langle P \right\rangle_n.
\end{equation*}
Let $A$ be a formula derivable from $P$. We say that $A$ has the \emph{derivation height} $n$, if $A \in \left\langle P \right\rangle_n$ and $A \notin \left\langle P \right\rangle_{n-1}$.

The following theorem describes formulas derivable from the calculus $P_T$ and the code of a nonempty word $\alpha \in \mathcal{A}^+$.

\begin{lemma} \label{L:Production}
$[ P_T \cup \overline{\alpha} ] =  P_T^* \cup T^*_{\alpha}$  for all $\alpha \in \mathcal{A}^+$.
\end{lemma}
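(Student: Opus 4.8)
The plan is to prove the two inclusions of $[P_T\cup\overline{\alpha}]=P_T^*\cup T_\alpha^*$ separately. The inclusion $\supseteq$ is the easy direction. First, $P_T^*\subseteq[P_T\cup\overline{\alpha}]$ is immediate, since each axiom of $P_T$ is derivable and $[P_T\cup\overline{\alpha}]$ is closed under substitution. For $T_\alpha^*\subseteq[P_T\cup\overline{\alpha}]$, take any $\mathcal{A}$-formula $A$ with $\alpha\stackrel{T}{\Longmapsto}word(A)$; by Corollary~\ref{C:Derivability} we have $P_T\vdash\overline{\alpha}\Rightarrow\overline{word(A)}$, and unfolding the definition of $\Rightarrow$ and of the set-abbreviation $\overline{\cdot}$ this yields a derivation of $A$ from $P_T\cup\overline{\alpha}$ by repeated modus ponens along the chain, started from a suitable element of $\overline{\alpha}$; closure under substitution then gives all of $T_\alpha^*$.

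For the converse inclusion I would show that $P_T^*\cup T_\alpha^*$ is closed under the two rules of inference, equivalently prove by induction on $n$ that $\langle P_T\cup\overline{\alpha}\rangle_n\subseteq P_T^*\cup T_\alpha^*$. The base case holds because $P_T\subseteq P_T^*$ and $\overline{\alpha}\subseteq T_\alpha\subseteq T_\alpha^*$, using $\alpha\stackrel{T}{\Longmapsto}\alpha$. In the inductive step a formula $B\in\langle P_T\cup\overline{\alpha}\rangle_{n+1}$ is obtained either by substitution from a formula in $\langle P_T\cup\overline{\alpha}\rangle_n$ — and both $P_T^*$ and $T_\alpha^*$ are trivially closed under substitution — or by modus ponens from formulas $A$ and $A\to B$ which, by the induction hypothesis, lie in $P_T^*\cup T_\alpha^*$. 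All the content is in this last case.

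To treat it, I would distinguish cases according to which axiom or which reachable $\mathcal{A}$-formula the major premise $A\to B$ is a substitution instance of ($\mathrm{T_1}$, $\mathrm{T_2}$, $\mathrm{R_1}$--$\mathrm{R_4}$, or an $\mathcal{A}$-formula). In each case the top-level shape of $A\to B$ forces the minor premise $A$ to be a substitution instance of the corresponding antecedent. Using Lemma~\ref{L:NonUnifiableFormula}, Lemma~\ref{L:AlphabeticFormulas}, Lemma~\ref{L:Intersection}, and elementary impossibilities for unifying a formula $G$ with a formula properly containing $G$, I would argue: (i) in every case $A$ cannot lie in $P_T^*$, so by the induction hypothesis $A\in T_\alpha^*$, i.e. $A$ is a substitution instance of a reachable $\mathcal{A}$-formula; (ii) matching this against the required shape of $A$ and invoking Lemma~\ref{L:AlphabeticFormulas} pins down exactly which $\mathcal{A}$-formula it is, and with which substitution; (iii) reading off $B$ shows that $B$ is a substitution instance of an $\mathcal{A}$-formula obtained from that one either by a re-parenthesization (when $A\to B$ is an instance of $\mathrm{R_1}$--$\mathrm{R_4}$) or by one production step of $T$ (when $A\to B$ is an instance of $\mathrm{T_1}$ or $\mathrm{T_2}$), exactly as in the proofs of Lemma~\ref{L:CodeDerivation} and Lemma~\ref{L:Derivability}. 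Since $word(\cdot)$ is invariant under re-parenthesization and $T_\alpha$ is closed under productions of $T$ by definition, the resulting $\mathcal{A}$-formula is again reachable from $\alpha$, whence $B\in T_\alpha^*$.

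The main obstacle is steps (i)--(ii) of the modus ponens case: setting up the case analysis so that every ``unintended'' application of modus ponens — in particular between two axiom instances, or producing the consequent of an axiom from an inappropriate minor premise — is excluded by non-unifiability, while the ``intended'' applications provably stay inside $T_\alpha^*$. This is largely bookkeeping, but it must be done carefully, since the abbreviations $\cdot$ and $\circ$ expand into moderately deep $\{\to\}$-formulas, and the claim that a common substitution instance forces two $\mathcal{A}$-formulas to coincide relies on the rigidity of these templates.
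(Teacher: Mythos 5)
Your proposal is correct and follows essentially the same route as the paper: the easy inclusion via Lemma~\ref{L:CodeDerivation} and Corollary~\ref{C:Derivability}, and the hard inclusion by induction on derivation height, where substitution is trivial and the modus ponens step is resolved by the non-unifiability lemmas (\ref{L:NonUnifiableFormula}, \ref{L:AlphabeticFormulas}, \ref{L:Intersection}) forcing $A \in T_\alpha^*$ and $A \to B \in P_T^*$, followed by the same case split into $\mathrm{T}$-axioms (one production step) and $\mathrm{R}$-axioms (re-parenthesization preserving $word(\cdot)$). The only cosmetic difference is that you organize the case analysis by the shape of the major premise, while the paper first isolates $A \in T_\alpha^*$, $A \to B \in P_T^*$ and then splits on the axiom group.
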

\begin{proof}
It is evident that
\begin{equation*}
  P_T^* \cup T^*_{\alpha} \subseteq [ P_T \cup \overline{\alpha} ]
\end{equation*}
by Lemma~\ref{L:CodeDerivation} and Corollary~\ref{C:Derivability}, so we only prove by induction on the derivation height $n \geq 0$ that
\begin{equation*}
  \left\langle P_T \cup \overline{\alpha} \right\rangle_n \subseteq P_T^* \cup T^*_{\alpha}.
\end{equation*}

If $n = 0$, then $\left\langle P_T \cup \overline{\alpha} \right\rangle_0 = P_T \cup \overline{\alpha}$. Clearly, $\overline{\alpha} \subseteq T^*_{\alpha}$ and all axioms of $P_T$ are in $P_T^*$.

Let the induction assumption be satisfied for some $n \geq 1$. Since the right-hand side of the inclusion is closed under substitution, we only consider the case of a formula $B$ obtained by modus ponens from some formulas $A,\ A \to B \in \left\langle P_T \cup \overline{\alpha} \right\rangle_n$. By induction hypothesis,
\begin{equation*}
  \left\langle P_T \cup \overline{\alpha} \right\rangle_n \subseteq P_T^* \cup T^*_{\alpha}.
\end{equation*}
It is easily shown that $P_T^* \cap T^*_{\alpha} = \emptyset$ due to Lemma~\ref{L:Intersection}. Hence either $A$ or $A \to B$ are in $T^*_{\alpha}$, since otherwise $A$ is both a substitution instance of $x \circ y$ and $x \circ y \to z$, which is impossible by Lemma~\ref{L:NonUnifiableFormula}. If $A \to B \in T^*_{\alpha}$, then $A$ is a substitution instance of the formula $(y \to y) \to y$. However, $A \in P_T^* \cap T^*_{\alpha}$, which is impossible, because all formulas in $P_T^*$ and $T^*_{\alpha}$ are not unifiable with $(y \to y) \to y$. Therefore, $A \in T^*_{\alpha}$ and $A \to B \in P_T^*$.

Now we show that $B \in T^*_{\alpha}$. Since $A \in T^*_{\alpha}$, then $A \in \overline{\gamma}^*$ for some word $\gamma \in \mathcal{A}^+$ such that $\alpha \stackrel{T}{\Longmapsto} \gamma$. Note that $A \to B$ is a substitution instance of some axiom in $P_T$, so we need to consider the following two cases.

\textbf{Case 1.} $A \to B$ is a substitution instance of an axiom in $\mathrm{T}$. Then $A$ is a substitution instance of the formula $\overline{a_i \alpha_1} \cdot C$ or $\overline{a_i \alpha_1}$ for some letter $a_i \in \mathcal{A}$, a word $\alpha_1 \in \mathcal{A}^*$ with $|\alpha_1| = d-1$, and a formula $C$. Since $A \in \overline{\gamma}^*$, we have that $C$ is a alphabetic formula. Therefore, by Lemma~\ref{L:AlphabeticFormulas} there is a unique word $\alpha_2 \in \mathcal{A}^*$ such that $\gamma = a_1 \alpha_1 \alpha_2$. It is clear that $\alpha_2 = word(C)$ and $B$ is the substitution instance of the alphabetic formula $C \cdot \overline{\omega_i}$ or $\overline{\omega_i}$. Thus, $B \in \overline{\eta}^*$ for $\eta = \alpha_2 \omega_i$ and $\gamma \stackrel{T}{\longmapsto} \eta$.

\textbf{Case 2.} $A \to B$ is a substitution instance of an axiom in $\mathrm{R}$. Since the formula $A$ is a substitution instance of an alphabetic formula $C \in \overline{\gamma}$ and the set of alphabetic formulas $\overline{\gamma}$ is closed under application modus ponens and the axioms $\mathrm{R}$, we have that also $B \in \overline{\gamma}^*$.

These cases exhaust all possibilities and so we have that $B \in \overline{\eta}^*$ for some word $\eta \in \mathcal{A}^*$ such that $\gamma \stackrel{T}{\Longmapsto} \eta$. Hence $B \in T^*_{\alpha}$, since $\alpha \stackrel{T}{\Longmapsto} \gamma$ by induction hypothesis. The proof is completed.
\end{proof}

Now we prove that the code of each nonempty word over $\mathcal{A}$ derivable from $P_T$ and $\overline{\alpha}$ is the code of a word produced from $\alpha$ by the tag system $T$.

\begin{corollary} \label{C:Production}
If $P_T \vdash \overline{\xi} \Rightarrow \overline{\zeta}$ then $\xi \stackrel{T}{\Longmapsto} \zeta$, for all $\xi, \zeta \in \mathcal{A}^+$.
\end{corollary}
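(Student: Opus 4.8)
The plan is to derive this as a near-immediate consequence of Lemma~\ref{L:Production}. Suppose $P_T \vdash \overline{\xi} \Rightarrow \overline{\zeta}$; by the definition of the relation $\Rightarrow$, this means there is a chain of formulas $C_0, C_1, \ldots, C_n$ with $C_0 \in \overline{\xi}$ (more precisely, the chain starts at some $A \in \overline{\xi}$ and ends at some $B \in \overline{\zeta}$, since $\overline{\xi}$ and $\overline{\zeta}$ are finite sets of formulas rather than single formulas) such that $P_T \vdash C_i \to C_{i+1}$ for each $i$. Chasing down one more layer, $P_T \vdash C_i \to C_{i+1}$ means $C_i \to C_{i+1} \in [P_T]$, and then $C_{i+1}$ is obtained by modus ponens from $C_i$ and $C_i \to C_{i+1}$; combining the last two, $C_{i+1} \in [P_T \cup \{C_i\}]$. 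Since each $C_i$ lies in $[P_T \cup \overline{\xi}]$ by an easy induction along the chain (the base case $C_0 \in \overline{\xi} \subseteq [P_T \cup \overline{\xi}]$, and the step is the modus ponens just described together with $C_i \to C_{i+1} \in [P_T] \subseteq [P_T \cup \overline{\xi}]$), we conclude that the terminal formula $B \in \overline{\zeta}$ lies in $[P_T \cup \overline{\xi}]$.

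Now I apply Lemma~\ref{L:Production} with $\alpha = \xi$: $[P_T \cup \overline{\xi}] = P_T^* \cup T^*_{\xi}$. So $B \in P_T^* \cup T^*_{\xi}$. But $B \in \overline{\zeta}$, so $B$ is an alphabetic formula, hence a substitution instance of $x \to (y \to x)$ and in particular of the form $x \circ y$ up to substitution — the point being that $B$ is \emph{not} unifiable with any axiom of $P_T$ in the relevant way. More directly: by Lemma~\ref{L:Intersection} (or rather the fact recorded in the proof of Lemma~\ref{L:Production} that $P_T^* \cap T^*_{\xi} = \emptyset$ via Lemma~\ref{L:NonUnifiableFormula}), and since every formula in $P_T^*$ is a substitution instance of $x \circ y \to z$-type axioms while every alphabetic formula is a substitution instance of the $x \circ y$-type formula, $B$ cannot lie in $P_T^*$. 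Therefore $B \in T^*_{\xi}$, so $B \in \overline{\eta}^*$ for some $\eta$ with $\xi \stackrel{T}{\Longmapsto} \eta$. But $B \in \overline{\zeta}$ as well, so $\overline{\zeta}^* \cap \overline{\eta}^* \neq \emptyset$, whence by the Corollary after Lemma~\ref{L:DerivabilityOfAFormulas} (no two distinct codes are unifiable) we get $\overline{\zeta} = \overline{\eta}$, i.e. $\zeta = \eta$, and thus $\xi \stackrel{T}{\Longmapsto} \zeta$.

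The only delicate point — and the step I would write out most carefully — is reconciling the set-valued nature of the codes $\overline{\xi}, \overline{\zeta}$ with the single-formula bookkeeping in the definition of $\Rightarrow$: I need the abbreviation conventions of Section~2.5 to guarantee that $P_T \vdash \overline{\xi} \Rightarrow \overline{\zeta}$ really does unwind to a statement about membership of a specific alphabetic formula in $[P_T \cup \overline{\xi}]$, rather than requiring all of $\overline{\xi}$ to be present simultaneously or all of $\overline{\zeta}$ to be derived. Everything else is an invocation of Lemma~\ref{L:Production}, the non-unifiability lemmas, and the injectivity of the coding. No genuine obstacle remains; the corollary is essentially a repackaging of Lemma~\ref{L:Production}.
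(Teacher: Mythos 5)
Your proposal is correct and follows essentially the same route as the paper: unwind $P_T \vdash \overline{\xi} \Rightarrow \overline{\zeta}$ to membership of a formula of $\overline{\zeta}$ in $[P_T \cup \overline{\xi}]$, apply Lemma~\ref{L:Production}, exclude $P_T^*$ via Lemma~\ref{L:NonUnifiableFormula}, and read off $\xi \stackrel{T}{\Longmapsto} \zeta$ from the definition of $T_{\xi}$. Your extra care in handling the set-valued codes and in invoking the non-unifiability of distinct codes to identify $\eta$ with $\zeta$ only makes explicit what the paper leaves implicit.
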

\begin{proof}
Let $P_T \vdash \overline{\xi} \Rightarrow \overline{\zeta}$, so $\overline{\zeta} \in [P_T \cup \overline{\xi}]$. Then $\overline{\zeta} \in P_T^* \cup T^*_{\xi}$ by Lemma~\ref{L:Production}. Since $\overline{\zeta} \notin P_T^*$ due to Lemma~\ref{L:NonUnifiableFormula}, we have that $\overline{\zeta} \in T^*_{\xi}$ and so $\xi \stackrel{T}{\Longmapsto} \zeta$ by definition of the set $T_{\xi}$. The lemma is proved.
\end{proof}

\subsubsection{Halting condition}

Above we shown that derivations in the propositional calculus $P_T$ can simulate productions in the tag system $T$. Now we describe how to perform the halting condition of tag system $T$ on input words. For this reason, we consider a propositional calculus $P_0$ and the following group of axioms.

\medskip
\noindent \emph{The halting condition} for the tag system $T$:
$$
  \leqno (\mathrm{H}) \quad \overline{\alpha} \to A \quad \text{ for all } \alpha \in \mathcal{A}^+,\ |\alpha| < d,\ A \in P_0.
$$

Denote by $P_{T, P_0}$ the calculus $P_T \cup \mathrm{H}$, and by $P_{T, P_0, \xi}$ the calculus $P_T \cup \mathrm{H} \cup \overline{\xi}$. Let the tag system $T$ halts on the input word $\xi$, we take the minimal $n \geq 0$ such that $\left\langle P_{T, P_0, \xi} \right\rangle_n$ contains at least one substitution instance of element of the code for some word $\zeta \in \mathcal{A}^+$ with $|\zeta| < d$:
\begin{equation*}
  N_{\xi} := \min \left\{ n \geq 0 \mid \overline{\zeta}^* \cap \left\langle P_{T, P_0, \xi} \right\rangle_{n} \neq \emptyset, \text{ for some } \zeta \in \mathcal{A}^+ \text{ with } |\zeta| < d \right\}.
\end{equation*}
If $T$ does not halt, then we put $N_{\xi} := \infty$. We have the following generalization of Lemma~\ref{L:Production}.

\begin{lemma} \label{L:FormOfDerivations}
$\left\langle P_{T,P_0,\xi} \right\rangle_{N_{\xi}} \subseteq P_{T,P_0}^* \cup T^*_{\xi}$.
\end{lemma}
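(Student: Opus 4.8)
The plan is to mimic the proof of Lemma~\ref{L:Production} almost word for word, the new feature being that the halting axioms $\mathrm{H}$ are ``inert'' up to derivation height $N_\xi$. Concretely, I would prove by induction on $n$, for all $0 \le n \le N_\xi$ (and for all $n \ge 0$ if $N_\xi = \infty$), that
\begin{equation*}
  \left\langle P_{T,P_0,\xi} \right\rangle_n \ \subseteq\ P_{T,P_0}^* \cup T_\xi^*,
\end{equation*}
and then take the union over $n$. Here $P_{T,P_0}^* = P_T^* \cup \mathrm{H}^*$, and the right-hand side is closed under substitution. The base case $n = 0$ is immediate, since $\left\langle P_{T,P_0,\xi} \right\rangle_0 = P_T \cup \mathrm{H} \cup \overline{\xi}$ and $\overline{\xi} \subseteq T_\xi$; the case of a formula obtained by a single substitution is absorbed by the closure of the right-hand side under substitution, exactly as in Lemma~\ref{L:Production}.

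All the work is in the modus ponens step. Fix $n < N_\xi$ and suppose $B$ is obtained by modus ponens from $A$ and $A \to B$, both of which, by the induction hypothesis, lie in $P_T^* \cup \mathrm{H}^* \cup T_\xi^*$; I must show $B$ lies there too. The crucial new observation --- the heart of the lemma --- is that no substitution instance of a halting axiom can be the implicative premise here below height $N_\xi$: if $A \to B \in \mathrm{H}^*$, then $A \to B$ is a substitution instance of some $B_0 \to A_0$ with $B_0 \in \overline{\delta}$, $|\delta| < d$, $A_0 \in P_0$, whence $A$ is a substitution instance of the alphabetic formula $B_0$, i.e. $A \in \overline{\delta}^* \cap \left\langle P_{T,P_0,\xi} \right\rangle_n$ with $|\delta| < d$, contradicting the minimality of $N_\xi$. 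So $A \to B \in P_T^* \cup T_\xi^*$. One must also rule out $A \in \mathrm{H}^*$; this follows from unifiability facts of the same type as those used in Lemmas~\ref{L:AlphabeticFormulas} and~\ref{L:Production} --- a substitution instance of a halting axiom is an implication $\sigma B_0 \to \sigma A_0$ whose antecedent $\sigma B_0$ is a substitution instance of an alphabetic formula, hence is unifiable neither with $(y \to y) \to y$ nor with $y' \to y'$, while $A \to B \in P_T^* \cup T_\xi^*$ forces $A$ either to be a substitution instance of $(y \to y) \to y$ (case $A \to B \in T_\xi^*$) or to be a substitution instance of the antecedent of some axiom of $\mathrm{T} \cup \mathrm{R}$ --- and those antecedents are the alphabetic formulas $\overline{a_i \alpha_1}$ together with the $\cdot$-formulas $\overline{a_i \alpha_1} \cdot C$, $x \cdot (y \cdot z)$, $(x \cdot y) \cdot z$, $(x \cdot (y \cdot z)) \cdot u$, $((x \cdot y) \cdot z) \cdot u$, none of which admits a common substitution instance with a halting axiom. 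Hence $A$ and $A \to B$ both lie in $P_T^* \cup T_\xi^*$, and we are in exactly the situation of the modus ponens step in the proof of Lemma~\ref{L:Production}: running that argument with $\alpha$ replaced by $\xi$ (choose $\gamma$ with $A \in \overline{\gamma}^*$ and $\xi \stackrel{T}{\Longmapsto} \gamma$, then split according to whether $A \to B$ instantiates a $\mathrm{T}$- or an $\mathrm{R}$-axiom) gives $B \in T_\xi^* \subseteq P_{T,P_0}^* \cup T_\xi^*$.

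The step I expect to be the main obstacle is getting the inertness of $\mathrm{H}$ exactly right, i.e. genuinely exploiting that $N_\xi$ is defined through \emph{derivation height} and not merely through derivability: the halting axioms and all of their substitution instances already belong to $\left\langle P_{T,P_0,\xi} \right\rangle_1$, so they are present almost from the start; what the definition of $N_\xi$ buys us is that such an axiom cannot be \emph{used} as the major premise of modus ponens before a substitution instance of $\overline{\zeta}$ for some word $\zeta$ with $|\zeta| < d$ has been derived, and by definition the first height at which that happens is $N_\xi$ itself. Everything else --- the base and substitution cases, the various non-unifiability claims, and the final appeal to the analysis of Lemma~\ref{L:Production} --- is routine, and I would record the non-unifiability statements involving $\mathrm{H}^*$ as a short auxiliary observation rather than repeat the calculations.
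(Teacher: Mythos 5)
Your proof is correct and follows essentially the same route as the paper: reduce to the modus ponens analysis of Lemma~\ref{L:Production} and observe that the only genuinely new case, $A \to B \in \mathrm{H}^*$, is impossible below height $N_{\xi}$ because it would put a substitution instance of some $\overline{\delta}$ with $|\delta| < d$ into $\left\langle P_{T,P_0,\xi} \right\rangle_n$ for $n < N_{\xi}$, contradicting the minimality of $N_{\xi}$. You are in fact somewhat more careful than the paper, which silently omits the subcase $A \in \mathrm{H}^*$ that you dispose of via the non-unifiability facts of Lemma~\ref{L:NonUnifiableFormula}.
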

\begin{proof}
Clearly, it is sufficient to consider the case of the proof of Lemma~\ref{L:Production} for which $A \to B$ is a substitution instance of axioms $(\mathrm{H})$. But this case is impossible, since otherwise we would have that $0 < |\gamma| < d$. This contradicts to the fact that
\begin{equation*}
  \overline{\gamma}^* \cap \left\langle P_{T,P_0,\xi} \right\rangle_{n} \neq \emptyset
\end{equation*}
and $n < N_{\xi}$. The lemma is proved.
\end{proof}

Now we prove the key lemma of this section.

\begin{lemma} \label{L:HaltingCondition}
Fix a propositional calculus $P_0$, then the tag system $T$ halts on input $\xi$ if and only if $P_{T,P_0,\xi} \geq P_0$, for all $\xi \in \mathcal{A}^+$.
\end{lemma}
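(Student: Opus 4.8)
The plan is to prove the two implications separately, using the machinery already developed: Corollary~\ref{C:Derivability} (and the auxiliary lemmas behind it) for the "only if" direction, and Lemma~\ref{L:FormOfDerivations} together with Lemma~\ref{L:NonUnifiableFormula} for the "if" direction.

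First I would handle the forward direction. Suppose $T$ halts on $\xi$. Then by definition there is a word $\zeta \in \mathcal{A}^+$ with $|\zeta| < d$ and $\xi \stackrel{T}{\Longmapsto} \zeta$ (recalling that all $\omega_i$ are nonempty, so the word reached is in $\mathcal{A}^+$). By Corollary~\ref{C:Derivability}, $P_T \vdash \overline{\xi} \Rightarrow \overline{\zeta}$, hence every formula in $\overline{\zeta}$ is derivable from $P_{T,P_0,\xi}$. Now pick any $A \in P_0$; by the halting-condition axioms $(\mathrm{H})$, the set $\overline{\zeta} \to A$ is a set of axioms of $P_{T,P_0,\xi}$, and since $|\zeta| < d$ these axioms are actually present. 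Applying modus ponens to (each element of) $\overline{\zeta}$ and the corresponding formula $\overline{\zeta} \to A$, we derive $A$. As $A \in P_0$ was arbitrary, $P_{T,P_0,\xi} \vdash A$ for every axiom $A$ of $P_0$, so $P_{T,P_0,\xi} \geq P_0$.

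For the converse, I would argue by contraposition: assume $T$ does not halt on $\xi$, so $N_\xi = \infty$, and show $P_{T,P_0,\xi} \not\geq P_0$. By Lemma~\ref{L:FormOfDerivations}, which applies at level $N_\xi = \infty$, we get $[P_{T,P_0,\xi}] = \left\langle P_{T,P_0,\xi} \right\rangle_\infty \subseteq P_{T,P_0}^* \cup T^*_\xi$. It therefore suffices to exhibit an axiom of $P_0$ that lies in neither $P_{T,P_0}^*$ nor $T^*_\xi$ — or, more robustly, to show that $P_0 \not\subseteq P_{T,P_0}^* \cup T^*_\xi$. Here I would use that the formula $x \circ y$ can be substituted for the variables of any axiom $A$ of $P_0$ to produce a derivable consequence; more directly, one shows that the specific substitution instance of $A$ in which $x \circ y$ is plugged in appropriately cannot be a substitution instance of any $P_{T,P_0}$-axiom (by Lemma~\ref{L:NonUnifiableFormula}, $x \circ y$ and $(x \circ y) \to z$ are not unifiable, which blocks the $(\mathrm{T})$, $(\mathrm{R})$, and $(\mathrm{H})$ axioms from matching) and cannot be an alphabetic formula (alphabetic formulas have the rigid shape $C_i \circ p$ or $((B_1\to B_1)\to B_1)\circ B_2$, incompatible with the image of $A$ under that substitution). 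Hence that consequence of $A$ is not in $[P_{T,P_0,\xi}]$, so $P_{T,P_0,\xi} \not\vdash$ some theorem of $P_0$, giving $P_{T,P_0,\xi} \not\geq P_0$.

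The main obstacle is the converse direction, and specifically making the last unification argument airtight for an \emph{arbitrary} calculus $P_0$: the excerpt's earlier lemmas are tailored to the codes and to $x \circ y$, but $P_0$'s axioms are unconstrained. The right move is probably to observe that since every $P_T$-axiom and $(\mathrm{H})$-axiom is a substitution instance of $x \to (y \to x)$ (Lemma~\ref{L:Inclusion} extended to $P_{T,P_0}$ is false in general because of $(\mathrm{H})$ with arbitrary $P_0$ — so one must be careful here) one identifies a single formula, derivable from $P_0$ but provably outside $P_{T,P_0}^* \cup T^*_\xi$; the cleanest candidate is a formula built from $x \circ y$ whose outermost structure forces non-unifiability with everything in $P_{T,P_0}^* \cup T^*_\xi$ via Lemma~\ref{L:NonUnifiableFormula} and Lemma~\ref{L:AlphabeticFormulas}. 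I would check that $P_0 \vdash$ such a formula (using that $P_0$ proves all its own substitution instances) and that it escapes the right-hand side, completing the contrapositive.
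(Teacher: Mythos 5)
Your forward direction is exactly the paper's argument and is fine. The converse is also set up the right way (contraposition, $N_\xi=\infty$, and Lemma~\ref{L:FormOfDerivations} giving $[P_{T,P_0,\xi}]\subseteq P_{T,P_0}^*\cup T_\xi^*$), but the step you yourself flag as the obstacle is a genuine gap, and your proposed fixes do not close it. Substituting $x\circ y$ into the variables of an axiom $A$ of $P_0$ cannot help when $A$ is already a substitution instance of one of the axioms of $P_{T,P_0}$: in that case \emph{every} substitution instance of $A$ lies in $P_{T,P_0}^*$, so nothing obtained from $A$ by substitution escapes the right-hand side. This is not a hypothetical worry. If $\hat{x}$ were fixed in advance (say $\hat{x}=x$) and $P_0$ happened to contain, for that same $\hat{x}$, the formula $x\cdot(y\cdot z)\to(x\cdot y)\cdot z$, then $P_0\subseteq P_T\subseteq P_{T,P_0,\xi}$ and $P_{T,P_0,\xi}\geq P_0$ holds unconditionally, so the ``if'' direction of the lemma would simply be false for a non-halting input.

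The missing idea is that $\hat{x}$ is a parameter of the entire encoding which is chosen \emph{after} $P_0$ is fixed. The paper's proof invokes precisely this freedom: choose $\hat{x}$ (e.g.\ of depth exceeding that of every formula of $P_0$) so that no formula of $P_0$ is a substitution instance of $x\circ y$ or of $(x\circ y)\to z$. Since every formula of $P_{T,P_0}^*\cup T_\xi^*$ \emph{is} a substitution instance of one of these two schemas (every alphabetic formula has the form $E\circ F$, and every axiom in $\mathrm{T}$, $\mathrm{R}$, $\mathrm{H}$ is an implication whose antecedent has that form), Lemma~\ref{L:FormOfDerivations} with $N_\xi=\infty$ then yields $P_0\not\subseteq[P_{T,P_0,\xi}]$, hence $P_{T,P_0,\xi}\not\geq P_0$. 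With that one adjustment your outline becomes the paper's proof; without it the converse cannot be completed, because the statement is only true when the encoding is tuned to $P_0$ in this way.
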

\begin{proof}
By definition, if the tag system $T$ halts on an input word $\xi \in \mathcal{A}^+$, then $\xi \stackrel{T}{\Longmapsto} \zeta$ for some word $\zeta \in \mathcal{A}^+$ such that $|\zeta| < d$. Since
\begin{equation*}
  P_T \vdash \overline{\xi} \Rightarrow \overline{\zeta}
\end{equation*}
by Corollary~\ref{C:Derivability}, the code $\overline{\zeta}$ of $\zeta$ is derivable from $P_T$ and $\overline{\xi}$. If we recall that $P_{T,P_0}$ contains the axioms $\overline{\zeta} \to A$ for each $A \in P_0$, we obtain that $P_{T,P_0,\xi} \vdash A$ and so $P_{T,P_0,\xi} \geq P_0$.

Conversely, let $P_{T,P_0,\xi} \geq P_0$. Recall that the formula $x \circ y$ is built up with using a fixed formula $\hat{x}$ as follows:
\begin{equation*}
  x \circ y = ( ( \hat{y} \to \hat{y} ) \to \hat{y} ) \to ( \hat{x} \to ( ( \hat{y} \to \hat{y} ) \to \hat{y} ) ).
\end{equation*}
Since $\hat{x}$ is an arbitrary one-variable $\{\to\}$-formula, we may assume that every formula in $P_0$ is not a substitution instance of $x \circ y$ or $x \circ y \to z$. On the other hand, all formulas having derivations in $P_{T,P_0,\xi}$ of a height less or equal $N_{\xi}$ is a substitution instances of $x \circ y$ or $x \circ y \to z$ by Lemma~\ref{L:FormOfDerivations}. Hence, if $T$ does not halt on input $\xi$, then $N_{\xi} = \infty$ and, therefore, $P_{T,P_0,\xi} \ngeq P_0$. This contradiction completes the proof.
\end{proof}

\section{The proof of Theorem~\ref{T:main}}

\subsection{Undecidability of recognizing extensions}

If \textbf{Ext} is decidable for a $\Sigma$-calculus $P_0$, then the following problem is decidable: given a tag system $T$ and a word $\xi \in \mathcal{A}$, determine whether $P_0 \leq P_{T,P_0,\xi}$. By Lemma~\ref{L:HaltingCondition}, the decidability of the last problem for the calculus $P_0$ is equivalent to the decidability of the halting problem for the tag system $T$. Since the halting problem of tag systems is undecidable by Theorem~\ref{T:Minsky}, this contradiction completes the proof of undecidability of recognizing extensions.

\subsection{Undecidability of recognizing completeness}

If $\{x \to (y \to x)\} \leq P_0$, then $P_{T,P_0,\xi} \leq P_0$ by Lemmas~\ref{L:DerivabilityOfAFormulas} and~\ref{L:Inclusion}. Hence the problem of recognizing completeness of $P_0$ reduces to the problem of recognizing extensions of $P_0$, which is undecidable. This completes the proof of the theorem.

\section{Conclusion and further research}

In this paper, we established the undecidability of the problem of recognizing extensions for all propositional calculus, and the undecidability of the problem of recognizing completeness, as well as axiomatizations, for all propositional calculus whose theorems contain the formula $x \to (y \to x)$. These results were obtained for the signatures containing the symbol of implication $\to$. It is easily shown that the proofs remain valid, with minor changes, if we consider a signature, which does not contain the symbol~$\to$, but there is some propositional formula having $x, y$ as sole variables, whose truth-table interpretation is ``$x$ implies $y$''.

The other observation is that we can redefine encoding of letters and words by using the formula $x \to (F \to x)$ instead of the formula $x \to (y \to x)$, where $F$ is an arbitrary formula not containing the variable $x$. If we replace the key formula $x \circ y$ with the following formula
\begin{equation*}
  ( ( \hat{y} \to \hat{y} ) \to \hat{y} ) \to ( \hat{F}[x] \to ( ( \hat{y} \to \hat{y} ) \to \hat{y} ) ),
\end{equation*}
where $\hat{F}[x]$ is the substitution instance of $F$ by replacing all occurrences of variables with a fixed one-variable formula $\hat{x}$, we obtain the following interesting generalization of Theorem~\ref{T:main}.

\begin{theorem} \label{T:Conclusion}
Fix a propositional formula $F$ not containing the variable $x$ and a propositional calculus $P_0 \geq \{x \to (F \to x)\}$, then \textbf{Axm} and \textbf{Cmpl} are undecidable for $P_0$.
\end{theorem}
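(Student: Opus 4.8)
The plan is to re-run the whole construction and all the lemmas of Sections~3 and~4 essentially verbatim, changing only the two-variable shortcut: from now on set
\begin{equation*}
  x \circ y := ( ( \hat{y} \to \hat{y} ) \to \hat{y} ) \to ( \hat{F}[x] \to ( ( \hat{y} \to \hat{y} ) \to \hat{y} ) ),
\end{equation*}
where $\hat F[x]$ is obtained from $F$ by replacing each variable of $F$ by the fixed one-variable formula $\hat x$, so that $\hat F[x]$ is a $\{\to\}$-formula in the single variable $x$. Everything built on top of $\circ$ — the code $\overline{a_i}$ of a letter, the shortcut $x \cdot y$, alphabetic formulas, the codes $\overline{\alpha}$ of words, the calculus $P_T$ with its axiom groups $\mathrm T_1,\mathrm T_2,\mathrm R_1,\dots,\mathrm R_4$, the halting block $\mathrm H$, and the calculi $P_{T,P_0}$, $P_{T,P_0,\xi}$ — is kept literally, now read over the new $\circ$. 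One then checks that each lemma of Section~3 survives, and re-derives the analogue of Lemma~\ref{L:HaltingCondition}.

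Two families of statements need re-inspection. First, the derivability facts: the new $x \circ y$ still has the shape $A \to (B \to A)$ with $A = (\hat y \to \hat y) \to \hat y$ and $B = \hat F[x]$, and $B$ is a substitution instance of $F$; since $F$ does not contain $x$, this is exactly a substitution instance of $x \to (F \to x)$. Hence every $\mathcal A$-formula, being of the form $G \circ H$, is a substitution instance of $x \to (F \to x)$, so the analogues of Lemmas~\ref{L:DerivabilityOfCodeOfLetter} and~\ref{L:DerivabilityOfAFormulas} now read $\{x \to (F \to x)\} \vdash \overline{\alpha}$. Second, the non-unifiability facts — Lemma~\ref{L:NonUnifiableFormula} and the elementary clashes used in Lemma~\ref{L:AlphabeticFormulas}. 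These persist: in any attempted unification of the new $x \circ y$ with $(x \circ y) \to z$, the antecedent $(\hat y \to \hat y) \to \hat y$ of $x \circ y$ would have to be identified with a substitution instance of all of $x \circ y$, and the repetition of the block $(\hat y \to \hat y) \to \hat y$ inside $x \circ y$ forces a formula to coincide with a proper subformula of one of its own instances — impossible. This argument touches only the $(\hat y \to \hat y) \to \hat y$ scaffolding and is completely insensitive to what occupies the middle slot, so replacing $\hat x$ by $\hat F[x]$ changes nothing; the clashes between the $C_i$'s, and between $C_i$ and $(x \to x) \to x$, are literally unchanged. Given these, the proofs of Lemmas~\ref{L:AlphabeticFormulas} through Lemma~\ref{L:HaltingCondition} go through without alteration, since they only rearrange the bracketing of $\cdot$ via $\mathrm R_1,\dots,\mathrm R_4$, apply $\mathrm T_1,\mathrm T_2,\mathrm H$, and otherwise invoke exactly the above non-unifiabilities together with the fact that $\mathcal A$-formulas are substitution instances of the base axiom.

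The step I expect to be the main obstacle is the analogue of Lemma~\ref{L:Inclusion}, namely $P_T \le \{x \to (F \to x)\}$ — and, for the completeness reduction, $P_{T,P_0,\xi} \le P_0$ for every $P_0 \ge \{x \to (F \to x)\}$. In the base case this was immediate because $\{x \to (y \to x)\}$ proves $W \to Z$ for an \emph{arbitrary} antecedent $W$ whenever $Z$ is a theorem, so it was enough that every axiom of $P_T$ (and each $\mathrm H$-axiom $\overline{\alpha}\to A$) have a consequent of the form $X \to (Y \to X)$. The single axiom $x \to (F \to x)$ only lets one prepend substitution instances of $F$ to its theorems, so this shortcut is no longer automatic, and one must instead verify, unwinding $x \cdot y$, $\mathrm T_1$, $\mathrm T_2$, $\mathrm R_1,\dots,\mathrm R_4$ and the $\mathrm H$-axioms over the new $\circ$, that each is a theorem of the single axiom $x \to (F \to x)$; here the freedom in the choice of the one-variable formula $\hat x$ and the precise nested shape $(\hat y \to \hat y) \to \hat y \to (\hat F[x] \to (\hat y \to \hat y) \to \hat y)$ of $x\circ y$ are exactly what must be exploited, after rewriting each axiom so that its outermost antecedent is displayed as a substitution instance of $F$.

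Once these inclusions are in place the argument closes as before: the analogue of Lemma~\ref{L:HaltingCondition} states that for a fixed $P_0 \ge \{x \to (F \to x)\}$ the tag system $T$ halts on $\xi$ iff $P_{T,P_0,\xi} \ge P_0$; combining this with $P_{T,P_0,\xi} \le P_0$, halting is equivalent both to $P_{T,P_0,\xi} \sim P_0$ and to completeness of $P_{T,P_0,\xi}$ relative to $P_0$. Hence a decision procedure for \textbf{Axm} or for \textbf{Cmpl} for $P_0$ would decide the halting problem of an arbitrary tag system, contradicting Theorem~\ref{T:Minsky}, which proves Theorem~\ref{T:Conclusion}.
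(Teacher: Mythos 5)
The paper offers no proof of Theorem~\ref{T:Conclusion} beyond the remark that precedes it (the proof is explicitly ``left to the reader''), and your proposal follows that sketch faithfully: you are right that the unification clashes (Lemma~\ref{L:NonUnifiableFormula} and the cases of Lemma~\ref{L:AlphabeticFormulas}) depend only on the $(\hat y \to \hat y)\to\hat y$ scaffolding and survive the replacement of $\hat x$ by $\hat F[x]$, and that every code is a substitution instance of $x\to(F\to x)$. The genuine gap is exactly the step you flag as ``the main obstacle'' and then leave open: the inclusion $P_{T,P_0,\xi}\le P_0$. This is not a routine verification, and the strategy you propose for it --- rewriting each axiom so that its outermost antecedent is displayed as a substitution instance of $F$ --- provably cannot succeed for all admissible $F$. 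One checks that $[\{x\to(F\to x)\}]$ is exactly the set of formulas $F_1\to(F_2\to\cdots(F_k\to A))$ with each $F_i\in F^*$ and $A\in(x\to(F\to x))^*$; hence an axiom $C\to D$ of $P_T\cup\mathrm{H}$ whose consequent is not literally of the form $F'\to C$ is derivable from $\{x\to(F\to x)\}$ only if $C\in F^*$. Now take $F=y\to y$. Every antecedent occurring in $(\mathrm{T}_1)$, $(\mathrm{T}_2)$, $(\mathrm{R}_1)$--$(\mathrm{R}_4)$ and $(\mathrm{H})$ is $\circ$-shaped, i.e.\ of the form $V\to(\hat F[G]\to V)$ with $V=(\hat H\to\hat H)\to\hat H$, and such a formula lies in $(y\to y)^*$ only if $V=\hat F[G]\to V$, which is impossible for every choice of $\hat x$; and these axioms are not themselves instances of $x\to((y\to y)\to x)$, since their consequents' antecedents $(\hat H\to\hat H)\to\hat H$ are never of the form $S\to S$. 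So $P_{T,P_0,\xi}\not\le P_0$ already for $P_0=\{x\to((y\to y)\to x)\}$: the \textbf{Cmpl} reduction produces illegal instances, and the \textbf{Axm} reduction produces calculi that are never equivalent to $P_0$.

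Moreover the obstruction is structural, not a matter of tuning $\hat x$ or the bracketing inside $\circ$: for $F=y\to y$ no theorem of $\{x\to(F\to x)\}$ belongs to $F^*$ at all, so no family of production axioms $C\to D$ whose antecedents must unify with codes that are themselves theorems of $P_0$ (as they must be, since $\overline{\xi}$ is added as an axiom of a calculus required to satisfy $P\le P_0$) can ever be fired by modus ponens inside $[P_0]$, except via instances of the base axiom itself. As it stands, your argument (like the paper's remark) establishes the theorem only for those $F$ for which every $\circ$-shaped formula is a substitution instance of $F$ --- e.g.\ $F$ a variable, which recovers Theorem~\ref{T:main} --- and closing the gap for general $F$ (in particular for $F$ with repeated variables) requires a genuinely new encoding, not the substitution $\hat x\mapsto\hat F[x]$ alone.
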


\noindent We leave the proof to the reader.

A natural and interesting question arises with respect to this generalization: there is an enumerable set of propositional formulas $M$ for which the condition $[P_0] \cap M \neq \emptyset$ holds if and only if \textbf{Axm} and \textbf{Cmpl} are undecidable for $P_0$. Since Gladstone in~\cite{Gladstone:79:DOPC} proved that \textbf{Drv} is decidable for every one-variable propositional calculus, it seems to be interesting to consider only formulas containing two or more variables. Theorem~\ref{T:Conclusion} shows that two-variables formulas are sufficient.

\section{Acknowledgement}

The author is grateful to Karel Chvalovsk\'{y} for discussion of undecidable problems of propositional calculi investigated by Marcinkowski and useful comments that improved the manuscript.


\end{document}